\documentclass[12pt]{article} 
\usepackage{amsfonts,amsmath,latexsym,amssymb,mathrsfs,amsthm,comment,setspace}

\evensidemargin0cm
\oddsidemargin0cm
\textwidth16cm
\textheight22.8cm
\topmargin-1.7cm  



\let\OLDthebibliography\thebibliography
\renewcommand\thebibliography[1]{
	\OLDthebibliography{#1}
	\setlength{\parskip}{0pt}
	\setlength{\itemsep}{0pt plus 0.0ex}
}


%

\def\numberlikeadb{\global\def\theequation{\thesection.\arabic{equation}}}
\numberlikeadb
\newtheorem{theorem}{Theorem}[section]
\newtheorem{lemma}[theorem]{Lemma}
\newtheorem{corollary}[theorem]{Corollary}

\newtheorem{remark}[theorem]{Remark}
\newtheorem{example}[theorem]{Example}

\usepackage{color} 
\newcommand{\gr}[1]{{\color{blue} #1}}
\allowdisplaybreaks
\usepackage{lscape}
\usepackage{caption}
\usepackage{multirow}
\begin{document}

	\title{
	Bounds for distributional approximation in the multivariate delta method by Stein's method}
	\author{Robert E. Gaunt\footnote{Department of Mathematics, The University of Manchester, Oxford Road, Manchester M13 9PL, UK, robert.gaunt@manchester.ac.uk; heather.sutcliffe@manchester.ac.uk}\:\, and Heather Sutcliff$\mathrm{e}^{*}$}

	\date{} 
	\maketitle
	
	\vspace{-9mm}
	
	
\begin{abstract} We obtain bounds to quantify the distributional approximation in the delta method for vector statistics (the sample mean of $n$ independent random vectors) for normal and non-normal limits, measured using smooth test functions. For normal limits, we obtain bounds of the optimal order $n^{-1/2}$ rate of convergence, but for a wide class of non-normal limits, which includes quadratic forms amongst others, we achieve bounds with a faster order $n^{-1}$ convergence rate. We apply our general bounds to derive explicit bounds to quantify distributional approximations of an estimator for Bernoulli variance, several statistics of sample moments, order $n^{-1}$ bounds for the chi-square approximation of a family of rank-based statistics, and we also provide an efficient independent derivation of an order $n^{-1}$ bound for the chi-square approximation of Pearson's statistic. In establishing our general results, we generalise recent results on Stein's method for functions of multivariate normal random vectors to vector-valued functions and sums of independent random vectors whose components may be dependent. These bounds are widely applicable and are of independent interest. 
\end{abstract}

	\noindent{{\bf{Keywords:}}} Delta method; distributional approximation; rate of convergence; Stein's method; functions of multivariate normal random vectors 
	
	\noindent{{{\bf{AMS 2010 Subject Classification:}}} Primary 60F05; 62E17

\section{Introduction}

\subsection{The delta method and discussion of main results}

Let $\mathbf{X}_1,\ldots,\mathbf{X}_n$ be independent random vectors with mean vector $\mathbf{0}$. Let $\overline{\mathbf{X}}=n^{-1}\sum_{i=1}^n\mathbf{X}_i$ denote the sample mean. Suppose that the covariance matrix $n^{-1}\Sigma\in\mathbb{R}^{d \times d}$ of $\overline{\mathbf{X}}$ is non-negative definite. Assume that the first $t+1$ partial derivatives of the function $\mathbf{f}:\mathbb{R}^d\rightarrow\mathbb{R}^m$ exist and are such that
 $\frac{\partial^{t}\mathbf{f}}{\prod_{j=1}^{t}\partial w_{i_j}}(\mathbf{0})\neq\mathbf{0}$  for some $1\leq i_1,\ldots,i_t\leq d$, with (in the case $t\geq2$) all lower order partial derivatives of $\mathbf{f}$ evaluated at the mean $\mathbf{0}$ being equal to the zero vector $\mathbf{0}$. Define $\mathbf{T}_{t,n}=n^{t/2}(\mathbf{f}(\overline{\mathbf{X}})-\mathbf{f}(\mathbf{0}))$. Then, roughly stated, the delta method asserts that, for fixed $t\geq1$, the statistic $\mathbf{T}_{t,n}$ converges in distribution to the random vector
\begin{equation}\label{limity}
	\mathbf{Y}_t=\frac{1}{t!}\sum_{i_1,\ldots,i_t=1}^{d}\frac{\partial^t\mathbf{f}}{\prod_{j=1}^{t}\partial w_{i_j}}(\mathbf{0})Z_{i_1}Z_{i_2}\cdots Z_{i_t},
\end{equation}
where  $(Z_1,\ldots,Z_d)^\intercal\sim \mathrm{N}_d(\mathbf{0},\Sigma)$, the $d$-dimensional multivariate normal distribution with mean vector $\mathbf{0}$ and covariance matrix $\Sigma$.
In the case $t=1$, the limit distribution $\mathbf{Y}_1$ is a multivariate normal random vector. We assume that $\mathbb{E}[\overline{\mathbf{X}}]=\mathbf{0}$ for a simpler exposition. Results for the general case $\mathbb{E}[\overline{\mathbf{X}}]=\boldsymbol{\mu}$ are readily obtained by applying a simple translation; see the examples in Section \ref{sec3} for an illustration.

The delta method is one of the most basic and widely used results in the asymptotic theory of mathematical statistics. A statement of the delta method for distributional approximation dates as far back as a work of \cite{d35} in 1935; we refer the reader to \cite{p13,v12} for an interesting historical discussion of the origins of the method. Proofs, detailed accounts and applications of the delta method can be found in, amongst many others, the books and lecture notes \cite{bfh75,h14,s10}, and a treatment of the
delta method applied to infinite-dimensional random vectors is given by \cite{k08,r06}.

Given the importance of the delta method in mathematical statistics, it is perhaps surprising that
the natural problem of seeking error bounds to quantify the quality of the distributional approximation has received surprisingly little attention in the literature. Recently, however, \cite{pm16} (see also the arXiv version \cite{pm16b} for further results) produced a substantial work in which optimal order $n^{-1/2}$ uniform and non-uniform Berry-Esseen bounds were given in the multivariate delta method (with $f:\mathbb{R}^d\rightarrow\mathbb{R}$) in the important case of normal approximation ($t=1$). Even when specialised to the univariate delta method their results were new. Berry-Esseen bounds in the high-dimensional delta method are also given by \cite{wkr14}. The bounds of \cite{pm16} were used by \cite{p17} to establish optimal order $n^{-1/2}$ uniform and non-uniform Berry-Esseen bounds for the normal approximation of the maximum likelihood estimator, whilst \cite{ag20,al17} used the delta method and Stein's method \cite{stein} to obtain $O(n^{-1/2})$ bounded Wasserstein distance bounds for the normal approximation of maximum likelihood estimators that can be expressed as a function of a sum of independent random variables. Recently, \cite{gaunt normal} used Stein's method to obtain bounds for the distributional approximation in the univariate delta method for the cases $t=1,2$, measured using smooth test functions. Notably, \cite{gaunt normal} obtained a bound of order $n^{-1}$ for the case $t=2$ under the assumption that $f$ is an even function (that is $f(x)=f(-x)$ for all $x\in\mathbb{R}$) with derivatives up to fourth order having polynomial growth, in which the limit distribution is a chi-square distribution with one degree of freedom.

In this paper, we obtain general bounds for the distributional approximation in the multivariate delta method, measured using smooth test functions. We obtain bounds on the quantity $|\mathbb{E}[h(\mathbf{T}_{t,n})]-\mathbb{E}[h(\mathbf{Y}_t)]|$, where $h:\mathbb{R}^m\rightarrow\mathbb{R}$ is a real-valued, suitably smooth test function. Bounds on the distance between the distributions of $\mathbf{T}_{t,n}$ and $\mathbf{Y}_t$ with respect to integral probability metrics \cite{gs02,z83} readily follow; see Remark \ref{rem2.2} for further details. Our bounds apply for vector-valued functions $\mathbf{f}:\mathbb{R}^d\rightarrow\mathbb{R}^m$ that are sufficiently differentiable with derivatives having polynomial growth; the sample mean $\overline{\mathbf{X}}$ is a sum of independent, but not necessarily identically distributed, $d$-dimensional random vectors; and the bounds hold for all $t\geq1$. We obtain order $n^{-1/2}$ bounds in this general setting (see Theorem \ref{thm2.1}), as well as bounds with a faster $O(n^{-1})$ convergence rate under certain additional assumptions. These faster convergence rates occur if $t\geq2$ is even and either $\mathbf{f}$ is an even function (that is $\mathbf{f}(\mathbf{x})=\mathbf{f}(-\mathbf{x})$ for all $\mathbf{x}\in\mathbb{R}^d$) or $\mathbb{E}[X_{ij}X_{ik}X_{il}]=0$ for all $1\leq i\leq n$, $1\leq j,k,l\leq d$ (where $X_{ij}$ is the $j$-th component of the random vector $\mathbf{X}_i$), as well as stronger differentiability conditions on $\mathbf{f}$ and stronger moment conditions on the $X_{ij}$ (see Theorems \ref{thm2.2} and \ref{thm2.3}).  The identification of these simple sufficient conditions for faster than $O(n^{-1/2})$ convergence rates in the delta method is one of the main contributions of this paper.  

The work of \cite{pm16} is significant in that the bounds are given in the Kolmogorov metric,
which is  important in statistics, as bounds in this metric can be used, for example, to construct conservative confidence intervals. However, in this paper we reap several benefits by working with smooth test functions. Firstly, the premise of smooth test functions is crucial for our purpose of obtaining faster than order $n^{-1/2}$ convergence rates; see Remark \ref{rem2.1}. Secondly, we can appeal to the powerful theory of Stein's method for functions of multivariate normal random vectors to treat a wide class of limit distributions beyond the normal distribution. Thirdly, as noted by \cite[p.\ 151]{bh85}, bounds for smooth functions may be more natural in theoretical settings.
Our $O(n^{-1/2})$ bounds for the univariate delta method for general $t\geq1$ immediately induce $O(n^{-1/2})$ Wasserstein distance bounds, a natural and widely used probability metric with many applications in statistics (see \cite{pz19}). Moreover, from our smooth test function bounds we can use general results of \cite{gl22} to extract (typically sub-optimal order) Kolmogorov distance bounds for the multivariate delta method; see Remark \ref{rem2.2} for further details. To the best of our knowledge, these are the first Kolmogorov distance bounds for the multivariate delta method that hold for general $t\geq1$.  

In addition to complementing the work of \cite{pm16}, our results  significantly generalise those of \cite{gaunt normal} on the delta method. When specialised to the case $t=2$ and $m=1$, the limit distribution is a chi-square distribution. Our bounds therefore also complement recent works of \cite{gaunt pd, gaunt chi square, gr21} in which Stein's method is used to obtain optimal order $n^{-1}$ bounds for the chi-square approximation of the Pearson, power divergence and Friedman statistics, as well as the works of \cite{asylbekov,gu03,pu21,ulyanov} in which Edgeworth expansions are used to obtain order $n^{-1}$ Berry-Esseen bounds, without explicit constants, for the $\chi_{(d)}^2$ approximation of the Pearson, power divergence and Friedman statistics for sufficiently large $d$. It is noteworthy that a simple application of our general bounds yields independent derivations of explicit $O(n^{-1})$ bounds for the chi-square approximation of the Friedman (as a special case of a result for more general rank-based statistics introduced by \cite{sen}) and Pearson statistics (see Examples \ref{ex3.5} and \ref{ex3.6}).
We also note that Stein's method has been used to obtain error bounds
for the multivariate normal approximation of vectors of quadratic forms \cite{c08,de15}.

\subsection{Discussion of methods and outline of the paper}

We will prove our general bounds for the delta method by generalising recent results of \cite{gaunt normal,GauntSut} on Stein's method for functions of multivariate normal random vectors. Here one is interested in bounding the distance between the distributions of $g(\mathbf{W}_n)$ and $g(\mathbf{Z})$, where $g:\mathbb{R}^d\rightarrow\mathbb{R}$ is a continuous function and $\mathbf{W}_n$ converges in distribution, as $n\rightarrow\infty$, to a multivariate normal random vector $\mathbf{Z}$. From now on, to simplfiy notation, we suppress the dependence of $n$ in $\mathbf{W}_n$. In \cite{gaunt normal,GauntSut}, general bounds on the quantity $|\mathbb{E}[h(g(\mathbf{W}))]-\mathbb{E}[h(g(\mathbf{Z}))]|$ were obtained in the case that $g$ has partial derivatives with polynomial growth and $\mathbf{W}=(W_1,\ldots,W_d)^\intercal$ with $W_j=n^{-1/2}\sum_{i=1}^nX_{ij}$ for independent $X_{ij}$, $i=1,\ldots,n$, $j=1,\ldots,d$. The general bounds of \cite{gaunt normal,GauntSut} yield $O(n^{-1})$ convergence rates if $g$ is an even function (that is $g(\mathbf{w})=g(-\mathbf{w})$ for all $\mathbf{w}\in\mathbb{R}^d$) or if $\mathbb{E}[X_{ij}^3]=0$ for all $i,j$, in addition to the usual first and second moment assumptions. Without these additional assumptions, one obtains bounds of order $n^{-1/2}$.

In this paper, we generalise the results of \cite{gaunt normal,GauntSut} to vector-valued functions $\mathbf{g}:\mathbb{R}^d\rightarrow\mathbb{R}^m$ and also to the case that, for fixed $i$, the $X_{ij}$ may be dependent. The latter generalisation allows us to derive bounds for the delta method for general sample means comprised of sums of independent random vectors, whose components may be dependent; the significance of this generalisation is demonstrated with our application to the rank-based statistics and Pearson's statistic. 
These bounds are themselves widely applicable and of independent interest. Indeed, Stein's method for functions of multivariate normal random vectors has already been used to derive bounds for the chi-square approximation of the likelihood ratio \cite{ar20},  power divergence \cite{gaunt pd} and Friedman statistics \cite{gr21}, as well as multivariate normal approximation of the posterior in exponential family models \cite{fgrs22} and the maximum likelihood estimator \cite{ag20}, and the generalisations established in this paper further broaden the scope of the theory. 
With these bounds at hand, we can bound the quantity of interest $|\mathbb{E}[h(\mathbf{T}_{t,n})]-\mathbb{E}[h(\mathbf{Y}_t)]|$ by applying the triangle inequality to reduce the problem to bounding an expression of the form $|\mathbb{E}[h(\mathbf{g}(\mathbf{W}))]-\mathbb{E}[h(\mathbf{g}(\mathbf{Z}))]|$ and another remainder term that only involves the limit random variable $\mathbf{Y}_t$ which can be bounded by elementary calculations.

The rest of this paper is organised as follows. In Section \ref{sec2.1}, we present our general bounds for the distributional approximation in the delta method together with several remarks discussing the bounds. In Section \ref{sec2.2}, we consider a noteworthy special case in which the $(t+1)$-th order partial derivatives of $\mathbf{f}$ are zero. These bounds generalise recent results of \cite{gaunt normal,GauntSut} on Stein's method for functions of multivariate normal random vectors to vector-valued functions and sums of independent random vectors whose components may be dependent.  In Section \ref{sec3}, we provide several illustrative examples of the application of our general bounds. Our examples concern distributional approximations of an estimator for Bernoulli variance, several statistics of sample moments, and order $n^{-1}$ bounds for the chi-square approximation of a family of rank-based statistics and Pearson's statistic.
 In Section \ref{sec4}, we prove the theorems from Section \ref{sec2.2}; along the way we obtain Lemma \ref{bell lem} which, as discussed in Remark \ref{remm1}, is of particular interest, as it allows bounds the bounds of \cite{gaunt normal,GauntSut} on the derivatives of the solution of the multivariate normal Stein equation with test function $h(g(\cdot))$ to be immediately generalised to vector-valued functions $\mathbf{g}:\mathbb{R}^d\rightarrow\mathbb{R}^m$.   
In Section \ref{sec5}, we leverage the results of Section \ref{sec2.2} to prove the general bounds of Section \ref{sec2.1}. In the Supplementary Material (SM), we perform a computation from Example \ref{ex3.5}, prove Lemma \ref{cbwbshc}, and provide detailed calculations to support claims made in the proofs of Theorems \ref{thm2.1}--\ref{thm2.2univ}.

\vspace{2mm}

\noindent{\emph{Notation.}} The class $C^t_b(I)$ consists of all functions $h:I\subset\mathbb{R}\rightarrow\mathbb{R}$ for which $h^{(t-1)}$ exists and is absolutely continuous and has bounded derivatives up to the $t$-th order. For a given function $P:\mathbb{R}^d\rightarrow \mathbb{R}^+$, the class $C_{P}^{t,m}(\mathbb{R}^d)$ consists of all functions $\mathbf{g}:\mathbb{R}^d\rightarrow\mathbb{R}^m$ such that all $t$-th order partial derivatives of the $m$ components of $\mathbf{g}=(g_1,\ldots,g_m)^\intercal$ exist and are such that, for all $\mathbf{w}\in\mathbb{R}^d$,
\begin{equation*}\bigg|\frac{\partial^kg_l(\mathbf{w})}{\prod_{j=1}^k\partial w_{i_j}}\bigg|^{t/k}\leq P(\mathbf{w}), \quad k=1,\ldots,t, \: l=1,\ldots,m.
\end{equation*}
For given functions $P=(P_0,P_1,\ldots,P_n)$, 
the class $\mathcal{C}_P^{t,n,m}(\mathbb{R}^d)$ consists of all functions $\mathbf{f}:\mathbb{R}^d\rightarrow\mathbb{R}^m$ whose $(t+n)$-th order partial derivatives of each of the components $f_1,\ldots,f_m$ exist, and are such that, for all $\mathbf{w}\in\mathbb{R}^d$, 
\begin{equation*}\bigg|\frac{\partial^{t+k}f_l(\mathbf{w})}{\prod_{j=1}^{t+k}\partial w_{i_j}}\bigg|\leq P_k(\mathbf{w}), \quad k=0,1,\ldots,n, \: l=1,\ldots,m.
\end{equation*}
 We also denote $h_{p,m}=\sum_{k=1}^pm^k{p\brace k}|h|_k$, where $|h|_k=\sup_{i_1,\ldots,i_k}\big\|\frac{\partial^kh}{\prod_{j=1}^k\partial w_{i_j}}\big\|$, and ${n\brace k}=(1/k!)\sum_{j=0}^k(-1)^{k-j}\binom{k}{j}j^n$ is a Stirling number of the second kind (see \cite{olver}). We let $h_{p}:=h_{p,1}$. We will sometimes use the notation $\partial_x=\frac{\partial}{\partial x}$, $\partial_{xy}=\frac{\partial^2}{\partial x\partial y}$ and $\partial_x^2=\frac{\partial}{\partial x^2}$ for lower-order partial derivatives. For a function $h$ and random elements $X$ and $Y$, we let $\Delta_h(X,Y)$ denote the quantity $|\mathbb{E}[h(X)]-\mathbb{E}[h(Y)]|$. We denote that the $r$-th absolute moment of the $\mathrm{N}(0,\sigma^2)$ distribution by $\mu_{r,\sigma}=2^{r/2}\sigma^r\Gamma((r+1)/2)/\sqrt{\pi}$. For $a,b\in\mathbb{R}$, we write $a\vee b:=\max\{a,b\}$. We use the convention that $0^0:=1$.

\section{Main results}\label{sec2}
		
\subsection{General bounds for the delta method}\label{sec2.1}


Let  $\mathbf{X}_1,\ldots, \mathbf{X}_n$ be independent zero mean random vectors, with $\mathbf{X}_{i}=(X_{i,1},\ldots,X_{i,d})^\intercal$, $i=1,\ldots,n$. Denote the sample mean by $\overline{\mathbf{X}}=n^{-1}\sum_{i=1}^n\mathbf{X}_i$. For $j=1,\ldots,d$, let $W_j=n^{-1/2}\sum_{i=1}^nX_{ij}$ and denote $\mathbf{W}=(W_1,\ldots,W_d)^\intercal$. Suppose that the covariance matrix $\Sigma\in\mathbb{R}^{d\times d}$ of $\mathbf{W}$ is non-negative definite (so that the covariance matrix $n^{-1}\Sigma$ of $\overline{\mathbf{X}}$ is also non-negative definite) and let $\mathbf{Z}\sim\mathrm{N}_d(\mathbf{0},\Sigma)$. Let $\sigma_{jk}=(\Sigma)_{jk}$ and set $\sigma_j^2=\sigma_{jj}$.
Assume that the first $t+1$ partial derivatives of the function $\mathbf{f}:\mathbb{R}^d\rightarrow\mathbb{R}^m$ exist and are such that
 $\frac{\partial^{t}\mathbf{f}}{\prod_{j=1}^{t}\partial w_{i_j}}(\mathbf{0})\neq\mathbf{0}$  for some $1\leq i_1,\ldots,i_t\leq d$, with (in the case $t\geq2$) all lower order partial derivatives of $\mathbf{f}$ evaluated at the mean $\mathbf{0}$ being equal to the zero vector $\mathbf{0}$. Recall that we define  $\mathbf{T}_{t,n}=n^{t/2}(\mathbf{f}(\overline{\mathbf{X}})-\mathbf{f}(\mathbf{0}))$, and that the limit random vector $\mathbf{Y}_t$ is defined as in (\ref{limity}). We shall assume that the derivatives of the components of $\mathbf{f}$ have polynomial growth rate. To this end, we shall suppose $\mathbf{f}\in\mathcal{C}_P^{t,n,m}(\mathbb{R}^d)$ with $P=(P_0,P_1,\ldots,P_n)$, where $P_k(\mathbf{w})=A_{t+k}(1+\sum_{i=1}^d|w_i|^{r_t+k})$, $k=0,1,\ldots,n$. Let $a_{n,d,r_t}= d/n^{r_t/2}\vee1$.

\begin{theorem}\label{thm2.1}Let the random vectors $\mathbf{X}_1,\ldots,\mathbf{X}_n$ and the function $\mathbf{f}:\mathbb{R}^d\rightarrow\mathbb{R}^m$ be defined as above. In addition, suppose that $\mathbf{f}\in \mathcal{C}_P^{1,2,m}(\mathbb{R}^d)$ for $t=1$ and $\mathbf{f}\in \mathcal{C}_P^{t,1,m}(\mathbb{R}^d)$ for $t\geq2$. Suppose also that  $\mathbb{E}|X_{ij}|^{u_{1,t}+3}<\infty$ for all $i,j$, where $u_{1,1}=\max\{3r_1,3r_{2}/2,r_{3}\}$, $u_{1,2}=\max\{3(r_2+1),r_3\}$ and $u_{1,t}=3(r_t+t-1)$ for $t\geq3$. Let $C_{1,1}=\max\{4A_1^3,\sqrt{2}A_2^{3/2}/\sqrt{n},A_3/n^{5/6}\}$, $C_{1,2}=\max\{4A_2^3,\sqrt{2}A_2^{3/2},A_3/\sqrt{n}\}$ and $C_{1,t}=\max\{\frac{4A_t^{3}}{((t-1)!)^3}, \frac{\sqrt{2}A_t^{3/2}}{((t-2)!)^{3/2}}, \frac{A_t}{(t-3)!}\}$
for $t\geq3$. 
Suppose $n\geq d^6\vee 8$. 
Then, for $h\in C_b^3(\mathbb{R}^m)$, 
\begin{align}\label{thm2.1bd}
\Delta_h(\mathbf{T}_{t,n},\mathbf{Y}_t)\leq\frac{1}{\sqrt{n}}\big(m|h|_1\mathcal{M}_{1,d}+h_{3,m}\mathcal{M}_{2,d}\big),
\end{align}
where
\begin{align*}
	\mathcal{M}_{1,d}&=\frac{A_{t+1}d^t}{(t+1)!}\sum_{j=1}^{d}\bigg(\mu_{t+1,\sigma_j}
	+\frac{d}{n^{r_{t+1}/2}}\mu_{r_{t+1}+t+1,\sigma_j}\bigg),\\
	\mathcal{M}_{2,d}&=\frac{C_{1,t}a_{n,d,r_t}^3 d^{3t-2}}{n}\sum_{i=1}^n\sum_{j,k=1}^d\big((1+2^{u_{1,t}/2}\mu_{u_{1,t},\sigma_k})\mathbb{E}|X_{ij}|^3+\mathbb{E}|X_{ij}|^{u_{1,t}+3}\nonumber\\
	&\quad+2^{3u_{1,t}/2}\mathbb{E}|X_{ij}|^3\mathbb{E}|W_k|^{u_{1,t}}\big).
	\end{align*}
\end{theorem}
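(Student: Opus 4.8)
The plan is to realise $\mathbf{T}_{t,n}$ as a fixed function of $\mathbf{W}$ and then to compare against the limit through an intermediate Gaussian evaluation. Since $\overline{\mathbf{X}}=n^{-1/2}\mathbf{W}$, setting $\mathbf{g}(\mathbf{w})=n^{t/2}(\mathbf{f}(n^{-1/2}\mathbf{w})-\mathbf{f}(\mathbf{0}))$ gives $\mathbf{T}_{t,n}=\mathbf{g}(\mathbf{W})$. Inserting $\mathbf{g}(\mathbf{Z})$ with $\mathbf{Z}\sim\mathrm{N}_d(\mathbf{0},\Sigma)$ and applying the triangle inequality,
\begin{equation*}
\Delta_h(\mathbf{T}_{t,n},\mathbf{Y}_t)\leq\Delta_h(\mathbf{g}(\mathbf{W}),\mathbf{g}(\mathbf{Z}))+\Delta_h(\mathbf{g}(\mathbf{Z}),\mathbf{Y}_t).
\end{equation*}
The first term is exactly the quantity controlled by our Stein's method bounds for functions of multivariate normal random vectors from Section \ref{sec2.2}, while the second involves only the Gaussian $\mathbf{Z}$ and will be handled by a direct Taylor estimate. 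I expect these two terms to produce the $h_{3,m}\mathcal{M}_{2,d}$ and $m|h|_1\mathcal{M}_{1,d}$ summands, respectively.

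For the remainder term $\Delta_h(\mathbf{g}(\mathbf{Z}),\mathbf{Y}_t)$, I would Taylor expand each component $f_l(n^{-1/2}\mathbf{z})$ about $\mathbf{0}$ to order $t$ with integral remainder. Since, by hypothesis, all partial derivatives of $\mathbf{f}$ of orders $1,\ldots,t-1$ vanish at $\mathbf{0}$, the expansion collapses to its order-$t$ term, which is precisely $Y_{t,l}$, plus an order-$(t+1)$ remainder $R_l$; the remainder carries a factor $n^{-(t+1)/2}$, so $g_l(\mathbf{Z})-Y_{t,l}=n^{t/2}R_l(\mathbf{Z})$ is of net order $n^{-1/2}$. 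Using the elementary estimate $|h(\mathbf{u})-h(\mathbf{v})|\leq|h|_1\sum_{l=1}^m|u_l-v_l|$ and summing over the $m$ components,
\begin{equation*}
\Delta_h(\mathbf{g}(\mathbf{Z}),\mathbf{Y}_t)\leq|h|_1\sum_{l=1}^m\mathbb{E}|g_l(\mathbf{Z})-Y_{t,l}|=|h|_1\sum_{l=1}^m n^{t/2}\,\mathbb{E}|R_l(\mathbf{Z})|.
\end{equation*}
Bounding the $(t+1)$-th derivatives of $f_l$ by $P_1(\mathbf{w})=A_{t+1}(1+\sum_{i=1}^d|w_i|^{r_{t+1}})$, evaluating the resulting Gaussian expectations of products of the $|Z_{i_j}|$, and collapsing the multi-index sums by the power-mean inequality $(\sum_{i=1}^d a_i)^{t+1}\leq d^t\sum_{i=1}^d a_i^{t+1}$ should produce the absolute moments $\mu_{t+1,\sigma_j}$ and $\mu_{r_{t+1}+t+1,\sigma_j}$ together with the factor $d^t$, yielding exactly $n^{-1/2}m|h|_1\mathcal{M}_{1,d}$.

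For the Stein term $\Delta_h(\mathbf{g}(\mathbf{W}),\mathbf{g}(\mathbf{Z}))$, I would verify that $\mathbf{g}\in C_P^{t,m}(\mathbb{R}^d)$ for an explicit polynomial growth function $P$ and then quote the general bound of Section \ref{sec2.2}, which via Lemma \ref{bell lem} applies to vector-valued $\mathbf{g}$ and to sums whose components $X_{ij}$ may be dependent. The delicate point is that $\mathcal{C}_P^{t,n,m}$ controls only the derivatives of $\mathbf{f}$ of order $\geq t$, whereas membership of $C_P^{t,m}$ requires bounds on $\partial^k g_l$ for every $k=1,\ldots,t$; for $k<t$ I would again Taylor expand $\partial^k f_l$ about $\mathbf{0}$, using that its derivatives up to order $t-1-k$ vanish while its $(t-k)$-th derivative is bounded by $P_0$, to obtain $|\partial^k g_l(\mathbf{w})|\leq\frac{A_t}{(t-k)!}(1+n^{-r_t/2}\sum_i|w_i|^{r_t})(\sum_i|w_i|)^{t-k}$, the scalings $n^{(t-k)/2}$ and $n^{-(t-k)/2}$ cancelling. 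The structural payoff of the normalisation $|\partial^k g_l|^{t/k}\leq P$ built into $C_P^{t,m}$ is that, for $t\geq3$, every third-order Fa\`a di Bruno product of derivatives of $\mathbf{g}$ (namely $(\partial g)^3$, $(\partial g)(\partial^2 g)$ and $\partial^3 g$) is uniformly bounded by $P^{3/t}$; this reduces the relevant moment order to $u_{1,t}=3(r_t+t-1)$ and yields the three competing constants in $C_{1,t}$ as the coefficients of $P^{3/t}$ at $k=1,2,3$. For $t=1,2$ the third-order chain rule needs derivatives of $\mathbf{g}$ of orders up to $3>t$, lying outside $C_P^{t,m}$, so these are controlled directly by the individual growth bounds $P_0,P_1$ (and $P_2$ when $t=1$), which is the source of the more intricate exponents $u_{1,1},u_{1,2}$ and of the constants $C_{1,1},C_{1,2}$ mixing $A_1,A_2,A_3$. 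Feeding $P$ into the Section \ref{sec2.2} bound and simplifying, using $n\geq d^6\vee8$ and the definition $a_{n,d,r_t}=d/n^{r_t/2}\vee1$ to absorb the $n^{-r_t/2}$ factors, would deliver $n^{-1/2}h_{3,m}\mathcal{M}_{2,d}$.

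I expect the main obstacle to lie in this last step: producing a clean explicit $P$ for the composite $\mathbf{g}$ (especially the indirect control of its lower-order derivatives) and then tracking every constant, factorial, and Gaussian and absolute moment through the Section \ref{sec2.2} machinery so that the output matches $\mathcal{M}_{2,d}$ exactly, including the case split between $t=1,2$ and $t\geq3$. The Taylor-remainder term is comparatively routine; the bookkeeping of the Stein term, and verifying that the various $n^{-r_t/2}$ powers are dominated under $n\geq d^6\vee8$, is where the real work resides.
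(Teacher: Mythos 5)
Your proposal follows essentially the same route as the paper's proof: the same splitting $\Delta_h(\mathbf{T}_{t,n},\mathbf{Y}_t)\leq\Delta_h(\mathbf{g}_{t,n}(\mathbf{W}),\mathbf{g}_{t,n}(\mathbf{Z}))+\Delta_h(\mathbf{g}_{t,n}(\mathbf{Z}),\mathbf{Y}_t)$, the same Taylor/mean-value treatment of the Gaussian remainder yielding $m|h|_1\mathcal{M}_{1,d}/\sqrt{n}$, and the same strategy of exhibiting a dominating function so that $\mathbf{g}_{t,n}\in C_P^{3,m}(\mathbb{R}^d)$ (note the required class is $C_P^{3,m}$, with normalisation $|\partial^k g_l|^{3/k}\leq P$, rather than $C_P^{t,m}$ as you wrote) and invoking Theorem \ref{theoremsec21}, including the case split $t\geq3$ versus $t=1,2$ that produces $C_{1,t}$ and $u_{1,t}$. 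The remaining work you flag is exactly the bookkeeping carried out in the paper's Supplementary Material.
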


\begin{theorem}\label{thm2.2}Let $\mathbf{X}_1,\ldots,\mathbf{X}_n$ and $\mathbf{f}:\mathbb{R}^d\rightarrow\mathbb{R}^m$ be defined as above. In addition, suppose $t\geq2$ is even, that $\mathbf{f}$ is an even function (that is $\mathbf{f}(\mathbf{x})=\mathbf{f}(-\mathbf{x})$ for all $\mathbf{x}\in\mathbb{R}^d$), and that $\mathbf{f}\in \mathcal{C}_P^{2,4,m}(\mathbb{R}^d)$ for $t=2$ and $\mathbf{f}\in \mathcal{C}_P^{t,2,m}(\mathbb{R}^d)$ for $t\geq4$. Suppose also that $\mathbb{E}|X_{ij}|^{u_{2,t}+4}<\infty$ for all $i,j$, where $u_{2,2}=\max\{6(r_2+1), 2r_3, 3/2r_4, 6/5r_5, r_6\}$, $u_{2,4}=\max\{6(r_4+3),6r_5/5,r_6\}$ and $u_{2,t}=6(r_t+t-1)$, $t\geq6.$ Let
\begin{align*}{C}_{2,2}&=\max\{32A_2^6, 4A_2^3, \tfrac{4A_3^2}{n}, \tfrac{\sqrt{2}A_4^{3/2}}{n^{3/2}}, \tfrac{2^{1/5}A_5^{6/5}}{n^{9/5}}, \tfrac{A_6}{n^2}\}, \\
{C}_{2,4}&=\max\{\tfrac{A_4^6}{1458},\tfrac{A_4^3}{2},2A_4^2,\sqrt{2}A_4^{3/2},\tfrac{2^{1/5}A_5^{6/5}}{n^{3/5}},\tfrac{A_6}{n}\}, \\
{C}_{2,t}&=\max\big\{\tfrac{32A_t^6}{((t-1)!)^6},\tfrac{4A_t^3}{((t-2)!)^3},\tfrac{2A_t^2}{((t-3)!)^2},\tfrac{\sqrt{2}A_t^{3/2}}{((t-4)!)^{3/2}},\tfrac{2^{1/5}A_t^{6/5}}{((t-5)!)^{6/5}},\tfrac{A_t}{(t-6)!}\big\}, \quad t\geq6.
\end{align*} 
Suppose $n\geq12$.
Then, for $h\in C_b^6(\mathbb{R}^m)$, 
\begin{align}
\Delta_h(\mathbf{T}_{t,n},\mathbf{Y}_t)\leq\frac{1}{n}\big(m|h|_1\mathcal{K}_{1,d}+m^2|h|_2\mathcal{K}_{2,d}+h_{4,m}\mathcal{K}_{3,d}+h_{6,m}\mathcal{K}_{4,d}\big),\label{thm2.2bd}
\end{align}
where
\begin{align*}
	\mathcal{K}_{1,d}&=\frac{A_{t+2}d^{t+1}}{(t+2)!}\sum_{j=1}^{d}\bigg(\mu_{t+2,\sigma_j}+\frac{d}{n^{r_{t+2}/2}}\mu_{r_{t+2}+t+2,\sigma_j}\bigg),\\
	\mathcal{K}_{2,d}&=\frac{d^{2t+1}}{((t+1)!)^2}\sum_{j=1}^{d}\bigg(A_{t+1}^2\mu_{2(t+1),\sigma_j}+\frac{2A_{t+2}^2d^2}{(t+2)^2n}\big(\mu_{2(t+2),\sigma_j}+d^2\mu_{2(r_{t+2}+t+2),\sigma_j}\big)\bigg),\\
\mathcal{K}_{3,d}&=\frac{13{C}_{2,t}a_{n,d,r_t}^6d^{6t-4}}{12n}\sum_{i=1}^n\sum_{j,k=1}^d\big((1+2^{u_{2,t}/2}\mu_{u_{2,t},\sigma_k})\mathbb{E}[X_{ij}^4]+\mathbb{E}|X_{ij}|^{u_{2,t}+4}\\
&\quad+2^{3u_{2,t}/2}\mathbb{E}[X_{ij}^4]\mathbb{E}|W_k|^{u_{2,t}}\big),\\
\mathcal{K}_{4,d}&=\frac{{C}_{2,t}a_{n,d,r_t}^6d^{6t-5}}{12n^2}\sum_{i,\alpha=1}^n\sum_{j,k,l,a,q=1}^d|\mathbb{E}[X_{i j}X_{i k}X_{i l}]|\big((1+ 2\cdot3^{u_{2,t}/2}\mu_{u_{2,t},\sigma_q})\mathbb{E}|X_{\alpha a}|^3\\
&\quad+\mathbb{E}|X_{\alpha a}|^{u_{2,t}+3}+12^{u_{2,t}/2}\mathbb{E}|X_{\alpha a}|^3\mathbb{E}|W_q|^{u_{2,t}}\big).
\end{align*}
\end{theorem}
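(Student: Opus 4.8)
The plan is to follow the reduction described in Section~\ref{sec2}: express $\mathbf{T}_{t,n}$ as a function of $\mathbf{W}$, split the error by the triangle inequality into a Stein part and a Taylor part, and use the two evenness hypotheses to upgrade each part from the $O(n^{-1/2})$ rate of Theorem~\ref{thm2.1} to $O(n^{-1})$. I would set $\mathbf{g}(\mathbf{w})=n^{t/2}(\mathbf{f}(n^{-1/2}\mathbf{w})-\mathbf{f}(\mathbf{0}))$, so that, since $\overline{\mathbf{X}}=n^{-1/2}\mathbf{W}$, we have $\mathbf{T}_{t,n}=\mathbf{g}(\mathbf{W})$; because $\mathbf{f}(-\mathbf{x})=\mathbf{f}(\mathbf{x})$, we also have $\mathbf{g}(-\mathbf{w})=\mathbf{g}(\mathbf{w})$, i.e.\ $\mathbf{g}$ is even. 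The triangle inequality then gives
\begin{equation*}
\Delta_h(\mathbf{T}_{t,n},\mathbf{Y}_t)\leq\Delta_h(\mathbf{g}(\mathbf{W}),\mathbf{g}(\mathbf{Z}))+\Delta_h(\mathbf{g}(\mathbf{Z}),\mathbf{Y}_t),
\end{equation*}
and I would bound the two terms separately, the first by Stein's method and the second by a Taylor expansion about $\mathbf{Y}_t$.

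For the Stein part $\Delta_h(\mathbf{g}(\mathbf{W}),\mathbf{g}(\mathbf{Z}))$ I would apply the bounds for vector-valued functions of a multivariate normal random vector developed in Section~\ref{sec2.2} (the generalisation of \cite{gaunt normal,GauntSut} to $\mathbf{g}:\mathbb{R}^d\to\mathbb{R}^m$ afforded by Lemma~\ref{bell lem}). After verifying that $\mathbf{g}$ lies in the requisite smoothness class, the mechanism producing the faster rate is as follows. Writing $\mathbb{E}[h(\mathbf{g}(\mathbf{W}))]-\mathbb{E}[h(\mathbf{g}(\mathbf{Z}))]=\mathbb{E}[\mathcal{A}\phi(\mathbf{W})]$ for the solution $\phi$ of the $\mathrm{N}_d(\mathbf{0},\Sigma)$ Stein equation with test function $h\circ\mathbf{g}$, and Taylor expanding in the summands $X_{ij}$ of $W_j$, the dominant contribution in the general case is a third-order term pairing $\mathbb{E}[X_{ij}X_{ik}X_{il}]$ with $\mathbb{E}[\partial_{jkl}\phi(\mathbf{W})]$. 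Since $\mathbf{g}$ is even, $\phi$ is even, so its third-order partial derivatives are odd and therefore have vanishing expectation under the symmetric law of $\mathbf{Z}$; replacing $\mathbf{W}$ by $\mathbf{Z}$ at the cost of a further normal-approximation error then demotes this term from $O(n^{-1/2})$ to the $O(n^{-1})$ cross term $\mathcal{K}_{4,d}$ (a third moment from one index against a sum of third absolute moments from a second, requiring up to sixth derivatives, hence $h_{6,m}$). The genuinely $O(n^{-1})$ fourth-order term of the expansion, pairing fourth moments with fourth derivatives of $\phi$, produces $\mathcal{K}_{3,d}$ (hence $h_{4,m}$); the constants $C_{2,t}$ collect the relevant products of the $A_{t+k}$.

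For the Taylor part $\Delta_h(\mathbf{g}(\mathbf{Z}),\mathbf{Y}_t)$ I would expand $h$ to first order about $\mathbf{Y}_t$,
\begin{equation*}
h(\mathbf{g}(\mathbf{Z}))-h(\mathbf{Y}_t)=\sum_{l=1}^m\partial_lh(\mathbf{Y}_t)\big(g_l(\mathbf{Z})-Y_{t,l}\big)+\tfrac12\sum_{l,l'=1}^m\partial_{ll'}h(\boldsymbol{\xi})\big(g_l(\mathbf{Z})-Y_{t,l}\big)\big(g_{l'}(\mathbf{Z})-Y_{t,l'}\big),
\end{equation*}
with $\boldsymbol{\xi}$ on the segment between $\mathbf{Y}_t$ and $\mathbf{g}(\mathbf{Z})$, and control $g_l(\mathbf{Z})-Y_{t,l}$ by Taylor's theorem applied to $\mathbf{f}$ at $\mathbf{0}$. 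The point is that $\mathbf{f}$ even and $t$ even force the order-$(t+1)$ Taylor term---proportional to the odd-order derivative $\frac{\partial^{t+1}\mathbf{f}}{\prod_{j=1}^{t+1}\partial w_{i_j}}(\mathbf{0})$---to vanish, so in the first-order term the leading surviving contribution is the order-$(t+2)$ term, of size $n^{-1}$ and controlled through $A_{t+2}$ and the Gaussian absolute moments $\mu_{r,\sigma_j}$; this yields $\mathcal{K}_{1,d}$ (paired with $m|h|_1$). In the second-order term each factor $g_l(\mathbf{Z})-Y_{t,l}$ is already $O(n^{-1/2})$ (bounded via the order-$(t+1)$ Taylor remainder and $A_{t+1}$), so their product is automatically $O(n^{-1})$ without recourse to evenness, yielding the $A_{t+1}^2$ (and lower-order $A_{t+2}^2$) contributions of $\mathcal{K}_{2,d}$ (paired with $m^2|h|_2$). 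Summing the four contributions gives \eqref{thm2.2bd}.

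The main obstacle, I expect, is the verification that $\mathbf{g}$ lies in the smoothness class required by the Section~\ref{sec2.2} bounds with an \emph{explicit and $n$-uniform} growth function, which is what lets the abstract Stein bounds be imported with honest constants. Differentiating gives $\frac{\partial^{k}\mathbf{g}}{\prod_{j=1}^{k}\partial w_{i_j}}(\mathbf{w})=n^{(t-k)/2}\frac{\partial^{k}\mathbf{f}}{\prod_{j=1}^{k}\partial w_{i_j}}(n^{-1/2}\mathbf{w})$; for $k\geq t$ the polynomial bounds $P_k(\mathbf{w})=A_{t+k}(1+\sum_i|w_i|^{r_t+k})$ translate directly, with the argument $n^{-1/2}\mathbf{w}$ generating the factor $a_{n,d,r_t}=d/n^{r_t/2}\vee1$, but for $k<t$ (where the derivatives of $\mathbf{f}$ vanish at $\mathbf{0}$ yet grow away from it) one must reinstate the $t-k$ missing powers of $n^{-1/2}$ by integrating the highest controlled derivative out from $\mathbf{0}$, so that the prefactor $n^{(t-k)/2}$ is exactly cancelled and $n$-uniformity is preserved. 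Tracking these cancellations, and then assembling the explicit constants $C_{2,t}$ and $\mathcal{K}_{1,d},\ldots,\mathcal{K}_{4,d}$ and checking the numerical thresholds (such as $n\geq12$), is routine but lengthy; I would relegate the supporting moment estimates to computations of the type carried out in the Supplementary Material.
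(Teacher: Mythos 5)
Your proposal is correct and follows essentially the same route as the paper: the same decomposition $\Delta_h(\mathbf{T}_{t,n},\mathbf{Y}_t)\leq\Delta_h(\mathbf{g}_{t,n}(\mathbf{W}),\mathbf{g}_{t,n}(\mathbf{Z}))+\Delta_h(\mathbf{g}_{t,n}(\mathbf{Z}),\mathbf{Y}_t)$, the same application of part (i) of Theorem \ref{thmsec2} (with the even-$\phi$/vanishing-third-derivative mechanism and the auxiliary equation for $\psi_{jkl}$) after verifying the dominating function for $\mathbf{g}_{t,n}$, and the same two-term Taylor expansion of $h$ about $\mathbf{Y}_t$ producing $\mathcal{K}_{1,d}$ and $\mathcal{K}_{2,d}$. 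The only (harmless) variation is in killing the order-$(t+1)$ contribution to $R_3$: you invoke $\frac{\partial^{t+1}\mathbf{f}}{\prod_{j}\partial w_{i_j}}(\mathbf{0})=\mathbf{0}$ from the evenness of $\mathbf{f}$, whereas the paper instead observes that the relevant expectation is that of an odd function of the symmetric $\mathbf{Z}$ (using only that $t$ is even), which is why the same $R_2$ bound can be reused verbatim in Theorem \ref{thm2.3} where $\mathbf{f}$ need not be even.
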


\begin{theorem}\label{thm2.3}Let $\mathbf{X}_1,\ldots,\mathbf{X}_n$ and $\mathbf{f}:\mathbb{R}^d\rightarrow\mathbb{R}^m$ (not necessarily an even function) be defined as above.  Suppose further that $t\geq2$ is even and $\mathbf{f}\in \mathcal{C}_P^{2,2,m}(\mathbb{R}^d)$ for $t=2$ and that $\mathbf{f}\in \mathcal{C}_P^{t,2,m}(\mathbb{R}^d)$ for $t\geq4$. Suppose also that $\mathbb{E}[X_{ij}X_{ik}X_{il}]=0$, for all $i,j,k,l$, and that $\mathbb{E}|X_{ij}|^{u_{3,t}+4}<\infty$ for all $i,j$, where $u_{3,2}=\max\{4(r_2+1),4r_3/3,r_4\}$ and $u_{3,t}=4(r_t+t-1)$, $t\geq4$. Let
 $C_{3,2}=\max\{8A_2^4, {2}A_2^{2}, 2^{1/3}A_3^{4/3}/n^{2/3}, A_4/n\}$ and $C_{3,t}=\max\{\frac{8A_t^4}{((t-1)!)^4},\frac{2A_t^{2}}{((t-2)!)^2},\frac{2^{1/3}A_t^{4/3}}{((t-3)!)^{4/3}}, \frac{A_t}{(t-4)!}\}$ for $t\geq4$. Suppose $n\geq8$.
 Then, for $h\in C_b^4(\mathbb{R}^m)$, 
\begin{align}\Delta_h(\mathbf{T}_{t,n},\mathbf{Y}_t)\leq\frac{1}{n}\big(m|h|_1\mathcal{K}_{1,d}+m|h|_2\mathcal{K}_{2,d}+h_{4,m}\mathcal{K}_{5,d}\big),\label{thm2.3bound}
\end{align}
where $\mathcal{K}_{1,d}$ and $\mathcal{K}_{2,d}$ are defined in the statement of Theorem \ref{thm2.2} and 
\begin{align*}
	\mathcal{K}_{5,d}&=\frac{5{C}_{3,t}a_{n,d,r_t}^4d^{4t-2}}{6n}\sum_{i=1}^n\sum_{j,k=1}^d\big((1+2^{u_{3,t}/2}\mu_{u_{3,t},\sigma_k})\mathbb{E}[X_{ij}^4]+\mathbb{E}|X_{ij}|^{u_{3,t}+4}\\
&\quad+2^{3u_{3,t}/2}\mathbb{E}[X_{ij}^4]\mathbb{E}|W_k|^{u_{3,t}}\big).
\end{align*}
\end{theorem}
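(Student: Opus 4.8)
The plan is to reduce $\Delta_h(\mathbf{T}_{t,n},\mathbf{Y}_t)$ to two pieces by a single application of the triangle inequality, exactly as outlined in the introduction. Writing $\overline{\mathbf{X}}=n^{-1/2}\mathbf{W}$, I introduce the rescaled function
\[\mathbf{g}_n(\mathbf{w})=n^{t/2}\big(\mathbf{f}(n^{-1/2}\mathbf{w})-\mathbf{f}(\mathbf{0})\big),\]
so that $\mathbf{T}_{t,n}=\mathbf{g}_n(\mathbf{W})$ by definition. Inserting the Gaussian evaluation $\mathbf{g}_n(\mathbf{Z})$ gives
\[\Delta_h(\mathbf{T}_{t,n},\mathbf{Y}_t)\le \Delta_h\big(\mathbf{g}_n(\mathbf{W}),\mathbf{g}_n(\mathbf{Z})\big)+\Delta_h\big(\mathbf{g}_n(\mathbf{Z}),\mathbf{Y}_t\big).\]
The first term is a pure Stein's-method comparison of $\mathbf{g}_n$ evaluated at the (dependent-component) normalised sum $\mathbf{W}$ and at its Gaussian limit $\mathbf{Z}$; the second term involves only the Gaussian vector $\mathbf{Z}$ and is treated by elementary moment computations.

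For the Stein term I would apply the vector-valued, dependent-component Stein's-method machinery developed in Section~\ref{sec2.2} (the extension of \cite{gaunt normal,GauntSut}) to the function $\mathbf{g}_n$. Its derivatives satisfy, schematically, $\partial^k\mathbf{g}_n(\mathbf{w})=n^{(t-k)/2}\,\partial^k\mathbf{f}(n^{-1/2}\mathbf{w})$; since the partial derivatives of $\mathbf{f}$ of orders $1,\dots,t-1$ vanish at $\mathbf{0}$, a Taylor estimate shows these grow polynomially with constants uniform in $n$, so $\mathbf{g}_n$ lies in the relevant smoothness class with $A_t$-type bounds (this is where the factor $a_{n,d,r_t}$ and the constant $C_{3,t}$ enter). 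Under the hypothesis $\mathbb{E}[X_{ij}X_{ik}X_{il}]=0$, the leading skewness contribution in the Stein comparison cancels, upgrading the generic $n^{-1/2}$ rate to $n^{-1}$ and producing precisely the $h_{4,m}\mathcal{K}_{5,d}/n$ term. The chain-rule bookkeeping for the composite test function $h\circ\mathbf{g}_n$ — converting smoothness of $h$ and polynomial growth of the derivatives of $\mathbf{g}_n$ into the $h_{p,m}$ and Stirling-number factors — is carried out via Lemma~\ref{bell lem}.

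For the remainder term I would Taylor expand $\mathbf{f}$ about $\mathbf{0}$. Because all derivatives below order $t$ vanish there,
\[\mathbf{g}_n(\mathbf{Z})-\mathbf{Y}_t=\mathbf{S}_1+\mathbf{S}_2,\qquad \mathbf{S}_1=\frac{1}{\sqrt{n}\,(t+1)!}\sum_{i_1,\dots,i_{t+1}=1}^{d}\frac{\partial^{t+1}\mathbf{f}}{\prod_{j=1}^{t+1}\partial w_{i_j}}(\mathbf{0})\,Z_{i_1}\cdots Z_{i_{t+1}},\]
with $\mathbf{S}_1=O(n^{-1/2})$ and $\mathbf{S}_2$ the integral-form $(t+2)$-th order remainder, which is $O(n^{-1})$. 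Expanding $h$ to second order about $\mathbf{Y}_t$, the dangerous first-order term is $\sum_l\mathbb{E}[\partial_l h(\mathbf{Y}_t)S_{1,l}]$. Here the hypothesis that $t$ is even is decisive: $\mathbf{Y}_t=\frac{1}{t!}\sum\partial^t\mathbf{f}(\mathbf{0})Z_{i_1}\cdots Z_{i_t}$ is an even function of $\mathbf{Z}$, so $\partial_l h(\mathbf{Y}_t)$ is even, whereas $\mathbf{S}_1$ is an odd function of $\mathbf{Z}$; by symmetry of the centred Gaussian this expectation vanishes. The surviving pieces — the term linear in $\mathbf{S}_2$ and the terms quadratic in $\mathbf{S}_1$ — are genuinely $O(n^{-1})$ and, after bounding the Gaussian moments by $\mu_{r,\sigma}$, yield the $m|h|_1\mathcal{K}_{1,d}/n$ and $m|h|_2\mathcal{K}_{2,d}/n$ contributions respectively.

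The main obstacle is conceptual rather than computational: the whole theorem hinges on two independent cancellations of the would-be leading $O(n^{-1/2})$ terms — the vanishing third moments in the Stein term and the Gaussian parity (forced by $t$ even) in the remainder term — and one must check that both survive the composition with the vector-valued test function $h$. If either mechanism failed, one would recover only the $O(n^{-1/2})$ bound of Theorem~\ref{thm2.1}. The most delicate estimates are then (i) establishing the $O(n^{-1})$ Stein bound for the $n$-dependent function $\mathbf{g}_n$ with explicit, $n$-uniform constants, and (ii) controlling the random intermediate argument in the second-order Taylor remainder of $h$ using the polynomial-growth hypotheses on $\mathbf{f}$ together with the moment assumptions $\mathbb{E}|X_{ij}|^{u_{3,t}+4}<\infty$, so that every error term is a bona fide $O(n^{-1})$.
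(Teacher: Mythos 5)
Your proposal follows exactly the paper's route: the same triangle-inequality decomposition through $\mathbf{g}_{t,n}(\mathbf{Z})$, the Stein term handled by the vector-valued dependent-component bound of Theorem \ref{thmsec2}(ii) (where the vanishing third moments replace evenness of $\mathbf{f}$), and the Gaussian remainder handled by Taylor expansion with the parity cancellation forced by $t$ even, plus the SM-style verification that $\mathbf{g}_{t,n}$ lies in $C_P^{4,m}(\mathbb{R}^d)$ with $n$-uniform constants. The argument is correct and essentially identical to the paper's proof.
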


The following theorems provide improved bounds in the univariate delta method ($d=m=1$). The bounds hold for wider classes of test functions $h$, and require weaker moment conditions and weaker assumptions on the derivatives of the function $f:\mathbb{R}\rightarrow\mathbb{R}$. We simplify notation as follows. For zero mean independent random variables $X_1,\ldots,X_n$, we denote their sample mean by $\overline{X}=n^{-1}\sum_{i=1}^nX_i$, and also let $W=n^{-1/2}\sum_{i=1}^nX_i$. We suppose that $\mathrm{Var}(W)=\sigma^2\in(0,\infty)$. The condition on the derivatives of $f:\mathbb{R}\rightarrow\mathbb{R}$ at the mean $0$ now simply reads that $f^{(t)}(0)\not=0$ and (in the case $t\geq2$) $f^{(j)}(0)=0$ for $j=1,\ldots, t-1$.

\begin{theorem}\label{thm2.1univ} Let the random variables $X_1,\ldots,X_n$ and the function $f:\mathbb{R}\rightarrow\mathbb{R}$ be defined as above. Suppose that, for $t\geq1$, $f\in \mathcal{C}^{t,1,1}_P(\mathbb{R})$ and $\mathbb{E}|X_i|^{u_{4,t}+3}<\infty$ for all $i$, where $u_{4,t}=r_t+t-1$. Suppose $n\geq8$. Then, for $h\in C^1_b(\mathbb{R}),$
	\begin{align}
		\Delta_h(T_{t,n},Y_t)\leq\frac{\|h'\|}{\sqrt{n}}(\mathcal{M}_{1,1}+\mathcal{M}_3),
	\label{thm2.4bd}	
	\end{align}
where $\mathcal{M}_{1,1}$ is defined as in the statement of Theorem \ref{thm2.1} and
\begin{align*}\mathcal{M}_3&=\frac{3A_t}{(t-1)!\sigma^2n}\sum_{i=1}^{n}\big(
	(\alpha_{u_{4,t}}+2^{u_{4,t}/2}\gamma_{u_{4,t},\sigma})\mathbb{E}|X_i|^3\nonumber\\
&\quad+2^{3u_{4,t}/2}\beta_{u_{4,t}}\mathbb{E}|X_i|^3\mathbb{E}|W|^{u_{4,t}}+\beta_{u_{4,t}}\mathbb{E}|X_i|^{u_{4,t}+3}\big),
\end{align*}	
with $(\alpha_r,\beta_r,\gamma_{r,\sigma})=(4,4,2\mu_{r,\sigma})$ if $0\leq r\leq1$, and	$(\alpha_r,\beta_r,\gamma_{r,\sigma})=(r+3,r+5,\sigma^{-1}(r+1)\mu_{r+1,\sigma})$ if $r>1$.
\end{theorem}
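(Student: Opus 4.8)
The plan is to reduce the statement to a ``function of a normal'' comparison plus an elementary remainder. Introduce the rescaled function $g(w):=n^{t/2}(f(w/\sqrt{n})-f(0))$, so that $T_{t,n}=g(W)$, and recall that in the univariate case (\ref{limity}) reads $Y_t=\frac{1}{t!}f^{(t)}(0)Z^t$ with $Z\sim\mathrm{N}(0,\sigma^2)$. Since $g(Z)$ and $Y_t$ live on the same space, the triangle inequality gives
\[
\Delta_h(T_{t,n},Y_t)\le \Delta_h\big(g(W),g(Z)\big)+\Delta_h\big(g(Z),Y_t\big)=:(\mathrm{I})+(\mathrm{II}),
\]
and I would bound the two pieces separately, the first by Stein's method and the second by Taylor's theorem.

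For $(\mathrm{II})$ I would use that $f^{(j)}(0)=0$ for $1\le j\le t-1$, so that $g(Z)-Y_t=n^{t/2}\big(f(Z/\sqrt n)-f(0)-\tfrac{1}{t!}f^{(t)}(0)(Z/\sqrt n)^t\big)$ is exactly $n^{t/2}$ times the order-$t$ Taylor remainder of $f$ at $0$. Bounding this remainder via $|f^{(t+1)}(s)|\le A_{t+1}(1+|s|^{r_{t+1}})$ and rescaling yields $|g(Z)-Y_t|\le \frac{A_{t+1}}{(t+1)!}n^{-1/2}|Z|^{t+1}(1+n^{-r_{t+1}/2}|Z|^{r_{t+1}})$; taking expectations and using the normal absolute moments $\mu_{r,\sigma}$ gives $\mathbb{E}|g(Z)-Y_t|\le n^{-1/2}\mathcal{M}_{1,1}$, whence $(\mathrm{II})\le \frac{\|h'\|}{\sqrt n}\mathcal{M}_{1,1}$ because $h$ is Lipschitz. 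This is precisely the remainder computation already carried out for Theorem \ref{thm2.1}, so it can simply be invoked.

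For $(\mathrm{I})$ I would set $H:=h\circ g$ and apply a first-order (polynomially weighted) Stein bound for $\Delta_h(g(W),g(Z))=\Delta_H(W,Z)$. The key pointwise input is a bound on $g'$: again using that the lower derivatives vanish at $0$, Taylor's theorem gives $f'(x)=\frac{x^{t-1}}{(t-1)!}f^{(t)}(\xi)$ for some $\xi$ between $0$ and $x$, so that $|g'(w)|=n^{(t-1)/2}|f'(w/\sqrt n)|\le \frac{A_t}{(t-1)!}|w|^{t-1}(1+n^{-r_t/2}|w|^{r_t})$ and hence $|H'(w)|\le \|h'\|\,|g'(w)|$, a Lipschitz bound with polynomial weight of degree $u_{4,t}=r_t+t-1$. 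I would then solve the $\mathrm{N}(0,\sigma^2)$ Stein equation $\sigma^2\psi'(w)-w\psi(w)=H(w)-\mathbb{E}[H(Z)]$, write $\mathbb{E}[H(W)]-\mathbb{E}[H(Z)]=\mathbb{E}[\sigma^2\psi'(W)-W\psi(W)]$, and run the leave-one-out expansion with $W_i=W-n^{-1/2}X_i$: using independence and $\mathbb{E}[X_i]=0$, the leading terms cancel and one is left with a third-order remainder governed by a polynomially weighted bound on $\psi''$ (itself controlled by the weighted Lipschitz bound on $H$). Splitting $W=W_i+n^{-1/2}X_i$ and applying power-mean inequalities to the weights produces exactly the three families of terms $\mathbb{E}|X_i|^3$, $\mathbb{E}|X_i|^3\,\mathbb{E}|W|^{u_{4,t}}$ and $\mathbb{E}|X_i|^{u_{4,t}+3}$ appearing in $\mathcal{M}_3$, with the constants $\alpha_{u_{4,t}},\beta_{u_{4,t}},\gamma_{u_{4,t},\sigma}$ and the case split $r\le1$ versus $r>1$ recording whether the low- or high-degree part of the weight dominates; the prefactor $\frac{3A_t}{(t-1)!\sigma^2 n}$ collects the Taylor constant from $g'$, the Stein-solution constant, and the $n^{-3/2}$ from the expansion, giving $(\mathrm{I})\le \frac{\|h'\|}{\sqrt n}\mathcal{M}_3$. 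Adding the two bounds yields (\ref{thm2.4bd}).

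The hard part will be the weighted first-order Stein estimate for $(\mathrm{I})$: establishing the sharp polynomial-weight bounds on $\psi'$ and $\psi''$ and then executing the leave-one-out expansion so that every remainder reassembles into the three moment families of $\mathcal{M}_3$ with the stated constants. The case distinction $r\le1$/$r>1$ and the finiteness of all expectations under the hypothesis $\mathbb{E}|X_i|^{u_{4,t}+3}<\infty$ are exactly where care is needed; these bookkeeping-heavy weighted calculations are the natural candidate to defer to the Supplementary Material, while the reduction and the Taylor estimate for $(\mathrm{II})$ can be dispatched in the main text.
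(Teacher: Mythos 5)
Your proposal is correct and follows essentially the same route as the paper: the identical triangle-inequality decomposition, the same Taylor-remainder bound $\mathcal{M}_{1,1}/\sqrt{n}$ for the $g(Z)$-versus-$Y_t$ term, and the same weighted Stein estimate for $\Delta_h(g(W),g(Z))$ driven by the bound $|g_{t,n}'(w)|\leq \tfrac{2A_t}{(t-1)!}(1+|w|^{u_{4,t}})$. The only difference is presentational: the paper invokes the pre-packaged Theorem \ref{indep1} (whose proof rests on the leave-one-out expansion and the solution bounds of Lemma 3.6 of \cite{GauntSut}) rather than re-deriving the weighted Stein equation estimates, and defers the verification of the dominating function to the SM, exactly as you anticipate.
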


\begin{theorem}\label{thm2.2univ} Let $X_1,\ldots,X_n$ and $f:\mathbb{R}\rightarrow\mathbb{R}$ be defined as above. Suppose $t\geq2$ is even and that $f$ is an even function. Suppose further that $f\in\mathcal{C}^{t,2,1}_P(\mathbb{R})$ and $\mathbb{E}|X_i|^{u_{5,t}+4}<\infty$ for all $i$, where $u_{5,t}=2(r_t+t-1)$. Let ${C}_{4,t}=\max\{A_t/(t-2)!,2A_t^2/((t-1)!)^2\}$. Suppose $n\geq12$. Then, for $h\in C^2_b(\mathbb{R})$,
\begin{align}\label{thm2.5bd}\Delta_h(T_{t,n},Y_t)\leq\frac{1}{n}\Big(\|h'\|\mathcal{K}_{1,1}+\|h''\|\mathcal{K}_{2,1}+(\|h'\|+\|h''\|)\Big(\frac{13}{10}\mathcal{K}_{6}+\mathcal{K}_7\Big)\Big),
\end{align}
where $\mathcal{K}_{1,1}$ and $\mathcal{K}_{2,1}$ are defined as in the statement of Theorem \ref{thm2.2} and
\begin{align*}
		\mathcal{K}_6&=\frac{10C_{4,t}}{3\sigma^2n}\sum_{i=1}^{n}\big(
	(\alpha_{u_{5,t}}+2^{u_{5,t}/2}\gamma_{u_{5,t},\sigma})\mathbb{E}[X_i^4]+2^{3u_{5,t}/2}\beta_{u_{5,t}}\mathbb{E}[X_i^4]\mathbb{E}|W|^{u_{5,t}}\\
	&\quad+\beta_{u_{5,t}}\mathbb{E}|X_i|^{u_{5,t}+4}\big),\nonumber\\
	\mathcal{K}_7&=\frac{3C_{4,t}}{2\sigma^4n^2}\sum_{i,\alpha=1}^{n}|\mathbb{E}[X_i^3]|\big(
	(\tilde\alpha_{u_{5,t}}+3^{u_{5,t}/2}\tilde\gamma_{u_{5,t},\sigma})\mathbb{E}|X_\alpha|^3+12^{u_{5,t}/2}\tilde\beta_{u_{5,t}}\mathbb{E}|X_\alpha|^3\mathbb{E}|W|^{u_{5,t}}\\
	&\quad+\tilde\beta_{u_{5,t}}\mathbb{E}|X_\alpha|^{u_{5,t}+3}\big),
\end{align*}
where $(\tilde\alpha_r,\tilde\beta_r,\tilde\gamma_{r,\sigma})=(10,10,10\mu_{r+1,\sigma})$ if $0\leq r\leq1$, and $(\tilde\alpha_r,\tilde\beta_r,\tilde\gamma_{r,\sigma})=(r^2+r+8,r^2+2r+18,\sigma^{-1}(2r^2+r+5)\mu_{r+1,\sigma})$ if $r>1$.
\end{theorem}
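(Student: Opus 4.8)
The plan is to exploit the exact representations $T_{t,n}=\phi_n(W)$ and $Y_t=g_*(Z)$, where $\phi_n(w):=n^{t/2}(f(n^{-1/2}w)-f(0))$ and $g_*(z):=\frac{f^{(t)}(0)}{t!}z^t$ (recall $\overline{X}=n^{-1/2}W$ and, in the univariate case, $(\ref{limity})$ reads $Y_t=\frac{1}{t!}f^{(t)}(0)Z^t$). I would then split via the triangle inequality
\[
\Delta_h(T_{t,n},Y_t)\leq\Delta_h(\phi_n(W),\phi_n(Z))+\Delta_h(\phi_n(Z),g_*(Z)).
\]
The first term is a pure Stein term of the form $|\mathbb{E}[h(g(W))]-\mathbb{E}[h(g(Z))]|$ with $g=\phi_n$, to be controlled by the results of Section \ref{sec2.2}; the second is a remainder involving only the Gaussian $Z$ (equivalently $Y_t$), to be handled by elementary calculations.

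For the remainder $\Delta_h(\phi_n(Z),g_*(Z))$, I would Taylor expand $f$ about $0$ to order $t+1$ with Lagrange remainder (legitimate since $f\in\mathcal{C}^{t,2,1}_P(\mathbb{R})$), using $f^{(j)}(0)=0$ for $j<t$, to write $\phi_n(Z)-g_*(Z)=D_1+D_2$ with $D_1=\frac{f^{(t+1)}(0)}{(t+1)!}n^{-1/2}Z^{t+1}$ and $D_2=\frac{f^{(t+2)}(\xi)}{(t+2)!}n^{-1}Z^{t+2}$, where $\xi$ lies between $0$ and $n^{-1/2}Z$. A second-order Taylor expansion of $h$ about $g_*(Z)$ gives $\mathbb{E}[h(\phi_n(Z))]-\mathbb{E}[h(g_*(Z))]=\mathbb{E}[h'(g_*(Z))(D_1+D_2)]+\frac12\mathbb{E}[h''(\zeta)(D_1+D_2)^2]$. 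The key simplification is that the first-order contribution of $D_1$ vanishes: since $t$ is even, $h'(g_*(Z))$ is an even function of $Z$ while $Z^{t+1}$ is odd, so $\mathbb{E}[h'(g_*(Z))Z^{t+1}]=0$ by Gaussian symmetry. Bounding $|f^{(t+1)}(0)|\leq A_{t+1}$, $|f^{(t+2)}(\xi)|\leq A_{t+2}(1+|\xi|^{r_{t+2}+2})$ and evaluating the remaining Gaussian moments $\mu_{\cdot,\sigma}$ yields precisely the $\|h'\|\mathcal{K}_{1,1}$ contribution (from $\|h'\|\,\mathbb{E}|D_2|$) and the $\|h''\|\mathcal{K}_{2,1}$ contribution (from $\frac12\|h''\|\,\mathbb{E}[(D_1+D_2)^2]$), with $\mathcal{K}_{1,1},\mathcal{K}_{2,1}$ exactly as in Theorem \ref{thm2.2}. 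This step uses only that $t$ is even, not that $f$ is even, so the $A_{t+1}^2$ term in $\mathcal{K}_{2,1}$ is a valid (though, in the even case, non-tight) bound inherited for notational uniformity.

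For the Stein term $\Delta_h(\phi_n(W),\phi_n(Z))$, I would apply the univariate ($d=m=1$) specialisation of the Stein's-method-for-functions-of-normal-vectors bounds of Section \ref{sec2.2} to $g=\phi_n$. The decisive input is that $\phi_n$ is an \emph{even} function, because $f$ is even; this triggers the even-function improvement and upgrades the rate from $n^{-1/2}$ to $n^{-1}$. To invoke these bounds I must first verify that $\phi_n$ has the required smoothness and polynomial growth: since $\phi_n^{(k)}(w)=n^{(t-k)/2}f^{(k)}(n^{-1/2}w)$ and $f^{(j)}(0)=0$ for $j<t$, a Taylor expansion of $f^{(k)}$ shows each $\phi_n^{(k)}$ is dominated by a fixed polynomial uniformly in $n$ (for $n$ large), controlled through the bounds $|f^{(t+k)}|\leq A_{t+k}(1+\sum_i|w_i|^{r_t+k})$ defining $\mathcal{C}^{t,2,1}_P(\mathbb{R})$. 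This is the origin of $C_{4,t}=\max\{A_t/(t-2)!,\,2A_t^2/((t-1)!)^2\}$, reflecting the leading behaviour $\phi_n'\sim\frac{f^{(t)}(0)}{(t-1)!}w^{t-1}$ and $\phi_n''\sim\frac{f^{(t)}(0)}{(t-2)!}w^{t-2}$. The even-function bound produces a single-sum term in $\mathbb{E}[X_i^4]$ and a double-sum term in $|\mathbb{E}[X_i^3]|$; converting the accompanying moment factors into closed form via bounds on $\mathbb{E}|W|^{u_{5,t}}$ (valid under $\mathbb{E}|X_i|^{u_{5,t}+4}<\infty$ with $u_{5,t}=2(r_t+t-1)$) gives $\tfrac{13}{10}\mathcal{K}_6$ and $\mathcal{K}_7$, each multiplied by $(\|h'\|+\|h''\|)$.

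The \emph{main obstacle} is the Stein term. The Gaussian remainder is genuinely elementary, but the even-function Stein bound requires the refined univariate analysis underlying Section \ref{sec2.2}: one solves the normal Stein equation with the composite test function $h\circ\phi_n$, bounds the derivatives of its solution using (the univariate case of) Lemma \ref{bell lem}, and—in order to retain only $\|h'\|,\|h''\|$ rather than the six derivatives of $h$ needed in the multivariate Theorem \ref{thm2.2}—must integrate by parts against the Gaussian density to transfer derivatives off $h$ and onto $\phi_n$ and the density. Carefully tracking the explicit constants ($C_{4,t}$, the coefficients $\tfrac{13}{10}$ and $\tfrac32$, and the $(\alpha_r,\beta_r,\gamma_{r,\sigma})$ and tilded families) through this argument, together with verifying the uniform-in-$n$ polynomial control of the derivatives of $\phi_n$, is the technically demanding part; the rest of the proof is routine Taylor expansion and Gaussian moment evaluation.
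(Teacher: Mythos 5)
Your proposal is correct and follows essentially the same route as the paper: the triangle-inequality split into a Stein term $\Delta_h(g_{t,n}(W),g_{t,n}(Z))$ and a Gaussian remainder, the remainder killed at order $n^{-1/2}$ by the odd-symmetry of the $Z^{t+1}$ term (yielding $\mathcal{K}_{1,1}$ and $\mathcal{K}_{2,1}$ exactly as in the proof of Theorem \ref{thm2.2}), and the Stein term controlled by the univariate even-function bound (\ref{a.71}) of Theorem \ref{indep2}(i) with dominating function $P(w)=2C_{4,t}(1+|w|^{2(r_t+t-1)})$. Your identification of the origin of $C_{4,t}$ from the growth of $g_{t,n}'$ and $g_{t,n}''$ matches the paper's SM verification.
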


\begin{theorem}\label{thm2.3univ}Let $X_1,\ldots,X_n$ and $f:\mathbb{R}\rightarrow\mathbb{R}$ (not necessarily an even function) be defined as above. Suppose further that $t\geq2$ is even and that $f\in\mathcal{C}^{t,2,1}_P(\mathbb{R})$. Suppose $\mathbb{E}[X_i^3]=0$ for all $1\leq i\leq n$, and $\mathbb{E}|X_i|^{u_{5,t}+4}<\infty$ for all $i$. Suppose $n\geq8$. Then, for $h\in C^2_b(\mathbb{R})$,
	\begin{align}\label{thm2.6bd}
	\Delta_h(T_{t,n},Y_t)\leq \frac{1}{n}\big(\|h'\|\mathcal{K}_{1,1}+\|h''\|\mathcal{K}_{2,1}+(\|h'\|+\|h''\|)\mathcal{K}_6\big).
	\end{align}
\end{theorem}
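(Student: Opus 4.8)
The plan is to reduce the problem to a Stein-type bound for a function of a single normal random variable plus an elementary Taylor remainder. Writing $\overline X=n^{-1/2}W$, I would introduce the $n$-dependent function $g(w)=n^{t/2}(f(n^{-1/2}w)-f(0))$, so that $T_{t,n}=g(W)$ and $g(Z)=n^{t/2}(f(n^{-1/2}Z)-f(0))$ for $Z\sim\mathrm{N}(0,\sigma^2)$. The triangle inequality then gives
\begin{equation*}
\Delta_h(T_{t,n},Y_t)\leq\Delta_h(g(W),g(Z))+\Delta_h(g(Z),Y_t),
\end{equation*}
and I would bound the two terms separately: the first by Stein's method for functions of a normal random variable, and the second by a direct Taylor expansion about the mean.

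For the first term I would appeal to the univariate Stein's method bounds for functions of a normal random variable underlying Section \ref{sec2.2} (the generalisation of \cite{gaunt normal,GauntSut}), applied to $g$. Since $g^{(k)}(w)=n^{(t-k)/2}f^{(k)}(n^{-1/2}w)$, the hypothesis $f\in\mathcal{C}^{t,2,1}_P(\mathbb{R})$ controls $g^{(k)}$ for $k\geq t$ directly, while for $k<t$ the vanishing derivatives $f^{(j)}(0)=0$, $1\leq j\leq t-1$, let me write $f^{(k)}$ as an iterated integral of $f^{(t)}$ and thereby bound $g^{(k)}$ by a polynomial uniform in $n$ (the prefactor $n^{(t-k)/2}$ cancelling the $n^{-(t-k)/2}$ produced by the rescaled argument). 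The decisive point is that the hypothesis $\mathbb{E}[X_i^3]=0$ annihilates the leading $O(n^{-1/2})$ contribution in the Stein expansion, leaving an $O(n^{-1})$ bound in terms of fourth moments; tracking the constants $C_{4,t}$ and the moment exponent $u_{5,t}$, this contributes precisely the $\mathcal{K}_6$ term (against both $\|h'\|$ and $\|h''\|$), and the absence of any surviving third-moment term is exactly why the $\mathcal{K}_7$ term of Theorem \ref{thm2.2univ}, which is proportional to $|\mathbb{E}[X_i^3]|$, does not appear and why $f$ need not be assumed even.

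For the second term I would Taylor expand $f$ about $0$ to order $t+2$; using $f^{(j)}(0)=0$ for $1\leq j\leq t-1$ and $Y_t=\frac{f^{(t)}(0)}{t!}Z^t$, this yields
\begin{equation*}
g(Z)-Y_t=\frac{f^{(t+1)}(0)}{(t+1)!}n^{-1/2}Z^{t+1}+\frac{f^{(t+2)}(\zeta)}{(t+2)!}n^{-1}Z^{t+2}
\end{equation*}
for some $\zeta$ between $0$ and $n^{-1/2}Z$. A first-order Taylor expansion of $h$ about $Y_t$ reduces $\Delta_h(g(Z),Y_t)$ to $|\mathbb{E}[h'(Y_t)(g(Z)-Y_t)]|$ plus a second-order term controlled by $\frac{1}{2}\|h''\|\mathbb{E}[(g(Z)-Y_t)^2]$. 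The key observation is that, because $t$ is even, $h'(Y_t)=h'(\frac{f^{(t)}(0)}{t!}Z^t)$ is an even function of $Z$ whereas $Z^{t+1}$ is odd, so the would-be $O(n^{-1/2})$ contribution $\mathbb{E}[h'(Y_t)Z^{t+1}]$ vanishes by symmetry of the centred normal law. What remains is the $O(n^{-1})$ term involving $f^{(t+2)}$, bounded using $|f^{(t+2)}(\zeta)|\leq A_{t+2}(1+|\zeta|^{r_t+2})$ and Gaussian moments to give $\mathcal{K}_{1,1}$, together with the second-order term, whose leading part is $(\frac{f^{(t+1)}(0)}{(t+1)!})^2n^{-1}\mathbb{E}[Z^{2(t+1)}]$ and which gives $\mathcal{K}_{2,1}$.

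I expect the main obstacle to be the first term: invoking the Stein's method bound for the genuinely $n$-dependent function $g$ and verifying that all of its derivatives obey polynomial growth bounds uniform in $n$, so that the explicit constants, the moment exponent $u_{5,t}$ and the various powers of $n$ combine into a clean $O(n^{-1})$ bound. The Taylor-remainder term, by contrast, is essentially elementary once the parity cancellation is identified, and the whole argument runs in close parallel to the proof of Theorem \ref{thm2.2univ}, the simplification here being that $\mathbb{E}[X_i^3]=0$ removes $\mathcal{K}_7$ and dispenses with the evenness hypothesis on $f$.
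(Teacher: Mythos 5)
Your proposal is correct and follows essentially the same route as the paper: the same triangle-inequality decomposition, the same Taylor-plus-parity argument (for $t$ even) giving the $\mathcal{K}_{1,1}$ and $\mathcal{K}_{2,1}$ terms, and the same use of the univariate Stein bound under $\mathbb{E}[X_i^3]=0$ (the paper's Theorem \ref{indep2}(ii) with dominating function $P(w)=2C_{4,t}(1+|w|^{2(r_t+t-1)})$) to produce the $\mathcal{K}_6$ term without the evenness hypothesis on $f$.
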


\begin{remark}If one only requires bounds with an explicit dependence on the sample size $n$ and dimensions $d$ and $m$, but not explicit constants, then it is possible to obtain simplified bounds under the assumptions of each of Theorems \ref{thm2.1}--\ref{thm2.3univ}, as well as Theorems \ref{theoremsec21}--\ref{indep2} in the next section. For example, under the assumptions of Theorem \ref{thm2.1}, there exists a constant $C$ that does not depend on $n$, $d$ and $m$ such that
\begin{align*}\Delta_h(\mathbf{T}_{t,n},\mathbf{Y}_t)\leq\frac{Ca_{n,d,r_t}^3d^{3t-1}h_{3,m}}{n^{3/2}}\sum_{i=1}^n\sum_{j=1}^d(1+\mathbb{E}|X_{ij}|^{u_{1,t}+3}).
\end{align*}
This simplification is obtained from the following considerations. Firstly, we have the bound $\mathbb{E}|X_{ij}|^3\leq 1+\mathbb{E}|X_{ij}|^{u_{1,t}+3}$, due to the basic inequality $|x|^a\leq 1+|x|^{b}$ for $0\leq a<b$. Secondly, for a constant $K$ not depending on $n$, we have $\mathbb{E}|W_k|^r\leq K(1+n^{-1}\sum_{i=1}^{n}\mathbb{E}|X_{ik}|^{r})$.
For $r\geq2$, this follows from the Marcinkiewiczi-Zygmund inequality \cite{mz37} $\mathbb{E}|\sum_{i=1}^{n}Y_i|^r\leq C_r\{\sum_{i=1}^{n}\mathbb{E}|Y_i|^r+(\sum_{i=1}^{n}\mathbb{E}[Y_i^2])^{1/2}\}$ and the inequality $\mathbb{E}|Y|^a\leq1+\mathbb{E}|Y|^b,$ for any $0\leq a< b$. For $0\leq r<2$, we simply apply H\"older's inequality, $\mathbb{E}|W_k|^r\leq(\mathbb{E}[W_k^2])^{r/2}=\sigma_k^r$.  Thirdly, by H\"{o}lder's inequality, we have $\mathbb{E}|X_{ij}|^a\mathbb{E}|X_{ik}|^b\leq(\mathbb{E}|X_{ij}|^{a+b})^{a/(a+b)} (\mathbb{E}|X_{ik}|^{a+b})^{b/(a+b)} \leq\max\{\mathbb{E}|X_{ij}|^{a+b},\mathbb{E}|X_{ik}|^{a+b}\}$, for $a,b\geq1$.
\end{remark}

\begin{remark}\label{vfnd}In Theorem \ref{thm2.1}, we assumed that $n\geq d^6$ in order to obtain a more compact final bound. This assumption is very mild, as we require that $n/ (d^{6t}m^6)\rightarrow\infty$ for the bound (\ref{thm2.1bd}) of Theorem \ref{thm2.1} to tend to zero. 

If $r_t\geq1/(3t)$ (so that $a_{n,d,r_t}=1$ for $n\geq d^{6t}$), the bound (\ref{thm2.1bd}) tends to zero if  $n/ (d^{6t}m^6)\rightarrow\infty$. 
If $0\leq r_t<1/(3t)$, then the dependence on the dimension becomes worse. For example, when $r_t=0$, we have $a_{n,d,0}=d$, and for the bound (\ref{thm2.1bd}) to tend to zero we require that $n/(d^{6t+6}m^6)\rightarrow\infty$. In most applications (this is the case for all examples from Section \ref{sec3}), we would have either $r_t>1/(3t)$ or $r_t=0$. In the latter case, we can apply the bounds of Section \ref{sec2.2}. Moreover, if we can find $A_t>0$ and $0\leq r_t<1/(3t)$ such that $|\frac{\partial^tf_l(\mathbf{w})}{\prod_{j=1}^t\partial w_{i_j}}|\leq A_t(1+\sum_{i=1}^d|w_i|^{r_t})$ for all $\mathbf{w}\in\mathbb{R}^d$, then we can also find $A_t'>0$ and $ r_t'\geq1/(3t)$ such that $|\frac{\partial^tf_l(\mathbf{w})}{\prod_{j=1}^t\partial w_{i_j}}|\leq A_t'(1+\sum_{i=1}^d|w_i|^{r_t'})$ for all $\mathbf{w}\in\mathbb{R}^d$, since $|w_i|^{r_t}\leq 1+|w_i|^{r_t'}$. Therefore, if for the problem at hand, the dependence on the dimension $d$ is more important than the moment conditions, we can take $r_t\geq1/(3t)$.  Similar comments apply to the bounds of Theorems \ref{thm2.2} and \ref{thm2.3}; in particular, the bounds in Theorems \ref{thm2.2} and \ref{thm2.3} tend to zero even when the dimensions $d$ and $m$ are allowed to grow with $n$ provided $n/ (d^{6t}m^6)\rightarrow\infty$ (if $r_t\geq1/(3t)$, so that $a_{n,d,r_t}=1$ for $n\geq d^{6t}$) and $n/ (d^{4t}m^4)\rightarrow\infty$ (if $r_t\geq1/(2t)$, so that $a_{n,d,r_t}=1$ for $n\geq d^{4t}$), respectively.  

\end{remark}

\begin{remark}\label{rem2.1} The premise of smooth test functions is crucial, because $O(n^{-1})$ bounds like those of Theorems \ref{thm2.2}, \ref{thm2.3}, \ref{thm2.2univ} and \ref{thm2.3univ} will in general not hold for non-smooth test functions. To see this, consider the single point test function $h\equiv\chi_{\{0\}}$ and suppose $n=2k$. Consider the setting of the aforementioned theorems with $f(x)=x^t$, $T_{t,n}=n^{t/2}(f(\overline{X})-f(0))=n^{t/2}\overline{X}^t$, $Y_t=Z^t$ (where $Z\sim\mathrm{N}(0,1)$), and let $X_1,\ldots,X_n$ be i.i.d.\ with $\mathbb{P}(X_1=-1)=\mathbb{P}(X_1=1)=1/2.$
	Then
 \[\mathbb{E}\big[h\big(n^{t/2}\overline{X}^t\big)\big]=\mathbb{P}\big(n^{t/2}\overline{X}^t=0\big)=\mathbb{P}\Big(\sum_iX_i=0\Big)=\binom{2k}{k}\Big(\frac{1}{2}\Big)^{2k}\approx\frac{1}{\sqrt{\pi k}}=\sqrt{\frac{2}{\pi n}},\]
 where we used Stirling's approximation. Since $\mathbb{E}[h(Z^t)]=\mathbb{P}(Z^t=0)=0$, the Kolmogorov distance between the distributions of $n^{t/2}\overline{X}^t$ and $Z^t$ cannot be of smaller order than $n^{-1/2}$.
\end{remark}

\begin{remark}\label{rem2.2} The bounds of Theorems \ref{thm2.1}--\ref{thm2.3univ} can be expressed in terms of convergence determining integral probability metrics (IPMs) as follows. 
Let
\begin{align*}\mathcal{H}_{p}&=\{h:\mathbb{R}^m\rightarrow\mathbb{R}\,:\,\text{$h^{(p-1)}$ is Lipschitz with $|h|_j\leq1$, $1\leq j\leq p$}\}.
\end{align*} 
Then we define the IPM between the laws of $\mathbb{R}^m$-valued random vectors $\mathbf{X}$ and $\mathbf{Y}$ by $d_{p}(\mathbf{X},\mathbf{Y})=\sup_{h\in\mathcal{H}_{p}}|\mathbb{E}[h(\mathbf{X})]-\mathbb{E}[h(\mathbf{Y})]|$ (with standard abuse of notation). Note that $d_1$ corresponds to the Wasserstein distance, which we denote by $d_{\mathrm{W}}$. In this case, we obtain from the bound (\ref{thm2.4bd}) of Theorem \ref{thm2.1univ} the Wasserstein distance bound 
\begin{equation}\label{dw1}d_{\mathrm{W}}(T_{t,n},Y_t)\leq(\mathcal{M}_{1,1}+\mathcal{M}_3)/\sqrt{n}.
\end{equation} 
Similarly, setting $|h|_k=1$, $1\leq k\leq 6$, in the bounds of Theorems \ref{thm2.1}--\ref{thm2.3}, so that $h_{3,m}=\sum_{k=1}^3m^k{3\brace k}=m+3m^2+m^3$, $h_{4,m}=m+7m^2+6m^3+m^4$ and $h_{6,m}=m+31m^2+90m^3+65m^4+15m^5+m^6$, yields bounds in the $d_3$, $d_6$ and $d_4$ metrics, respectively. 

The Kolmogorov distance is an IPM induced by the class of test functions $\mathcal{H}_{\mathrm{K}}=\{\mathbf{1}_{\cdot\leq z}\,:\,z\in\mathbb{R}^m\}$. We can extract (typically sub-optimal order) Kolmogorov distance bounds from the bounds of Theorems \ref{thm2.1}--\ref{thm2.3univ} by applying bounds of \cite{gl22}. Consider first the univariate case. For $t\geq1$, recall that $Y_t=_df^{(t)}(0)Z^t/t!$, where $Z\sim \mathrm{N}(0,\sigma^2)$. Denote the density of $Y_t$ by $p_t$. Then one can readily check that there exists a constant $C>0$ such that $p_t(x)\leq C|x|^{1/t-1}$, for all $x\in\mathbb{R}$ (see, for example, \cite[p.\ 3112]{gaunt pn}). Thus, applying part (iii) of Proposition 2.1 of \cite{gl22} tells us that there exist constants $C'$ and $C''$ such that
\begin{align}\label{dk1}d_{\mathrm{K}}(T_{t,n},Y_t)&\leq C'\big(d_{\mathrm{W}}(T_{t,n},Y_t)\big)^{1/(1+t)}, \\ 
\label{dk2}d_{\mathrm{K}}(T_{t,n},Y_t)&\leq C''\big(d_{2}(T_{t,n},Y_t)\big)^{1/(1+2t)}
\end{align}
(with a more careful analysis, explicit constants can be given in place of $C'$ and $C''$). We observe that substituting the $O(n^{-1/2})$ bound (\ref{dw1}) into inequality (\ref{dk1}) yields a Kolmogorov distance bound of order $n^{-1/(2+2t)}$, whilst inserting the $O(n^{-1})$ bounds of Theorem \ref{thm2.2univ} and \ref{thm2.3univ} (with $\|h'\|=\|h''\|=1$) into inequality (\ref{dk2}) yields a Kolmogorov distance bound of order $n^{-1/(1+2t)}$.

In the general multivariate case with $t\geq1$ and $d>1$, we are not aware of bounds for the density of the limit random vector $\mathbf{Y}_t$, so at that level of generality we are unable to apply the bounds of \cite{gl22} to deduce Kolmogorov distance bounds. However, in the case $t=1$, corresponding to multivariate normal approximation, we can apply Proposition 2.6 of \cite{gl22} to obtain the following bound. Let $\sigma^2=\min_{1\leq j\leq d}\sigma_j^2$. Then, provided $d_3(\mathbf{T}_{1,n},\mathbf{Y}_1)\leq (2+\sqrt{2\log d})/(2\sigma^2)$, we have
\begin{align}\label{dk3}d_{\mathrm{K}}(\mathbf{T}_{1,n},\mathbf{Y}_1)\leq6.17\bigg(\frac{\sqrt{\log d}+\sqrt{2}}{\sigma^2}\bigg)^{3/4}\big(d_3(\mathbf{T}_{1,n},\mathbf{Y}_1)\big)^{1/4}.
\end{align}
Applying the $O(n^{-1/2})$ bound of Theorem \ref{thm2.1} to inequality (\ref{dk3}) yields a Kolmogorov distance bound of order $n^{-1/8}$. We observe that this Kolmogorov distance bound will tend to zero if $(\log d)^{3}d^{12}/n\rightarrow0$, which is the same condition up to a logarithmic factor under which the  bound of Theorem \ref{thm2.1} tends to zero when $t=1$.
\end{remark}

\subsection{The case of vanishing $(t+1)$-th order partial derivatives}\label{sec2.2}

Let $\mathbf{g}_{t,n}(\mathbf{w})=n^{t/2}(\mathbf{f}(n^{-1/2}\mathbf{w})-\mathbf{f}(\mathbf{0}))$ and let $\mathbf{W}=\sqrt{n}\overline{\mathbf{X}}$, so we have $\mathbf{T}_{t,n}=\mathbf{g}_{t,n}(\mathbf{W}).$ Also, let $\mathbf{Z}\sim\mathrm{N}_d(\mathbf{0},\Sigma)$. Suppose that all $(t+1)$-th order partial derivatives of $\mathbf{f}$ are equal to the zero vector $\mathbf{0}$,  in addition to the standing assumption from Section \ref{sec2.1} that  $\frac{\partial^{t}\mathbf{f}}{\prod_{j=1}^{t}\partial w_{i_j}}(\mathbf{0})\neq\mathbf{0}$  for some $1\leq i_1,\ldots,i_t\leq d$, with (in the case $t\geq2$) all lower order partial derivatives of $\mathbf{f}$ evaluated at the mean $\mathbf{0}$ being equal to the zero vector $\mathbf{0}$. In this case, 
$\mathbf{g}_{t,n}(\mathbf{Z})=_d\mathbf{Y}_t$ (this can be read off from the proof of Theorem \ref{thm2.1}). Therefore
	\begin{equation}\label{rhs}
		\Delta_h(\mathbf{T}_{t,n},\mathbf{Y}_t)=
		\Delta_h(\mathbf{g}_{t,n}(\mathbf{W}),\mathbf{g}_{t,n}(\mathbf{Z})),
	\end{equation}
and the quantity on the right-hand side of (\ref{rhs}) can  be bounded by applying the bounds of Theorems \ref{theoremsec21}--\ref{indep2}, below. The bounds of these theorems are used in our proofs of Theorems \ref{thm2.1}--\ref{thm2.3univ}. 
These bounds are also widely applicable and are of independent interest. 
For instance, in the multivariate case ($d,m\geq1$), an application of the bounds in this section may yield bounds with an improved dependence on the dimension $d$ than would result from applying the bounds of Section \ref{sec2.1}; see Example \ref{ex3.5} for an illustration.  


As in Section \ref{sec2.1}, we let  $\mathbf{X}_1,\ldots, \mathbf{X}_n$ be independent zero mean random vectors, with $\mathbf{X}_{i}=(X_{i,1},\ldots,X_{i,d})^\intercal$, $i=1,\ldots,n$. 
For $j=1,\ldots,d$, we let $W_j=n^{-1/2}\sum_{i=1}^nX_{ij}$ and denote $\mathbf{W}=(W_1,\ldots,W_d)^\intercal$.   Suppose that the covariance matrix $\Sigma$ of $\mathbf{W}$ is non-negative definite.  
We will assume that the derivatives of the function $\mathbf{g}:\mathbb{R}^d\rightarrow\mathbb{R}^m$ have polynomial growth. To this end, we suppose $\mathbf{g}\in C_{P}^{t,m}(\mathbb{R}^d)$ with $P(\mathbf{w})=A+B\sum_{i=1}^d|w_i|^{r}$, where $A,B\geq 0$ and $r\geq 0$.

The following Theorems \ref{theoremsec21}--\ref{indep2} generalise bounds of Theorem 3.1 of \cite{GauntSut} to vector-valued functions $\mathbf{g}:\mathbb{R}^d\rightarrow\mathbb{R}^m$ and random vectors whose components may be dependent. Theorem \ref{thmsec2} provides simple sufficient conditions under which $O(n^{-1})$ bounds can be obtained for smooth test functions.  
 

\begin{theorem}\label{theoremsec21}
Suppose that the above notations and assumptions prevail. Suppose also that $\mathbb{E}|X_{ij}|^{r+3}<\infty$ for all $i,j$, and that $\mathbf{g}\in C_P^{3,m}(\mathbb{R}^d)$. Suppose $n\geq8$. Then, for $h\in C_b^3(\mathbb{R}^m)$, 
\begin{align}\label{genbd1} \Delta_h(\mathbf{g}(\mathbf{W}),\mathbf{g}(\mathbf{Z}))&\leq \frac{d^2h_{3,m}}{2n^{3/2}}\sum_{i=1}^n\sum_{j,k=1}^d\Big[\Big(\frac{A}{d}+2^{r/2}\mu_{r,\sigma_k}B\Big)\mathbb{E}|X_{ij}|^3+B\mathbb{E}|X_{ij}|^{r+3}\nonumber\\
&\quad+2^{3r/2}B\mathbb{E}|X_{ij}|^3\mathbb{E}|W_k|^r\Big].
\end{align}
\end{theorem}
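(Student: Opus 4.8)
The plan is to run Stein's method for the multivariate normal distribution $\mathrm{N}_d(\mathbf{0},\Sigma)$ with the composite test function $\tilde h:=h\circ\mathbf{g}:\mathbb{R}^d\to\mathbb{R}$, thereby reducing the vector-valued problem to a scalar one. Writing the Stein operator as $\mathcal{A}f(\mathbf{w})=\sum_{j,k=1}^d\sigma_{jk}\frac{\partial^2 f}{\partial w_j\partial w_k}(\mathbf{w})-\sum_{j=1}^d w_j\frac{\partial f}{\partial w_j}(\mathbf{w})$ and letting $f=f_{\tilde h}$ solve $\mathcal{A}f=\tilde h-\mathbb{E}[\tilde h(\mathbf{Z})]$, we have $\Delta_h(\mathbf{g}(\mathbf{W}),\mathbf{g}(\mathbf{Z}))=|\mathbb{E}[\mathcal{A}f(\mathbf{W})]|$, so the whole task reduces to bounding this single expectation. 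The polynomial growth hypothesis $\mathbf{g}\in C_P^{3,m}(\mathbb{R}^d)$ guarantees that $\mathbb{E}[\tilde h(\mathbf{Z})]$ is finite and that $f$ has the smoothness needed below.

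First I would exploit the independence of $\mathbf{W}=n^{-1/2}\sum_{i=1}^n\mathbf{X}_i$ through a leave-one-out Taylor expansion. Setting $\mathbf{W}^{(i)}=\mathbf{W}-n^{-1/2}\mathbf{X}_i$, so that $\mathbf{W}^{(i)}$ is independent of $\mathbf{X}_i$, I write $\sum_j W_j\frac{\partial f}{\partial w_j}(\mathbf{W})=n^{-1/2}\sum_{i,j}X_{ij}\frac{\partial f}{\partial w_j}(\mathbf{W})$ and Taylor expand $\frac{\partial f}{\partial w_j}(\mathbf{W})$ about $\mathbf{W}^{(i)}$ to second order. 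The mean-zero assumption kills the zeroth-order term since $\mathbb{E}[X_{ij}\frac{\partial f}{\partial w_j}(\mathbf{W}^{(i)})]=0$ by independence; the first-order term factorises as $\mathbb{E}[X_{ij}X_{ik}]\,\mathbb{E}[\frac{\partial^2 f}{\partial w_j\partial w_k}(\mathbf{W}^{(i)})]$, and the covariance-matching identity $n^{-1}\sum_{i=1}^n\mathbb{E}[X_{ij}X_{ik}]=\sigma_{jk}$ cancels it against the $\sum_{j,k}\sigma_{jk}\frac{\partial^2 f}{\partial w_j\partial w_k}(\mathbf{W})$ term in $\mathcal{A}f$, up to the discrepancy between evaluating the second derivative at $\mathbf{W}^{(i)}$ versus $\mathbf{W}$. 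Crucially, since only the vectors $\mathbf{X}_1,\dots,\mathbf{X}_n$ (and not the coordinates within a fixed $\mathbf{X}_i$) are assumed independent, this step retains the full mixed moments $\mathbb{E}[X_{ij}X_{ik}]$ intact, which is exactly the generalisation needed for dependent components. What remains are two families of third-order terms, namely the Taylor remainder $\tfrac12 n^{-3/2}\sum_{i,j,k,l}\mathbb{E}[X_{ij}X_{ik}X_{il}\,\partial^3_{jkl}f(\cdot)]$ and the $\mathbf{W}^{(i)}\!\to\!\mathbf{W}$ replacement term $n^{-3/2}\sum_{i,j,k,l}\mathbb{E}[X_{ij}X_{ik}]\,\mathbb{E}[X_{il}\,\partial^3_{jkl}f(\cdot)]$, each of size $n^{-3/2}\times n=n^{-1/2}$ after summation; this accounts for the $n^{-3/2}$ prefactor and the explicit $\sum_{i=1}^n$ in (\ref{genbd1}).

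The main obstacle is the uniform control of the third derivatives $\partial^3_{jkl}f$ of the Stein solution for the composite test function $h\circ\mathbf{g}$ with vector-valued $\mathbf{g}$. Here I would invoke the bounds of \cite{gaunt normal,GauntSut} on the derivatives of the solution of the multivariate normal Stein equation, extended to vector-valued $\mathbf{g}$ via Lemma \ref{bell lem}: the Fa\`a di Bruno chain rule applied to $h\circ\mathbf{g}$ produces precisely the combinatorial factor $h_{3,m}=\sum_{k=1}^3 m^k{3\brace k}|h|_k$, while $\mathbf{g}\in C_P^{3,m}(\mathbb{R}^d)$ bounds every product of first, second and third derivatives of the $g_l$ by $P$, since $|\partial g_l|^{3}\le P$, $|\partial^2 g_l|^{3/2}\le P$ and $|\partial^3 g_l|\le P$. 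Because the Stein solution is a Gaussian-smoothed average of $\tilde h$, the growth $P(\mathbf{w})=A+B\sum_{i=1}^d|w_i|^r$ gets evaluated at mixtures of $\mathbf{w}$ and $\mathbf{Z}$; taking expectations and applying a power-mean inequality splits the coordinate growth into a Gaussian-moment piece (yielding the $2^{r/2}\mu_{r,\sigma_k}B$ terms), a piece that factorises through independence as $\mathbb{E}|W_k|^r$ (yielding the $2^{3r/2}B$ terms), and a self-contribution of $\mathbf{X}_i$ to the growth (raising the moment to $\mathbb{E}|X_{ij}|^{r+3}$), matching the three summands of (\ref{genbd1}). The internal coordinate sum $B\sum_{i=1}^d|w_i|^r$ in $P$ is what supplies the growth-direction index $k$ and forces the constant $A$ to appear normalised as $A/d$.

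Finally I would assemble the estimate by substituting the derivative bound into the two remainder families, bounding the triple product moments through $|X_{ij}X_{ik}X_{il}|\le\frac13(|X_{ij}|^3+|X_{ik}|^3+|X_{il}|^3)$ and the factored moments by H\"older's inequality, and carrying out the index bookkeeping so that the triple sum over $(j,k,l)$ collapses to the double sum $\sum_{j,k=1}^d$, the third index being summed out to leave the overall factor $d^2$ recorded in (\ref{genbd1}). I expect the genuinely delicate points to be, first, the derivative control for the composite test function—that is, the role of Lemma \ref{bell lem}—and second, the careful tracking of the polynomial-growth moments through the Gaussian smoothing so that no spurious powers of $d$ or $n$ are introduced; the remaining Taylor-expansion bookkeeping is inherited from the proof of Theorem 3.1 of \cite{GauntSut}.
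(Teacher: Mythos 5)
Your proposal follows essentially the same route as the paper's proof: the Stein equation with composite test function $h(\mathbf{g}(\cdot))$, the leave-one-out Taylor expansion about $\mathbf{W}^{(i)}$ producing exactly the two third-order remainder families you describe, and the control of $\partial^3_{jkl}f$ via the derivative bounds of \cite{gaunt normal,GauntSut} extended to vector-valued $\mathbf{g}$ through Lemma \ref{bell lem} (packaged in the paper as Lemma \ref{cbwbshc}). Your final moment bookkeeping via the weighted power-mean bound on $|X_{ij}X_{ik}X_{il}|$ is the same Young's-inequality-for-products step the paper uses, so the argument is correct and matches the paper's in all essentials.
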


\begin{theorem}\label{thmsec2}Let $X_{ij}$, $i=1,\ldots,n$, $j=1,\ldots,d$, be defined as in Theorem \ref{theoremsec21}, but with the stronger assumption that $\mathbb{E}|X_{ij}|^{r+4}<\infty$ for all $i,j$. 

\vspace{2mm}

\noindent{(i)} Suppose $\mathbf{g}\in C_P^{6,m}(\mathbb{R}^d)$ is an even function, and that $n\geq12$.  Then, for $h\in C_b^6(\mathbb{R}^m)$,
\begin{align}\label{boundthm2} \Delta_h(\mathbf{g}(\mathbf{W}),\mathbf{g}(\mathbf{Z}))\leq \frac{1}{n}\Big(\frac{13}{10}h_{4,m}K_1+h_{6,m}K_2\Big).
\end{align}

\noindent{(ii)} Suppose $\mathbf{g}\in C_P^{4,m}(\mathbb{R}^d)$ (not necessarily an even function) and that $\mathbb{E}[X_{ij}X_{ik}X_{il}]=0$ for all $1\leq i\leq n$ and all $1\leq j,k,l\leq d$. Suppose $n\geq8$. Then, for $h\in C_b^4(\mathbb{R}^m)$,
\begin{align}\label{boundtheorem} \Delta_h(\mathbf{g}(\mathbf{W}),\mathbf{g}(\mathbf{Z}))\leq \frac{h_{4,m}K_1}{n}.
\end{align}
Here
\begin{align*}K_1&=\frac{5d^3}{12n}\sum_{i=1}^n\sum_{j,k=1}^d\Big[\Big(\frac{A}{d}+2^{r/2}\mu_{r,\sigma_k}B\Big)\mathbb{E}[X_{ij}^4]+B\mathbb{E}|X_{ij}|^{r+4}+2^{3r/2}B\mathbb{E}[X_{ij}^4]\mathbb{E}|W_k|^r\Big], \\
K_2&=\frac{d^2}{24n^2}\sum_{i,\alpha=1}^n\sum_{j,k,l,a,q=1}^d|\mathbb{E}[X_{i j}X_{i k}X_{i l}]|\Big[\Big(\frac{A}{d}+ 2\cdot3^{r/2}\mu_{r,\sigma_q}B\Big)\mathbb{E}|X_{\alpha a}|^3\\
&\quad+B\mathbb{E}|X_{\alpha a}|^{r+3}+12^{r/2}B\mathbb{E}|X_{\alpha a}|^3\mathbb{E}|W_q|^r\Big].
\end{align*}
\end{theorem}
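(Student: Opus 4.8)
The plan is to prove Theorem \ref{thmsec2} by the same Stein's method machinery that underlies Theorem \ref{theoremsec21}, but carrying the Taylor expansion of the relevant quantity one order further so as to exploit the vanishing of the odd-order contributions. The central object is the solution $\phi_h$ of the multivariate normal Stein equation with test function $h\circ\mathbf{g}$; by the results of \cite{gaunt normal,GauntSut} (and their generalisation to vector-valued $\mathbf{g}$ via Lemma \ref{bell lem}), one has explicit bounds on the partial derivatives of $\phi_h$ in terms of $h_{p,m}$, the growth function $P$, and the moments of $\mathbf{Z}$. The starting identity is
\begin{equation*}
\Delta_h(\mathbf{g}(\mathbf{W}),\mathbf{g}(\mathbf{Z}))=\bigg|\mathbb{E}\bigg[\sum_{j=1}^d\sigma_{jj}\,\partial_j^2\phi_h(\mathbf{W})+\sum_{j\neq k}\sigma_{jk}\partial_{jk}\phi_h(\mathbf{W})-\sum_{j=1}^dW_j\,\partial_j\phi_h(\mathbf{W})\bigg]\bigg|,
\end{equation*}
which, after writing $W_j=n^{-1/2}\sum_iX_{ij}$ and inserting the leave-one-out decomposition $\mathbf{W}=\mathbf{W}^{(i)}+n^{-1/2}\mathbf{X}_i$, reduces the problem to a Taylor expansion of $\partial_j\phi_h(\mathbf{W})$ in the increment $n^{-1/2}\mathbf{X}_i$ around $\mathbf{W}^{(i)}$.

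First I would carry out this Taylor expansion to one higher order than in the proof of Theorem \ref{theoremsec21}: expand $\partial_j\phi_h(\mathbf{W})$ to third order in $n^{-1/2}\mathbf{X}_i$ with integral remainder. The key point is that the second-order term of this expansion, when summed against $X_{ij}$ and combined with the $\sigma_{jj}\partial_j^2\phi_h$ and $\sigma_{jk}\partial_{jk}\phi_h$ terms, cancels exactly because $\mathbb{E}[X_{ij}X_{ik}]=\sigma_{jk}$ and by independence of $\mathbf{X}_i$ from $\mathbf{W}^{(i)}$. What remains are a third-order term and a fourth-order remainder. In case (ii), the third-order term carries a factor $\mathbb{E}[X_{ij}X_{ik}X_{il}]$, which vanishes by hypothesis; hence only the fourth-order remainder survives, and bounding it using the fourth-derivative estimates on $\phi_h$ (which bring in $h_{4,m}$) together with $\mathbb{E}|X_{ij}|^{r+4}<\infty$ gives the single term $h_{4,m}K_1/n$. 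In case (i), where $\mathbf{g}$ is even rather than having vanishing third mixed moments, the third-order term does not vanish pointwise, so I would instead go one further order, Taylor-expanding to fourth order and isolating the genuine $O(n^{-1})$ contribution: the evenness of $\mathbf{g}$ forces the odd-order derivatives of $\phi_h$ to have a parity that kills the net third-order contribution after taking the expectation over the symmetric-under-sign-flip structure, leaving a fourth-order piece (contributing $\tfrac{13}{10}h_{4,m}K_1$) and a genuine fifth/sixth-order remainder (contributing $h_{6,m}K_2$, which carries the characteristic double sum $\sum_{i,\alpha}$ and the factor $|\mathbb{E}[X_{ij}X_{ik}X_{il}]|$ from the surviving third-moment cross terms).

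The bookkeeping is where the explicit constants $5/12$, $13/10$, $1/24$ and the dimension powers $d^3$, $d^2$ in $K_1,K_2$ are produced: each derivative $\partial_{i_1\cdots i_p}\phi_h$ is controlled by a bound of the form (constant)$\cdot h_{p,m}\cdot(\text{moment of }\mathbf{Z})\cdot P(\cdot)$, and summing over the free indices $j,k,l,a,q\in\{1,\ldots,d\}$ generates the stated powers of $d$, while $P(\mathbf{w})=A+B\sum_i|w_i|^r$ splits each term into an $A/d$-part and a $B$-part, producing the three summands in each of $K_1,K_2$. The Gaussian moment factors $2^{r/2}\mu_{r,\sigma_k}$, $2^{3r/2}$, $2\cdot 3^{r/2}\mu_{r,\sigma_q}$ and $12^{r/2}$ arise from bounding $P(\mathbf{W}^{(i)}+\theta n^{-1/2}\mathbf{X}_i)$ along the Taylor path and from replacing $\mathbb{E}|W_k|^r$-type quantities via the crude inequality $|a+b|^r\leq 2^{r}(|a|^r+|b|^r)$ (and its three-term analogue for the double-sum remainder). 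I expect the main obstacle to be the parity argument in case (i): showing precisely that the third-order Taylor term integrates to zero requires pairing the evenness of $\mathbf{g}$ (hence of $\phi_h$, up to the action of the Stein operator) with a symmetrisation over the increment $\mathbf{X}_i$, and keeping track of which mixed third moments genuinely survive into $K_2$ rather than cancelling; this is the step that forces the higher differentiability ($\mathbf{g}\in C_P^{6,m}$, $h\in C_b^6$) and the appearance of the sixth-order seminorm $h_{6,m}$, and it is where the derivative bounds from Lemma \ref{bell lem} and \cite{GauntSut} must be applied most carefully.
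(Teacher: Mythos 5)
Your framework and your treatment of part (ii) match the paper's: the one-step Taylor expansion of $\partial f/\partial w_j(\mathbf{W})$ about $\mathbf{W}^{(i)}$ carried to third order produces a term proportional to $\mathbb{E}[X_{ij}X_{ik}X_{il}]$ plus fourth-order remainders, and when the third mixed moments vanish only the remainders survive, giving $h_{4,m}K_1/n$. The gap is in part (i). After the expansion, the surviving third-order contribution is
\begin{equation*}
\frac{1}{2n^{3/2}}\sum_{i=1}^n\sum_{j,k,l=1}^d\mathbb{E}[X_{ij}X_{ik}X_{il}]\,\mathbb{E}\bigg[\frac{\partial^3 f}{\partial w_j\partial w_k\partial w_l}(\mathbf{W})\bigg],
\end{equation*}
and this does \emph{not} vanish by any parity or symmetrisation argument: the inner expectation is over $\mathbf{W}$, which is not symmetric under sign flip, so evenness of $\mathbf{g}$ does not kill it. A priori this term is only $O(n^{-1/2})$, since the sum over $i$ of the third moments is $O(n)$. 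Taylor-expanding the Stein solution to fourth or higher order, as you propose, does nothing to this term---it only improves the order of the remainder, while the third-order term itself stays put.

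The idea you are missing is an iterated application of Stein's method. Because $\mathbf{g}$ is even, the solution $f$ of the Stein equation is even, hence $\partial^3 f/\partial w_j\partial w_k\partial w_l$ is odd and $\mathbb{E}[\partial^3 f(\mathbf{Z})]=0$. The paper therefore treats $\partial^3 f$ as a new test function and introduces the auxiliary Stein equation (\ref{234multinor}) with solution $\psi_{jkl}$; then $\mathbb{E}[\frac{\partial^3 f}{\partial w_j\partial w_k\partial w_l}(\mathbf{W})]=\mathbb{E}[\nabla^\intercal\Sigma\nabla\psi_{jkl}(\mathbf{W})-\mathbf{W}^\intercal\nabla\psi_{jkl}(\mathbf{W})]$, and a second one-step Taylor argument shows this is $O(n^{-1/2})$. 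Multiplying by the $n^{-1/2}$ already present yields the required $O(n^{-1})$ rate; the bound on $\partial^3\psi_{jkl}$ (inequality (\ref{cbhsxx6}) of Lemma \ref{cbwbshc}) is what brings in $h_{6,m}$, the hypotheses $\mathbf{g}\in C_P^{6,m}(\mathbb{R}^d)$ and $h\in C_b^6(\mathbb{R}^m)$, and the second summation index $\alpha$ in $K_2$. Your guess at the final shape of $K_2$ is right, but without the second Stein equation there is no route from your expansion to the $O(n^{-1})$ rate in part (i).
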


In the univariate case ($d=m=1$) we can obtain bounds that hold under weaker assumptions on the functions $g$ and $h$. We simplify notation, writing $W=n^{-1/2}\sum_{i=1}^nX_i$, where $X_1,\ldots,X_n$ are independent random variables such that $\mathbb{E}[X_i]=0$, $i=1,\ldots,n$, and $\mathbb{E}[W^2]=\sigma^2\in(0,\infty)$. The dominating function also takes the simpler form $P(w)=A+B|w|^r$.
\begin{theorem} \label{indep1}Let $X_1,\ldots,X_n$ be defined as above and suppose $\mathbb{E}|X_i|^{r+3}<\infty$, $1\leq i\leq n$. Suppose also that $g\in C_P^{1,1}(\mathbb{R})$, and $n\geq8$. Then, for $h\in C_b^{1}(\mathbb{R})$,
	\begin{align}
		\Delta_h(g(W),g(Z))&\leq\frac{3\|h'\|}{2\sigma^2n^{3/2}}\sum_{i=1}^{n}\big[
	(A\alpha_r+2^{r/2}B\gamma_{r,\sigma})\mathbb{E}|X_i|^3\nonumber\\
\label{a2bound}	&\quad+2^{3r/2}B\beta_r\mathbb{E}|X_i|^3\mathbb{E}|W|^r+B\beta_r\mathbb{E}|X_i|^{r+3}\big],
	\end{align}
where the constants $\alpha_r,\beta_r,\gamma_{r,\sigma}$, $r\geq0$, are defined as in the statement of Theorem \ref{thm2.1univ}.
\end{theorem}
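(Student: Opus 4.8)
The plan is to use Stein's method for the normal distribution. Since $Z\sim\mathrm{N}(0,\sigma^2)$, I work with the Stein equation $\sigma^2 f'(w)-wf(w)=\tilde h(w)-\mathbb{E}[\tilde h(Z)]$, where the test function is the composition $\tilde h=h\circ g$. Evaluating at $W$ and taking expectations gives $\Delta_h(g(W),g(Z))=|\mathbb{E}[\sigma^2 f'(W)-Wf(W)]|$, so the whole task reduces to bounding this expectation. The crucial preliminary input is a polynomial-growth bound on the derivatives of the solution $f$. Because $h\in C^1_b(\mathbb{R})$ and $g\in C_P^{1,1}(\mathbb{R})$ with $P(w)=A+B|w|^r$, the chain rule gives $|\tilde h'(w)|=|h'(g(w))g'(w)|\leq\|h'\|(A+B|w|^r)$, so $\tilde h$ is a (generally unbounded) test function with polynomial-growth derivative. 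From the integral representation of $f$ --- equivalently, from the univariate specialisation of the solution bounds of \cite{GauntSut} --- one obtains a bound of the form $|f''(w)|\leq\frac{\|h'\|}{\sigma^2}(\text{const}+\text{const}\cdot|w|^r)$, in which the constant parts carry the $A\alpha_r$ and the Gaussian moment $B\gamma_{r,\sigma}$, while the polynomial part carries $B\beta_r$; the dichotomy $0\leq r\leq1$ versus $r>1$ reflects the two regimes in which these solution bounds are established. Securing these explicit derivative bounds, with the precise constants $\alpha_r,\beta_r,\gamma_{r,\sigma}$, is the technical heart of the argument.

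With the solution bounds in hand, I exploit the sum structure $W=n^{-1/2}\sum_{i=1}^nX_i$ via a leave-one-out decomposition. Write $W^{(i)}=W-n^{-1/2}X_i=n^{-1/2}\sum_{j\neq i}X_j$, which is independent of $X_i$, and expand $\mathbb{E}[Wf(W)]=n^{-1/2}\sum_i\mathbb{E}[X_if(W)]$ by Taylor expanding $f(W)$ about $W^{(i)}$ to second order. Using $\mathbb{E}[X_i]=0$ kills the zeroth-order term, and the independence of $X_i$ and $W^{(i)}$ turns the first-order term into $n^{-1}\sum_i\mathbb{E}[X_i^2]\mathbb{E}[f'(W^{(i)})]$. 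Since $\sigma^2=\mathbb{E}[W^2]=n^{-1}\sum_i\mathbb{E}[X_i^2]$, subtracting gives
\begin{align*}
\sigma^2\mathbb{E}[f'(W)]-\mathbb{E}[Wf(W)]&=n^{-1}\sum_{i=1}^n\mathbb{E}[X_i^2]\big(\mathbb{E}[f'(W)]-\mathbb{E}[f'(W^{(i)})]\big)\\
&\quad-\tfrac12 n^{-3/2}\sum_{i=1}^n\mathbb{E}[X_i^3 f''(\zeta_i)],
\end{align*}
where each $\zeta_i$ lies between $W^{(i)}$ and $W$. The second sum is already $O(n^{-3/2})$ once the bound on $|f''|$ is inserted; for the first sum I apply the mean value theorem once more to write $\mathbb{E}[f'(W)]-\mathbb{E}[f'(W^{(i)})]=n^{-1/2}\mathbb{E}[X_i f''(\tilde\zeta_i)]$, which again produces $f''$ together with an extra factor $n^{-1/2}$, so both contributions are of the same order.

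It then remains to carry out the moment bookkeeping. After substituting the polynomial bound on $|f''|$, each argument satisfies $|\zeta_i|,|\tilde\zeta_i|\leq|W^{(i)}|+n^{-1/2}|X_i|$, so I bound $|\zeta_i|^r$ by separating the $W^{(i)}$ and $X_i$ contributions; this separation, together with the Gaussian moments $\mu_{r,\sigma}$ carried by the solution bound and the conversion of $\mathbb{E}|W^{(i)}|^r$ into $\mathbb{E}|W|^r$, is what produces the powers $2^{r/2}$, $2^{3r/2}$ and the three terms $\gamma_{r,\sigma}\mathbb{E}|X_i|^3$, $\beta_r\mathbb{E}|X_i|^3\mathbb{E}|W|^r$ and $\beta_r\mathbb{E}|X_i|^{r+3}$. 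The independence of $X_i$ and $W^{(i)}$ lets the mixed expectations factor, and Hölder's inequality in the form $\mathbb{E}[X_i^2]\mathbb{E}|X_i|\leq\mathbb{E}|X_i|^3$ merges the first sum with the second-order remainder, so that their coefficients $1$ and $\tfrac12$ combine into the prefactor $\tfrac{3}{2\sigma^2}$. The hypothesis $n\geq8$ enters only in controlling the conversion between moments of $W^{(i)}$ and of $W$.

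The main obstacle is the first step: establishing the sharp polynomial-growth bounds on the Stein solution derivative $f''$ for the composite, unbounded test function $h\circ g$, with the exact constants $\alpha_r,\beta_r,\gamma_{r,\sigma}$ and the correct $r=1$ threshold. Once these bounds are available, the remainder of the proof is a structured but essentially routine Taylor-expansion-and-moment computation, and the dependence on the summands enters only through $\mathbb{E}|X_i|^3$, $\mathbb{E}|X_i|^{r+3}$ and $\mathbb{E}|W|^r$, exactly as in the stated bound \eqref{a2bound}.
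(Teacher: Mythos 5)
Your proposal follows essentially the same route as the paper: the paper bounds $\Delta_h(g(W),g(Z))$ via the Stein equation with composite test function $h(g(\cdot))$, performs exactly the leave-one-out Taylor expansion you describe (it is the $d=m=1$ specialisation of the $|R_1|+|R_2|$ decomposition in the proof of Theorem \ref{theoremsec21}), and then inserts the polynomial-growth bounds on the relevant derivative of the Stein solution from Lemma 3.6 of \cite{GauntSut}, which is where $\alpha_r,\beta_r,\gamma_{r,\sigma}$, the $r\leq1$ versus $r>1$ dichotomy and the $1/\sigma^2$ originate, with $\mathbb{E}[X_i^2]\mathbb{E}|X_i|\leq\mathbb{E}|X_i|^3$ merging the two remainders into the factor $3/2$. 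The only quibble is that $n\geq8$ is actually used to absorb the factor $2^{3r/2}\theta^r n^{-r/2}$ multiplying the $\mathbb{E}|X_i|^{r+3}$ contribution (see the proof of Lemma \ref{cbwbshc}), rather than to convert $\mathbb{E}|W^{(i)}|^r$ into $\mathbb{E}|W|^r$, which holds for all $n$.
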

\begin{theorem}\label{indep2}Let $X_1,\ldots,X_n$ be defined as above and suppose $\mathbb{E}|X_i|^{r+4}<\infty$, $1\leq i\leq n$. Suppose also that $g\in C_P^{2,1}(\mathbb{R})$.

	\vspace{2mm}
	
	\noindent{(i)} Further suppose that $g$ is an even function, and $n\geq12$. Then, for $h\in C^2_b(\mathbb{R})$,
	\begin{align}
			\Delta_h(g(W),g(Z))\leq\frac{1}{n}(\|h'\|+\|h''\|)\Big(\frac{13}{10}K_3+K_4\Big).\label{a.71}
	\end{align}
\noindent{(ii)} Further suppose that $\mathbb{E}[X_i^3]=0$ for all $1\leq i\leq n$, and $n\geq8$ ($g$ is not necessarily an even function). Then, for $h\in C^2_b(\mathbb{R})$,
	\begin{align}
	\Delta_h(g(W),g(Z))&\leq\frac{1}{n}(\|h'\|+\|h''\|)K_3.\label{a.72}
\end{align}
Here, for $\tilde\alpha_r,\tilde\beta_r,\tilde\gamma_{r,\sigma}$, $r\geq0$, are defined as in the statement of Theorem \ref{thm2.2univ},
\begin{align*}
		K_3&=\frac{5}{3\sigma^2n}\sum_{i=1}^{n}\big[
	(A\alpha_r+2^{r/2}B\gamma_{r,\sigma})\mathbb{E}[X_i^4]+2^{3r/2}B\beta_r\mathbb{E}[X_i^4]\mathbb{E}|W|^r+B\beta_r\mathbb{E}|X_i|^{r+4}\big],\nonumber\\
	K_4&=\frac{3}{4\sigma^4n^2}\sum_{i,\alpha=1}^{n}|\mathbb{E}[X_i^3]|\big[
	(A\tilde\alpha_r+3^{r/2}B\tilde\gamma_{r,\sigma})\mathbb{E}|X_\alpha|^3+12^{r/2}B\tilde\beta_r\mathbb{E}|X_\alpha|^3\mathbb{E}|W|^r\\
	&\quad+B\tilde\beta_r\mathbb{E}|X_\alpha|^{r+3}\big].
\end{align*}
\end{theorem}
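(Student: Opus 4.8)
The plan is to prove both parts by Stein's method, taking $H=h\circ g$ as the test function and pushing the leave-one-out expansion that underlies Theorem \ref{indep1} to one higher order. First I would introduce the solution $\psi$ of the $\mathrm{N}(0,\sigma^2)$ Stein equation $\sigma^2\psi'(w)-w\psi(w)=H(w)-\mathbb{E}[H(Z)]$, so that $\Delta_h(g(W),g(Z))=|\mathbb{E}[\sigma^2\psi'(W)-W\psi(W)]|$. Since $g\in C_P^{2,1}(\mathbb{R})$ and $h\in C_b^2(\mathbb{R})$, the composition $H$ lies in $C^2$ with polynomial growth, and the one-dimensional specialisation of the solution bounds of \cite{GauntSut} (the mechanism of Lemma \ref{bell lem}) controls $\psi,\psi',\psi'',\psi'''$ by expressions of the form $(\|h'\|+\|h''\|)(A'+B'|w|^{r})$. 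These polynomial factors are exactly what produce the quantities $\mu_{r,\sigma}$, $\mathbb{E}|W|^r$ and $\mathbb{E}|X_\alpha|^{r+\cdot}$ appearing in $K_3$ and $K_4$.

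Next I would carry out the leave-one-out Taylor expansion. Writing $W=\sum_i n^{-1/2}X_i$ and $W^{(i)}=W-n^{-1/2}X_i$, independence together with $\mathbb{E}[X_i]=0$ and $\sigma^2=\sum_i n^{-1}\mathbb{E}[X_i^2]$ annihilates the zeroth- and first-order contributions, leaving $\mathbb{E}[\sigma^2\psi'(W)-W\psi(W)]$ equal to the sum of three pieces: a second-difference term $\tfrac12\sum_i n^{-2}\mathbb{E}[X_i^2]\mathbb{E}[X_i^2\psi'''(\cdot)]$, a fourth-order remainder $-\tfrac16\sum_i n^{-2}\mathbb{E}[X_i^4\psi'''(\cdot)]$ (both genuinely $O(n^{-1})$ and assembling into the $K_3$ bracket), and the critical term $-\tfrac12\sum_i n^{-3/2}\mathbb{E}[X_i^3]\mathbb{E}[\psi''(W^{(i)})]$, which is the only contribution of apparent order $n^{-1/2}$.

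The two hypotheses enter precisely to tame the critical term. Under the assumption of part (ii), $\mathbb{E}[X_i^3]=0$ makes it vanish identically, so collecting the two remainders with the explicit solution bounds yields $\tfrac1n(\|h'\|+\|h''\|)K_3$. For part (i), evenness of $g$ makes $H$ even and hence forces $\psi$ to be odd, so $\psi''$ is odd and $\mathbb{E}[\psi''(Z^{(i)})]=0$ for the symmetric Gaussian $Z^{(i)}\sim\mathrm{N}(0,\sigma_{(i)}^2)$ with $\sigma_{(i)}^2=\sum_{\alpha\neq i}n^{-1}\mathbb{E}[X_\alpha^2]$. Thus $\mathbb{E}[\psi''(W^{(i)})]=\mathbb{E}[\psi''(W^{(i)})]-\mathbb{E}[\psi''(Z^{(i)})]$ is itself a smooth normal-approximation error, which I would bound by re-running the leave-one-out expansion underlying Theorem \ref{indep1} with the (polynomially growing) test function $\psi''$; its leading contribution is governed by the third absolute moments $\mathbb{E}|X_\alpha|^3$ and involves only $\|\psi'''\|$, which is available. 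Feeding this $O(n^{-1/2})$ bound back into the critical term produces the double sum $\sum_{i,\alpha}|\mathbb{E}[X_i^3]|(\cdots)$, i.e.\ the term $K_4$, and collecting constants gives $\tfrac1n(\|h'\|+\|h''\|)(\tfrac{13}{10}K_3+K_4)$.

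The main obstacle is the even case: extracting the extra factor $n^{-1/2}$ from $\mathbb{E}[\psi''(W^{(i)})]$ while only $h\in C_b^2$ is assumed (so $\psi$ is merely $C^3$). This forces the nested argument to be run with inner test function $\psi''$, whose derivative bounds grow polynomially through $g$, and it must be controlled using $\|\psi'''\|$ alone rather than any higher derivative; keeping track of the attendant polynomial-growth factors and assembling the explicit constants (the $\tfrac{13}{10}$, the $\sigma^{-4}$, and the constants $\tilde\alpha_r,\tilde\beta_r,\tilde\gamma_{r,\sigma}$ of Theorem \ref{thm2.2univ}) is the delicate bookkeeping. The restrictions $n\geq12$ in part (i) and $n\geq8$ in part (ii) are used only to absorb the $O(n^{-1})$ remainders and to bound moments such as $\mathbb{E}|W|^r$ uniformly.
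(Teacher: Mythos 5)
Your proposal is correct and follows essentially the same route as the paper: the paper's proof is exactly the $d=m=1$ specialisation of the argument for Theorem \ref{thmsec2} (leave-one-out Taylor expansion of the Stein operator applied to the solution with test function $h\circ g$, with the third-moment term killed outright in part (ii) and, in part (i), converted into an $O(n^{-1/2})$ quantity via a second, nested Stein equation for the solution's higher derivative, which is centred because evenness of $g$ makes the solution odd), with the expectations then bounded by the univariate solution estimates of Lemma 3.6 of \cite{GauntSut} that supply the constants $\alpha_r,\beta_r,\gamma_{r,\sigma}$ and $\tilde\alpha_r,\tilde\beta_r,\tilde\gamma_{r,\sigma}$. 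The only cosmetic difference is that you compare $W^{(i)}$ with $Z^{(i)}$ directly, whereas the paper first trades $W^{(i)}$ for $W$ (at the cost of a further remainder absorbed into the $13/10$ factor) so that a single auxiliary Stein equation with target $Z\sim\mathrm{N}(0,\sigma^2)$ suffices.
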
	

\begin{remark}The bound (\ref{genbd1}) of Theorem \ref{theoremsec21} tends to zero if we allow the dimensions $d$ and $m$ to grow with $n$ provided $n/(d^8m^6)\rightarrow\infty$, whilst the bounds (\ref{boundthm2}) and (\ref{boundtheorem}) of Theorem \ref{thmsec2} tend to zero if $n/(d^{7}m^6)\rightarrow\infty$ and $n/(d^{5}m^4)\rightarrow\infty$, respectively.
\end{remark}

\section{Examples}\label{sec3}
In this section, we provide illustrative examples of the application of the general bounds of Section \ref{sec2} in concrete settings. Examples \ref{ex3.1}--\ref{ex3.3} highlight situations in which a change of parameter value can lead to different limit distributions and bounds with a faster $O(n^{-1})$ rate of convergence. Example \ref{ex3.4} provides a simple application of our bounds for vector-valued statistics. In Example \ref{ex3.5}, we obtain optimal order $n^{-1}$ bounds for the chi-square approximation of a family of rank-based statistics introduced by \cite{sen}, and as a special case recover an independent derivation of an order $n^{-1}$ bound for Friedman's statistic. In  Example \ref{ex3.6}, we also provide an independent derivation of an optimal order $n^{-1}$ bound for the chi-square approximation of Pearson's statistic. In order to meet a mild requirement of the theorems from Section \ref{sec2}, we assume that $n\geq12$ in all examples.

\begin{example}[Estimating Bernoulli variance]\label{ex3.1} Consider i.i.d.\ Bernoulli data $X_1,\ldots,X_n$ with parameter $p$. The maximum likelihood estimator (MLE) of $p$ is $\overline{X}=n^{-1}\sum_{i=1}^nX_i$, so the MLE of the variance $p(1-p)$ is given by $\overline{X}(1-\overline{X})$. For $i=1,\ldots,n$, let $V_i=X_i-p$, so that $V_1,\ldots,V_n$ are i.i.d.\ random variables with zero mean. Denote $\overline{V}=n^{-1}\sum_{i=1}^nV_i$. Define $f(v)=(p+v)(1-p-v)$, so that $f(\overline{V})=\overline{X}(1-\overline{X})$.

First, suppose $p\not=1/2$.  We have $f'(0)=1-2p\not=0$, hence we can apply Theorem \ref{thm2.1univ} with $t=1$. Moreover, $|f'(v)|\leq |1-2p|+2|v|\leq 2(1+|v|)$ and $|f''(v)|=2=1+|v|^0$ for all $v\in\mathbb{R}$, so we can take $A_1=2$, $A_2=1$, $r_1=1$ and $r_2=0$. Let $T_{1,n}=\sqrt{n}(\overline{X}(1-\overline{X})-p(1-p))$ and $Y_1\sim \mathrm{N}(0,\sigma_p^2)$, where $\sigma_p^2=p(1-p)(1-2p)^2$. We have that $\mathbb{E}|V_i|^m\leq p(1-p)$ for $m\geq2$, and $\mathbb{E}|W|\leq(\mathbb{E}[W^2])^{1/2}=\sigma=\sqrt{p(1-p)}$ by the Cauchy-Schwarz inequality. We also have that $\mu_{1,\sigma}=\sqrt{2/\pi}\sqrt{p(1-p)}$ and $\mu_{2,\sigma}=p(1-p)$. Now, applying the bound (\ref{thm2.4bd}) yields, $h\in C_b^1(\mathbb{R})$,
\begin{align}\label{addvfdb}\Delta_h(T_{1,n},Y_1) \leq89\|h'\|/\sqrt{n},
\end{align}
where to obtain a compact final bound we used the basic inequality $p(1-p)\leq1/4$ for $0<p<1$, and rounded up the final numerical constant to the nearest integer. Now define $T_{1,n}'=T_{1,n}/\sigma_p$ and $Y_1'\sim N(0,1)$, so that $\mathrm{Var}(T_{1,n}')=1$. Then we immediately deduce from (\ref{addvfdb}) the bound
\begin{align*}\Delta_h(T_{1,n}',Y_1') \leq\frac{89\|h'\|}{\sqrt{np(1-p)(1-2p)^2}},
\end{align*}
which blows up under the expected conditions that $p$ approaches either $0$, $1/2$ or $1$.

Suppose now that $p=1/2$. Then $f(v)=(1/4-v^2)$. We have $f'(0)=0$, $f''(0)=-2$ and $\mathbb{E}[V_i^3]=0$ for all $1\leq i\leq n$, so we can apply Theorem \ref{thm2.3univ} with $t=2$.  Again, we can take $A_2=1$ and $r_2=0$, as well as $A_3=A_4=r_3=r_4=0$ ($f^{(3)}(v)=f^{(4)}(v)=0$). Let $T_{2,n}=n(\overline{X}(1-\overline{X})-1/4)$ and $Y_2=-Z^2/4$ for $Z\sim \mathrm{N}(0,1)$. We have $\mathbb{E}|V_i|^m=2^{-m}$ for $m\geq1$, and $\mathbb{E}[W^2]=1/4$. Then, applying the bound (\ref{thm2.6bd}) yields, for $h\in C_b^2((-\infty,0))$,
\begin{align*}\Delta_h(T_{2,n},Y_2)\leq 39(\|h'\|+\|h''\|)/n.
\end{align*}
This bound has a faster $O(n^{-1})$ rate of convergence. We can also apply Theorem \ref{thm2.1univ} with $t=2$ to obtain an order $n^{-1/2}$ bound that holds for the wider class of $C_b^1((-\infty,0))$ test functions.		
\end{example}
\begin{example}[Asymptotic distribution of $\overline{X}^p$]\label{ex3.2}
Suppose $X_1,\ldots,X_n$ are i.i.d.\ random variables with mean $\mu$ and variance $\sigma^2\in(0,\infty)$. Define $\overline{X}=n^{-1}\sum_{i=1}^{n}X_i$ and let $p\geq2$. For $i=1,\ldots,n$, let $V_i=X_i-\mu$, so that $V_1,\ldots,V_n$ are i.i.d.\ random variables with zero mean. Denote $\overline{V}=n^{-1}\sum_{i=1}^nV_i$. Define $f(v)=(\mu+v)^p$, so that $f(\overline{V})=\overline{X}^p$.

Suppose first that $\mu\not=0$. We have $f'(0)=p\mu^{p-1}\not=0$, so we can apply Theorem \ref{thm2.1univ} with $t=1$. Moreover, for $v\in\mathbb{R}$,
\begin{align*}|f'(v)|=p|(\mu+v)^{p-1}|\leq 2^{p-2}p(|\mu|^{p-1}+|v|^{p-1})\leq b_{\mu,p}(1+|v|^{p-1}),
\end{align*}
where $b_{\mu,p}:= 2^{p-2}p(1\vee|\mu|^{p-1})$, and we can similarly obtain the bound
$|f''(v)|\leq c_{\mu,p}(1+|v|^{p-2})$, $v\in\mathbb{R}$,
where $c_{\mu,p}:=2^{p-3}p(p-1)(1\vee|\mu|^{p-2})$. We can therefore take $A_1=b_{\mu,p}$, $A_2=c_{\mu,p}$, $r_1=p-1$ and $r_2=p-2$. Let $T_{1,n}=\sqrt{n}(\overline{X}^p-\mu^p)$ and $Y_1\sim \mathrm{N}(0,(p\sigma)^2\mu^{2p-2})$. Then, applying inequality (\ref{thm2.4bd}) we obtain, for $h\in C_1^b(\mathbb{R})$,
\begin{align*}\Delta_h(T_{1,n},Y_1)&\leq\frac{\|h'\|}{\sqrt{n}}\bigg[\frac{c_{\mu,p}}{2}\bigg(\sigma^2+\frac{\mu_{p,\sigma}}{n^{p/2-1}}\bigg)+\frac{3(p+4)b_{\mu,p}}{\sigma^2}\bigg(\bigg(1+\frac{2^{(p-1)/2}\mu_{p,\sigma}}{\sigma}\bigg)\mathbb{E}|V_1|^3\\
&\quad+2^{3(p-1)/2}\mathbb{E}|V_1|^3\mathbb{E}|W|^{p-1}+\mathbb{E}|V_1|^{p+2}\bigg)\bigg].
\end{align*}

Suppose now that $\mu=0$. Then $f(v)=v^p$. We have $f'(0)=\cdots=f^{(p-1)}(0)=0$ and $f^{(p)}(0)=p!\not=0$, so we can apply Theorem \ref{thm2.1univ} with $t=p$. We also have $f^{(p)}(v)=p!=(p!/2)(1+|v|^0)$ and $f^{(p-1)}(v)=0$, so we can take $A_p=p!/2$, $A_{p+1}=r_p=r_{p+1}=0$. Let $T_{p,n}=n^{p/2}\overline{X}^p$ and $Y_p=Z^p$, where $Z\sim \mathrm{N}(0,\sigma^2)$. Then, we obtain
\begin{align*}\Delta_h(T_{p,n},Y_p) &\leq\frac{3p(p+4)\|h'\|}{2\sigma^2\sqrt{n}}\bigg[\bigg(1+\frac{2^{(p-1)/2}\mu_{p,\sigma}}{\sigma}\bigg)\mathbb{E}|X_1|^3\\
&\quad+2^{3(p-1)/2}\mathbb{E}|X_1|^3\mathbb{E}|W|^{p-1}+\mathbb{E}|X_1|^{p+2}\bigg].
\end{align*}
If we additionally assume that $p\geq2$ is an even number then we can obtain a bound with a faster $O(n^{-1})$ convergence rate, provided we make stronger assumptions on the test function $h$. This is because, if $p\geq2$ is an even number, then $f(v)=v^p$ is an even function, and so we can apply Theorem \ref{thm2.2univ} with $t=p$. We can take $A_{p}=p!/2$ and $A_{p+1}=A_{p+2}=r_p=r_{p+1}=r_{p+2}=0$. Applying inequality (\ref{thm2.5bd}) now gives that, for $h\in C_b^2(\mathbb{R}^+)$,
\begin{align*}\Delta_h(T_{p,n},Y_p)&\leq\frac{p^2h_2}{\sigma^2n}\bigg\{\frac{13(2p+3)}{6}\bigg[\bigg(1+\frac{2^{p-1}\mu_{2p-1,\sigma}}{\sigma}\bigg)\mathbb{E}[X_1^4]+2^{3(p-1)}\mathbb{E}[X_1^4]\mathbb{E}|W|^{2p-2}\\
&\quad+\mathbb{E}|X_1|^{2p+2}\bigg]+\frac{3(4p^2-7p+12)}{2\sigma^2}|\mathbb{E}[X_1^3]|\bigg[\bigg(1+\frac{3^{p-1}\mu_{2p-1,\sigma}}{\sigma}\bigg)\mathbb{E}|X_1|^3\\
&\quad+12^{p-1}\mathbb{E}|X_1|^3\mathbb{E}|W|^{2p-2}+\mathbb{E}|X_1|^{2p+1}\bigg]\bigg\}.
\end{align*}
\end{example}

\begin{example}[Asymptotic distribution of the product of two sample moments]\label{ex3.3}
Let $X_{1,j},\ldots,X_{n,j}$, $j=1,2$, be i.i.d.\ random variables with mean $\mathbb{E}[X_{1j}]=\mu_j$ and variance $\mathrm{Var}(X_{1j})=\sigma_j^2\in(0,\infty)$. Define $\overline{X}_j=n^{-1}\sum_{i=1}^nX_{ij}$, $j=1,2$.
For $i=1,\ldots,n$, let $V_{ij}=X_{ij}-\mu_j$, so that $V_{1,j},\ldots,V_{n,j}$, $j=1,2$, are i.i.d.\ random variables with zero mean. Denote $\overline{V}_j=n^{-1}\sum_{i=1}^nV_{ij}$, $j=1,2$, and let $\overline{\mathbf{V}}=(\overline{V}_1,\overline{V}_2)^\intercal$. Define $f(\mathbf{v})=(\mu_1+v_1)(\mu_2+v_2)$, so that $f(\overline{\mathbf{V}})=\overline{X}_1\overline{X}_2$.

Suppose first that $(\mu_1,\mu_2)^\intercal\not=\mathbf{0}$. We have $(\partial_{v_1} f(\mathbf{0}),\partial_{v_2} f(\mathbf{0}))^\intercal=(\mu_2,\mu_1)^\intercal\not=\mathbf{0}$, so we can apply Theorem \ref{thm2.1} with $t=1$. Moreover,
 $\partial_{v_1} f(\mathbf{v})=\mu_2+v_2$, $\partial_{v_2}f(\mathbf{v})=\mu_1+v_1$ and $\partial_{v_1v_2}f(\mathbf{v})=1$, with all other partial derivatives equal to zero. For $j=1,2$, we have $|\partial_{v_j}f(\mathbf{v})|\leq c_{\mu_1,\mu_2}(1+|v_1|+|v_2|)$ for $\mathbf{v}\in\mathbb{R}^2$, where $c_{\mu_1,\mu_2}=\mathrm{max}\{1,|\mu_1|,|\mu_2|\}$. We also have $|\partial_{v_jv_k} f(\mathbf{v})|\leq1= (1/3)(1+|v_1|^0+|v_2|^0)$, $1\leq j,k\leq2$. We may therefore take $A_1=c_{\mu_1,\mu_2}$, $A_2=1/3$, $r_1=1$ and $A_3=r_2=r_3=0$. 
  Let $T_{1,n}=\sqrt{n}(\overline{X}_1\overline{X}_2-\mu_1\mu_2)$ and $Y_1\sim \mathrm{N}(0,(\mu_2\sigma_1)^2+(\mu_1\sigma_2)^2)$. Applying inequality (\ref{thm2.1bd}) now yields, for $h\in C_b^3(\mathbb{R})$,
\begin{align*}\Delta_h(T_{1,n},Y_1)&\leq\frac{1}{\sqrt{n}}\bigg\{\|h'\|(\sigma_1^2+\sigma_2^2)+8c_{\mu_1,\mu_2}^3h_{3}\sum_{j,k=1}^2\bigg[\bigg(1+\frac{8\sigma_{k}^{3}}{\sqrt{\pi}}\bigg)\mathbb{E}|V_{1j}|^3\\
&\quad+\mathbb{E}[V_{1j}^6]+16\sqrt{2}\mathbb{E}|V_{1j}|^3\bigg(3\sigma_{k}^4+\frac{\mathbb{E}[V_{1k}^4]}{n}\bigg)^{3/4}\bigg]\bigg\},
\end{align*}
where, for $W_k=n^{-1/2}\sum_{i=1}^nV_{ik}$, we used H\"older's inequality to bound $\mathbb{E}|W_k|^3\leq(\mathbb{E}[W_k^4])^{3/4}\leq(3\sigma_{k}^4+\mathbb{E}[V_{1k}^4]/n)^{3/4}$ with the final inequality following from a standard calculation that makes use of the fact that $V_{1,k},\ldots,V_{n,k}$ are i.i.d.\ with zero mean and variance $\sigma_{k}^2$.

Now suppose $\mu_1=\mu_2=0$. Then $f(\mathbf{v})=v_1v_2$ is an even function. We also have $(\partial_{v_1} f(\mathbf{0}),\partial_{v_2} f(\mathbf{0}))^\intercal=\mathbf{0}$ and $\partial_{v_1v_2}f(\mathbf{0})=1\not=0$, so we can apply Theorem \ref{thm2.2} with $t=2$. We can take $A_2=1/3$, $A_3=\cdots=A_6=0$ and $r_2=\cdots=r_6=0$ (recall that all partial derivatives of order greater than two are equal to zero). Let $T_{2,n}=n\overline{X}_1\overline{X}_2$ and $Y_2=Z_1Z_2/2$, where $Z_1\sim \mathrm{N}(0,\sigma_1^2)$ and $Z_2\sim \mathrm{N}(0,\sigma_2^2)$ are independent. The limit random variable $Y_2$ is variance-gamma distributed with density $p(x)=(2/(\pi s))K_0(2|x|/s)$, $x\in\mathbb{R}$, where $s=\sigma_1\sigma_2$ and $K_0(x)=\int_0^\infty\mathrm{e}^{-x\cosh(t)}\,\mathrm{d}t$ is a modified Bessel function of the second kind (see \cite[Theorem 1]{g19}). Let $W_k=n^{-1/2}\sum_{i=1}^nX_{ik}$, $k=1,2$.  Applying the bound (\ref{thm2.2bd}) now yields, for $h\in C_b^6(\mathbb{R})$,
\begin{align*}&\Delta_h(T_{2,n},Y_2)\leq\frac{1}{n}\bigg[2630h_{4}\sum_{j,k=1}^2\big((1+120\sigma_k^6)\mathbb{E}[X_{1j}^4]+\mathbb{E}[X_{1j}^{10}]+512\mathbb{E}[X_{1j}^4]\mathbb{E}[W_k^{6}]\big)\\
&\quad+102h_6\sum_{j,a,q=1}^2|\mathbb{E}[X_{1 j}^3]|\big((1+ 810\sigma_q^6)\mathbb{E}|X_{1 a}|^3+\mathbb{E}|X_{1 a}|^{9}+1728\mathbb{E}|X_{1 a}|^3\mathbb{E}[W_q^{6}]\big)\bigg].
\end{align*}
If we additionally assume that $\mathbb{E}[X_{ij}^3]=0$ for all $i,j$,
then we could apply Theorem \ref{thm2.3} to obtain a bound that holds under the weaker assumption that $h\in C_b^4(\mathbb{R})$ and requires only the existence of the eighth order moments of $X_{1,j}$, $j=1,2$.
\end{example}

\begin{example}[Asymptotic joint distribution of sample mean and sample variance]\label{ex3.4}
Let $X_1,\ldots,X_n$ be i.i.d.\ random variables with mean $\mu$ and variance $\sigma^2\in(0,\infty)$. For $i=1,\ldots,n$, let $V_{i,1}=X_i-\mu$, so that $V_{1,1},\ldots,V_{1,n}$ have zero mean. 
Also, for $i=1,\ldots,n$, let $V_{i,2}=V_{i,1}^2-\sigma^2$, so that $V_{1,2},\ldots,V_{n,2}$ have zero mean. Denote $\overline{V}_j=n^{-1}\sum_{i=1}^nV_{ij}$, $j=1,2$, and let $\overline{\mathbf{V}}=(\overline{V}_1,\overline{V}_2)^\intercal$. Let $\mathbf{f}(\mathbf{v})=(f_1(\mathbf{v}),f_2(\mathbf{v}))^\intercal$, where $f_1(\mathbf{v})=\mu+v_1$ and $f_2(\mathbf{v})=\sigma^2+v_2-v_1^2$, so that $\mathbf{f}(\overline{\mathbf{V}})=(\overline{X},n^{-1}\sum_{i=1}^n(X_i-\overline{X})^2)^\intercal$. We have $(\partial_{v_1} f_1(\mathbf{0}),\partial_{v_2} f_1(\mathbf{0}))^\intercal =(1,0)^\intercal$ and $(\partial_{v_1} f_2(\mathbf{0}),\partial_{v_2} f_2(\mathbf{0}))^\intercal =(0,1)^\intercal$, so we can apply Theorem \ref{thm2.1} with $t=1$. Moreover, $\partial_{v_1} f_1(\mathbf{v})=1$, $\partial_{v_2} f_1(\mathbf{v})=0$, $\partial_{v_1} f_2(\mathbf{v})=-2v_1$, $\partial_{v_2} f_2(\mathbf{v})=1$, and $\partial_{v_1}^2 f_2(\mathbf{v})=-2$ with all other partial derivatives equal to zero. By our usual arguments, we may therefore take $A_1=2$, $A_2=2/3$, $r_1=1$ and $A_3=r_2=r_3=0$. Let $\mathbf{T}_{1,n}=\sqrt{n}(\overline{X}-\mu,n^{-1}\sum_{i=1}^n(X_i-\overline{X})^2-\sigma^2)^\intercal$ and $\mathbf{Y}_1\sim \mathrm{N}_2(\mathbf{0},\Sigma)$, where $(\Sigma)_{ij}=\sigma_{ij}$ with $\sigma_{11}=\sigma^2$, $\sigma_{12}=\sigma_{21}=\mathbb{E}[(X_1-\mu)^3]=\mu_3$ and $\sigma_{22}=\mathrm{Var}((X_1-\mu)^2)=\mu_4-\sigma^4$, and $\mu_3$ and $\mu_4$ denote the third and fourth central moments of $X_1$, respectively. Then, applying inequality (\ref{thm2.1bd}) gives that, for $h\in C_b^3(\mathbb{R}^2)$,
\begin{align*}\Delta_h(\mathbf{T}_{1,n},\mathbf{Y}_1)&\leq\frac{4}{\sqrt{n}}\bigg\{|h|_1(\mu_4+\sigma^2-\sigma^4)+16h_{3,2}\sum_{j,k=1}^2\bigg[\bigg(1+\frac{8\sigma_{kk}^{3/2}}{\sqrt{\pi}}\bigg)\mathbb{E}|V_{1j}|^3\\
&\quad+\mathbb{E}[V_{1j}^6]+16\sqrt{2}\mathbb{E}|V_{1j}|^3\bigg(3\sigma_{kk}^2+\frac{\mathbb{E}[V_{1k}^4]}{n}\bigg)^{3/4}\bigg]\bigg\},
\end{align*}
where we simplified the bound similarly to how we did in Example \ref{ex3.3}.

We remark that the approach used in this example could be applied to obtain $O(n^{-1/2})$ Wasserstein distance bounds for the normal approximation of the moment estimators $n^{-1}\sum_{i=1}^n(X_i-\overline{X})^p$, $p\geq2$, as well as $O(n^{-1/2})$ bounds for the multivariate normal approximation of the joint distribution of various moment estimators. 
\end{example}

\begin{example}[Rank-based statistics]\label{ex3.5}
 Suppose we have $r\geq2$ treatments across $n$ independent trials and for the $i$-th trial we have the ranking $\pi_i(1),\ldots,\pi_i(r)$, where $\pi_i(j)\in\{1,\ldots,r\}$, over the $r$ treatments.  Under the null hypothesis, the rankings are independent permutations $\pi_1,\ldots,\pi_n$, with each permutation being equally likely. Fix a function $J:\{1,\ldots,r\}\rightarrow\mathbb{R}$ and let 
\begin{equation*}\overline{J}=\frac{1}{r}\sum_{k=1}^rJ(k), \quad \sigma_J^2=\frac{1}{r-1}\sum_{k=1}^r(J(k)-\overline{J})^2.
\end{equation*} 
For $i=1,\ldots,n$ and $j=1,\ldots,r$, let $X_{ij}^J=(J(\pi_i(j))-\overline{J})/\sigma_J$ and  $W_j^J=n^{-1/2}\sum_{i=1}^nX_{ij}^J$. Then, Sen \cite{sen} proposed the test statistic
 \begin{equation*}S_{n,r}^J=\sum_{j=1}^r(W_j^J)^2,
 \end{equation*}
and proved that it is asymptotically $\chi_{(r-1)}^2$ distributed under the null hypothesis.

Friedman's statistic \cite{friedman}, which we denote by $F_{n,r}$, corresponds to the case $J(k)=k$, and in this case $\overline{J}=(r+1)/2$ and $\sigma_J^2=r(r+1)/12$. Another special case is Brown-Mood's statistic, introduced by \cite{bm51} as an alternative to Friedman's statistic. This statistic
corresponds to the case $J(k)=\mathbf{1}(k\leq a)$ for a fixed $a\in\{1,\ldots,r-1\}$, and we have $\overline{J}=a/r$ and $\sigma_J^2=a(r-a)/(r(r-1))$.

Kolmogorov distance bounds to quantify the chi-square approximation for Sen's family of rank-based statistics were derived by \cite{pu21} under the assumption that for all $k=1,\ldots,r$ we have $|J(k)-\overline{J}|\leq B$, for some constant $B$ (possibly depending on $r$). Note that both the Friedman and Brown-Mood statistics satisfy this assumption. A Kolmogorov distance bound of order $n^{-1}$ (without explicit dependence on $r$) was derived by \cite{pu21} for the case $r\geq6$, as well as a bound on the rate of convergence of order $n^{-1+1/r}$ for $2\leq r\leq 5$. In this example, we obtain $O(n^{-1})$ bounds for smooth test functions that hold for all $r\geq2$.

Define $\overline{X}_j=n^{-1}\sum_{i=1}^nX_{ij}$, where we have dropped the superscript $J$ in the notation. For fixed $j=1,\ldots,r$, the random variables $\pi_1(j),\ldots,\pi_n(j)$ are i.i.d$.$ with uniform distribution on $\{1,\ldots,r\}$. Thus, for fixed $j$, the random variables $X_{1j},\ldots,X_{nj}$ are also i.i.d..
Observe that we have the representation $S_{n,r}^J=nf(\overline{\mathbf{X}})$, where $f(\mathbf{x})=\sum_{j=1}^rx_j^2$ (with $f(\mathbf{0})=0$). For $j=1,\ldots,r$, we also have $\partial_{x_j} f(\mathbf{x})=2x_j$ and $\partial_{x_j}^2 f(\mathbf{x})=2$ with all other partial derivatives equal to zero. In particular, $\partial_{x_j} f(\mathbf{0})=0$ and $\partial_{x_j}^2 f(\mathbf{0})=2$, for $j=1,\ldots,r$. Also, the covariance matrix of $\mathbf{W}=(W_1,\ldots,W_r)^\intercal$ is non-negative definite with entries $\sigma_{jj}=(r-1)/r$ and $\sigma_{jk}=-1/r$, $j\not=k$ (see the SM).  Moreover, $f$ is an even function. We can therefore apply Theorem \ref{thm2.2} with $t=2$. It is readily seen that we can take $A_2=2$, $r_2=1/6,$ $A_3=\cdots A_6=r_3=\cdots=r_6=0$ (we take $r_2=1/6$ to obtain a bound with an improved dependence on the parameter $r$; see Remark \ref{vfnd}). We immediately have the bound $\mathbb{E}|X_{i,j}|^k\leq (B\sigma_J)^k$, for all $i,j$ and $k\geq1$, and an application of the Marcinkiewiczi-Zygmund inequality \cite{mz37} gives that $\mathbb{E}|W_j|^k\leq K(B/\sigma_J)^k$ for some constant $K$. 
Let $V_r\sim\chi_{(r-1)}^2$. Now, applying inequality (\ref{thm2.2bd}) gives, for $h\in C_b^6(\mathbb{R}^+)$,  
\begin{align}\label{fr1}\Delta_h(S_{n,r}^J,V_r)\leq Cr^{12}(B/\sigma_J)^{13}h_6/n,
\end{align}
for some universal constant $C>0$, where to obtain a compact final bound we used that $1\leq B/\sigma_J$.

Since all third order partial derivatives of $f$ are equal to zero, we can also apply part (i) of Theorem \ref{thmsec2} to obtain an alternative bound. 
We can write $S_{n,r}^J=g(\mathbf{W})$, where $g(\mathbf{w})=\sum_{j=1}^rw_j^2$.  
For $k=1,\ldots,r$, we have $\partial_{w_k} g(\mathbf{w})=2w_k$, $\partial_{w_k}^2g(\mathbf{w})=2$ and all other derivatives are equal to 0.  Now, $|\partial_{w_k} g(\mathbf{w})|^6=64w_k^6$ and $|\partial_{w_k}^2 g(\mathbf{w})|^3=8$, meaning we can take $P(\mathbf{w})=8+64\sum_{j=1}^rw_j^6$ as our dominating function.  Applying the bound (\ref{boundthm2}) with $A=8$, $B=64$ and $r=6$ yields, for $h\in C_b^6(\mathbb{R}^+)$, 
\begin{align}\label{fr20}\Delta_h(S_{n,r}^J,V_r)\leq C'r^7(B/\sigma_J)^{12}h_6/n,
\end{align}
for some universal constant $C'>0$. The bound (\ref{fr20}) has a better dependence on the number of treatments $r$ than the bound (\ref{fr1}).

For Friedman's statistic, the distribution of the random variables $X_{ij}$ is symmetric about 0, so $\mathbb{E}[X_{ij}X_{ik}X_{il}]=0$ for all $1\leq i\leq n$ and $1\leq j,k,l\leq r$. We can therefore apply part (i) of Theorem \ref{thmsec2} to obtain an improved bound. 
We have $|\partial_{w_k} g(\mathbf{w})|^4=16w_k^4$ and $|\partial_{w_k}^2 g(\mathbf{w})|^2=4$, so we can take $P(\mathbf{w})=4+16\sum_{j=1}^rw_j^4$ as our dominating function.  Applying inequality (\ref{boundtheorem}) with $A=4$, $B=16$ and $r=4$ yields, for $h\in C_b^4(\mathbb{R}^+)$, 
\begin{align*}\Delta_h(F_{n,r},V_r)\leq C''r^5h_4/n,
\end{align*}
for some universal constant $C''>0$. This improves on the bound $Cr^7h_6/n$ that would result from specialising the bound (\ref{fr20}) to the case of Friedman's statistic. However, the dependence on $r$ is worse than the bound of \cite[Theorem 1.1]{gr21} which also has the optimal $O(n^{-1})$ rate for smooth test functions, but tends to zero under the optimal condition $n/r\rightarrow\infty$. It is to be expected that the approach of \cite{gr21} that was specifically targeted to the chi-square approximation of Friedman's statistic results in a better bound, although it is interesting that we are able to derive a bound with the optimal $O(n^{-1})$ rate so efficiently as a consequence of our general bounds.
\end{example}

\begin{example}[Pearson's statistic]\label{ex3.6} Consider $n$ independent trials, with each trial resulting in a unique classification over $r$ classes. Let $p_1,\ldots, p_r$ be the non-zero classification probabilities, and let $(U_1,\ldots , U_r)$ denote the observed numbers in each class. Then Pearson's chi-square statistic for goodness-of-fit \cite{pearson}, given by
\begin{equation*} \chi^2 = \sum_{j=1}^r \frac{(U_j - n p_j)^2}{n p_j}, 
\end{equation*}
is asymptotically $\chi_{(r-1)}^2$ distributed under the null hypothesis.  

The cell counts $U_j\sim\mathrm{Bin}(n,p_j)$, $1\leq j\leq r$, are dependent random variables that satisfy $\sum_{j=1}^rU_j=n$.  Let $I_{ij}\sim \mathrm{Ber}(p_j)$ denote the indicator that the $i$-th trial falls in the $j$-th cell.  By letting $X_{ij}=(I_{ij}-p_j)/\sqrt{p_j}$ and $W_j=n^{-1/2}\sum_{i=1}^nX_{ij}$, we can write $\chi^2=g(\mathbf{W})=\sum_{j=1}^rW_j^2$,
where $g(\mathbf{w})=\sum_{j=1}^rw_j^2$.  As was the case for the rank-based statistics, the function $g$ is even and possesses derivatives of polynomial growth.  Also, since trials are assumed to be independent, the random variables $X_{1,j},\ldots,X_{n,j}$ are independent for fixed $j$.  The covariance matrix of $\mathbf{W}=(W_1,\ldots,W_r)^\intercal$ is non-negative definite, with entries $\sigma_{jj}=1-p_j$ and $\sigma_{jk}=-\sqrt{p_jp_k}$, $j\not=k$ (see \cite{gaunt chi square}).  Therefore Pearson's statistic falls within the class of statistics covered by part (i) of Theorem \ref{thmsec2}.  

As was the case for the rank-based statistics, $|\partial_{w_k} g(\mathbf{w})|^6=64w_k^6$ and $|\partial_{w_k}^2 g(\mathbf{w})|^3=8$, so we can take $P(\mathbf{w})=8+64\sum_{j=1}^rw_j^6$ as our dominating function.  
Applying inequality (\ref{boundthm2}) with $A=8$, $B=64$ and $r=6$ results in a bound of the form
\[\Delta_h(\chi^2,V_r)\leq Cn^{-1}h_6,
\]
where $C>0$ is a constant depending on $r$ and $p_1,\ldots,p_r$, but not $n$.  We do not explicitly find such a $C$ since a superior upper bound of the form $Kr(np_*)^{-1}\sum_{k=0}^5\|h^{(k)}\|$, where $p_*=\min_{1\leq j\leq r}p_j$ and $h^{(0)}\equiv h$, was obtained by \cite{gaunt chi square}.  This bound has a much better dependence on $r$ and $p_1,\ldots,p_r$ than a bound that would result from an application of Theorem \ref{thmsec2}. As was the case with Friedman's statistic, it is to be expected that the bound of \cite{gaunt chi square} outperforms ours, given that their approach was targeted specifically to Pearson's statistic rather than the more general class of statistics considered in this paper. 
\end{example}		
		
\section{Stein's method for functions of multivariate normal random vectors}\label{sec4}


We begin this section by outlining how Stein's method can be used to prove approximation theorems when the limit distribution can be represented as a function of multivariate normal random vectors (for more details see \cite{gaunt normal}). 
Let $\mathbf{g}:\mathbb{R}^d\rightarrow\mathbb{R}^m$ be continuous and let $\mathbf{Z}\sim \mathrm{N}_d(\mathbf{0},\Sigma)$, where the covariance matrix $\Sigma$ is non-negative definite.  Suppose that we are interested in bounding the distance between the distributions of $\mathbf{g}(\mathbf{W})$ and $\mathbf{g}(\mathbf{Z})$. 
Consider the multivariate normal Stein equation \cite{barbour2,gotze}
  with test function $h(\mathbf{g}(\cdot))$:
\begin{equation} \label{mvng} \nabla^\intercal\Sigma\nabla f(\mathbf{w})-\mathbf{w}^\intercal\nabla f(\mathbf{w})=h(\mathbf{g}(\mathbf{w}))-\mathbb{E}[h(\mathbf{g}(\mathbf{Z}))].
\end{equation} 
We can therefore bound the quantity of interest $|\mathbb{E}[h(\mathbf{g}(\mathbf{W}))]-\mathbb{E}[h(\mathbf{g}(\mathbf{Z}))]|$ by solving (\ref{mvng}) for $f$ and then bounding the expectation 
\begin{equation}\label{emvn}\mathbb{E}[\nabla^\intercal\Sigma\nabla f(\mathbf{W})-\mathbf{W}^\intercal\nabla f(\mathbf{W})].
\end{equation}
A number of coupling techniques have been developed for bounding such expectations (see \cite{chatterjee 3, goldstein 2, goldstein1, meckes, reinert 1}).  These papers also give general plug-in bounds for this quantity, although these only hold for the classical case that the derivatives of the test function (here $h(\mathbf{g}(\cdot))$) are bounded, in which standard bounds for the derivatives of the solution to (\ref{mvng}) can be applied (see \cite{gaunt rate, goldstein1, meckes}).   However, in general, the derivatives of the test function $h(\mathbf{g}(\cdot))$ will be unbounded (for example, consider $g(w)=w^2$) and therefore the derivatives of the solution (see \cite{barbour2,goldstein1})
\begin{equation}\label{mvnsolnh}f(\mathbf{w})=-\int_{0}^{\infty}\{\mathbb{E}[h(\mathbf{g}(\mathrm{e}^{-s}\mathbf{w}+\sqrt{1-\mathrm{e}^{-2s}}\mathbf{Z}))]-\mathbb{E}[h(\mathbf{g}(\mathbf{Z}))]\}\,\mathrm{d}s
\end{equation}
will also in general be unbounded.  The partial derivatives of the solution (\ref{mvnsolnh}) were bounded by \cite{gaunt normal,GauntSut} for a large class of functions $g:\mathbb{R}^d\rightarrow\mathbb{R}$; in particular, bounds are given for the case that the partial derivatives of $g$ have polynomial growth. 
With such bounds on the solution and the coupling strategies developed for multivariate normal approximation it is in principle possible to bound the expectation (\ref{emvn}).
 This is the approach we shall take 
 in the remainder of this section.

\subsection{Bounds for the solution of the Stein equation for $\mathbf{g}:\mathbb{R}^d\rightarrow\mathbb{R}^m$}
		
The bounds of \cite{gaunt normal,GauntSut} for the solution of the Stein equation (\ref{mvnsolnh}) were given for real-valued functions $g:\mathbb{R}^d\rightarrow\mathbb{R}$. In this section, we generalise these bounds to vector-valued functions $\mathbf{g}:\mathbb{R}^d\rightarrow\mathbb{R}^m$.	
The key to this generalisation is Lemma \ref{bell lem} below, in which we obtain a simple bound for the $n$-th order partial derivatives of $\phi(\mathbf{w})=h(\mathbf{g}(\mathbf{w}))$.  To achieve this bound, we make use of a generalisation of the Fa\`{a} di Bruno formula to arbitrary $d,m\geq1$, due to \cite{cs96}.  A formula of \cite{ma} which gives a generalisation of the classical Fa\`{a} di Bruno formula to arbitrary $d\geq1$ and $m=1$ was used in \cite{gaunt normal}.  
		
		Before stating the formula of \cite{cs96}, we recall some multivariate notation that will help to simplify expressions.  Let $\mathbb{N}$ be the set of natural numbers.  For $\boldsymbol{\nu}=(\nu_1,\ldots,\nu_d)\in\mathbb{N}^d$ and $\mathbf{z}\in (z_1,\ldots,z_d)\in\mathbb{R}^d$, we write
		\begin{align*}|\boldsymbol{\nu}|&=\sum_{i=1}^d\nu_i, \quad \boldsymbol{\nu}!=\prod_{i=1}^d(\nu_i!), \quad D_{\mathbf{x}}^{\boldsymbol{\nu}}=\frac{\partial^{|\boldsymbol{\nu}|}}{\partial x_1^{\nu_1}\cdots\partial x_d^{\nu_d}}, \quad \text{for $|\boldsymbol{\nu}|>0$}, \\
			D_\mathbf{x}^\mathbf{0}&=\text{identity operator}, \quad \mathbf{z}^{\boldsymbol{\nu}}=\prod_{i=1}^d z_i^{\nu_i}.
		\end{align*}
		If $(l_1,\ldots,l_d)\in\mathbb{N}^d$, we write $\mathbf{l}\leq\boldsymbol{\nu}$ provided $l_i\leq \nu_i$ for $i=1,\ldots,d$.  This allows us to introduce a linear ordering on $\mathbb{N}^d$.  If $\boldsymbol{\mu}=(\mu_1,\ldots,\mu_d)$ and $\boldsymbol{\nu}=(\nu_1,\ldots,\nu_d)$ are in $\mathbb{N}^d$, we write $\boldsymbol{\mu}\prec \boldsymbol{\nu}$ provided one of the following holds: (i) $|\boldsymbol{\mu}|<|\boldsymbol{\nu}|$; (ii) $|\boldsymbol{\mu}|=|\boldsymbol{\nu}|$ and $\mu_1<\nu_1$; or (iii) $|\boldsymbol{\mu}|=|\boldsymbol{\nu}|$, $\mu_1=\nu_1,\ldots,\mu_k=\nu_k$ and $\mu_{k+1}<\nu_{k+1}$ for some $1\leq k\leq d-1$.		
		
		Finally, we set $\phi_{\boldsymbol{\nu}}=D_{\mathbf{x}}^{\boldsymbol{\nu}}\phi(\mathbf{x})$, $h_{\boldsymbol{\lambda}}=D_\mathbf{y}^{\boldsymbol{\lambda}}(\mathbf{y})$, $g_{\boldsymbol{\mu}}^{(i)}=D_\mathbf{x}^{\boldsymbol{\mu}}g^{(i)}(\mathbf{x})$ and $\mathbf{g}_{\boldsymbol{\mu}}=(g_{\boldsymbol{\mu}}^{(1)},\ldots,g_{\boldsymbol{\mu}}^{(m)})$.  With this notation, we can state the generalised Fa\`{a} di Bruno formula (\cite[Theorem 2.1]{cs96}):
		\begin{equation}\label{bdf}\phi_{\boldsymbol{\nu}}=\sum_{1\leq|\boldsymbol{\lambda}|\leq n}h_{\boldsymbol{\lambda}} \sum_{s=1}^n\sum_{p_s(\boldsymbol{\nu},\boldsymbol{\lambda})}(\boldsymbol{\nu}!)\prod_{j=1}^s\frac{[\mathbf{g}_{\mathbf{l}_j}]^{\mathbf{k}_j}}{(\mathbf{k}_j!)[\mathbf{l}_j!]^{|\mathbf{k}_j|}},
		\end{equation}
		where $n=|\boldsymbol{\nu}|$ and
		\begin{align*}&p_s(\boldsymbol{\nu},\boldsymbol{\lambda})=\{(\mathbf{k}_1,\ldots,\mathbf{k}_s;\mathbf{l}_1,\ldots,\mathbf{l}_s)\,:\,|\mathbf{k}_i|>0, \\
			&\mathbf{0}\prec\mathbf{l}_1\prec\cdots\prec\mathbf{l}_s, \: \sum_{i=1}^s\mathbf{k}_i=\boldsymbol{\lambda} \: \text{and} \: \sum_{i=1}^s|\mathbf{k}_i|\mathbf{l}_i=\boldsymbol{\nu}\}.
		\end{align*}
		In the above, the vectors $\mathbf{k}$ are $m$-dimensional, the vectors $\mathbf{l}$ are $d$-dimensional
		and we always set $0^0 = 1$.  We shall also make use of the following formula \cite[Corollary 2.9]{cs96}:
		\begin{equation}\label{ineq1}\sum_{|\boldsymbol{\lambda}|=k} \sum_{s=1}^n\sum_{p_s(\boldsymbol{\nu},\boldsymbol{\lambda})}(\boldsymbol{\nu}!)\prod_{j=1}^s\frac{1}{(\mathbf{k}_j!)[\mathbf{l}_j!]^{|\mathbf{k}_j|}}=m^k{n\brace k},
		\end{equation}
		where we recall that ${n\brace k}$ is a Stirling number of the second kind.
		
		With formulas (\ref{bdf}) and (\ref{ineq1}) we are able to obtain a simple bound for the $n$-th order partial derivatives. 
		For a given function $P:\mathbb{R}^d\rightarrow\mathbb{R}^+$, we recall that the function $\mathbf{g}:\mathbb{R}^d\rightarrow\mathbb{R}^m$ belongs to the class $C_{P}^{n,m}(\mathbb{R}^d)$ if all $n$-th order partial derivatives of $g^{(1)},\ldots,g^{(m)}$ exist and are such that, for $\mathbf{w}\in\mathbb{R}^d$, 
		\[|g_{\mathbf{l}_j}^{(i)}(\mathbf{w})|^{n/|\mathbf{l}_j|}\leq P(\mathbf{w}), \quad |\mathbf{l}_j|=1,\ldots,n,\: i=1,\ldots,m.
		\]
		
		\begin{lemma}\label{bell lem}Suppose $h\in C_b^{n}(\mathbb{R}^m)$ and $\mathbf{g}\in C_P^{n,m}(\mathbb{R}^d)$.  Then, for $\boldsymbol{\nu}\in\mathbb{N}^d$ such that $|\boldsymbol{\nu}|=n$, and $\mathbf{w}\in\mathbb{R}^d$, 
			\begin{equation}\label{hgdiff0}|\phi_{\boldsymbol{\nu}}(\mathbf{w})|\leq h_{n,m}P(\mathbf{w}).
			\end{equation}
		\end{lemma}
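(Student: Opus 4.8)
The plan is to start from the generalised Fa\`a di Bruno formula (\ref{bdf}) for $\phi_{\boldsymbol{\nu}}$, pass to absolute values, and then control its two constituents — the derivatives $h_{\boldsymbol{\lambda}}$ of the outer function and the product $\prod_{j=1}^s[\mathbf{g}_{\mathbf{l}_j}]^{\mathbf{k}_j}$ of derivatives of the components of $\mathbf{g}$ — separately, so that the ready-made combinatorial identity (\ref{ineq1}) can be invoked to collapse the remaining weights into the constant $h_{n,m}$. Applying the triangle inequality to (\ref{bdf}) gives
\begin{equation*}|\phi_{\boldsymbol{\nu}}(\mathbf{w})|\leq\sum_{1\leq|\boldsymbol{\lambda}|\leq n}|h_{\boldsymbol{\lambda}}|\sum_{s=1}^n\sum_{p_s(\boldsymbol{\nu},\boldsymbol{\lambda})}(\boldsymbol{\nu}!)\prod_{j=1}^s\frac{\big|[\mathbf{g}_{\mathbf{l}_j}]^{\mathbf{k}_j}\big|}{(\mathbf{k}_j!)[\mathbf{l}_j!]^{|\mathbf{k}_j|}}.
\end{equation*}
Since $h_{\boldsymbol{\lambda}}=D_{\mathbf y}^{\boldsymbol{\lambda}}h$ is a partial derivative of $h$ of order $|\boldsymbol{\lambda}|$, the definition of $|h|_k$ immediately yields $|h_{\boldsymbol{\lambda}}|\leq|h|_{|\boldsymbol{\lambda}|}$.

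The one genuinely load-bearing step is the uniform control of the $\mathbf{g}$-product. Writing $\mathbf{k}_j=(k_{j,1},\ldots,k_{j,m})$, we have $[\mathbf{g}_{\mathbf{l}_j}]^{\mathbf{k}_j}=\prod_{i=1}^m(g_{\mathbf{l}_j}^{(i)})^{k_{j,i}}$, and the hypothesis $\mathbf{g}\in C_P^{n,m}(\mathbb{R}^d)$ supplies $|g_{\mathbf{l}_j}^{(i)}(\mathbf{w})|\leq P(\mathbf{w})^{|\mathbf{l}_j|/n}$. Multiplying these over $i=1,\ldots,m$ gives $|[\mathbf{g}_{\mathbf{l}_j}]^{\mathbf{k}_j}|\leq P(\mathbf{w})^{|\mathbf{k}_j||\mathbf{l}_j|/n}$, and hence $\prod_{j=1}^s|[\mathbf{g}_{\mathbf{l}_j}]^{\mathbf{k}_j}|\leq P(\mathbf{w})^{\frac1n\sum_{j=1}^s|\mathbf{k}_j||\mathbf{l}_j|}$. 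The defining constraint $\sum_{j=1}^s|\mathbf{k}_j|\mathbf{l}_j=\boldsymbol{\nu}$ of the index set $p_s(\boldsymbol{\nu},\boldsymbol{\lambda})$, read in the $|\cdot|$ notation (all entries being non-negative, so $|\cdot|$ acts linearly), forces $\sum_{j=1}^s|\mathbf{k}_j||\mathbf{l}_j|=|\boldsymbol{\nu}|=n$. Thus the exponent is exactly $1$ and the entire $\mathbf{g}$-product is bounded by $P(\mathbf{w})$, uniformly over the summation set. This is precisely why the class $C_P^{n,m}$ is framed with the $n/|\mathbf{l}_j|$-th power: the homogeneity is engineered so that every admissible configuration $(\mathbf{k}_j,\mathbf{l}_j)$ reconstitutes the single factor $P(\mathbf{w})$, independently of $s$, $\boldsymbol{\lambda}$, and the particular partition.

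Substituting both bounds back, pulling $P(\mathbf{w})$ out of the sum, and grouping the terms by $|\boldsymbol{\lambda}|=k$, I obtain
\begin{equation*}|\phi_{\boldsymbol{\nu}}(\mathbf{w})|\leq P(\mathbf{w})\sum_{k=1}^n|h|_k\sum_{|\boldsymbol{\lambda}|=k}\sum_{s=1}^n\sum_{p_s(\boldsymbol{\nu},\boldsymbol{\lambda})}(\boldsymbol{\nu}!)\prod_{j=1}^s\frac{1}{(\mathbf{k}_j!)[\mathbf{l}_j!]^{|\mathbf{k}_j|}}.
\end{equation*}
The inner triple sum is exactly the left-hand side of (\ref{ineq1}), so it collapses to $m^k{n\brace k}$, whence $|\phi_{\boldsymbol{\nu}}(\mathbf{w})|\leq P(\mathbf{w})\sum_{k=1}^n m^k{n\brace k}|h|_k=h_{n,m}P(\mathbf{w})$, which is (\ref{hgdiff0}). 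I do not anticipate any obstacle beyond careful bookkeeping: once the $\mathbf{g}$-product has been pinned to $P(\mathbf{w})$ via the degree constraint, all of the combinatorics are dispatched by the pre-existing identity (\ref{ineq1}), and no further estimation is required.
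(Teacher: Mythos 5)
Your proposal is correct and follows essentially the same route as the paper's proof: triangle inequality applied to the generalised Fa\`a di Bruno formula (\ref{bdf}), the bound $|h_{\boldsymbol{\lambda}}|\leq|h|_{|\boldsymbol{\lambda}|}$, the observation that the degree constraint $\sum_{j=1}^s|\mathbf{k}_j|\mathbf{l}_j=\boldsymbol{\nu}$ forces the product of $\mathbf{g}$-derivatives to be bounded by a single factor of $P(\mathbf{w})$, and the identity (\ref{ineq1}) to produce $h_{n,m}$. The only difference is cosmetic bookkeeping in how the double product over $i$ and $j$ is organised.
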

		
\begin{remark}\label{remm1}Lemma \ref{bell lem} generalises Lemma 2.1 of \cite{gaunt normal} from $m=1$ to $m\geq1$ in a very simple manner.  The bound (\ref{hgdiff0})  for $m\geq1$ takes  the same form as inequality (2.1) of \cite{gaunt normal}, with the only difference being that the factor $h_{n,m}$ is present instead of the factor $h_n:=h_{n,1}$.  Inequality (2.1) of Lemma 2.1 of \cite{gaunt normal} was the starting point to bound the partial derivatives of the solution (\ref{mvnsolnh}) (in the case $m=1$).
 Since our generalised bound only differs by a multiplicative factor independent of $\mathbf{w}$, one can extend all the bounds given in Section 2 of \cite{gaunt normal} and Section 2 of \cite{GauntSut} to the case $m\geq1$ by simply replacing the factor $h_i=h_{i,1}$ by $h_{i,m}$.
\end{remark}
		
		
\begin{proof} Since $\mathbf{g}\in C_P^{n,m}(\mathbb{R}^d)$, we have that
			\begin{equation*}\prod_{j=1}^s|\mathbf{g}_{\mathbf{l}_j}(\mathbf{w})|^{\mathbf{k}_j}=\prod_{j=1}^s\prod_{i=1}^m|\mathbf{g}_{\mathbf{l}_j}^{(i)}(\mathbf{w})|^{(\mathbf{k}_j)_i}\leq\prod_{j=1}^s\prod_{i=1}^m\big((P(\mathbf{w}))^{|\mathbf{l}_j|/n}\big)^{(\mathbf{k}_j)_i},
			\end{equation*}
			where $(\mathbf{k}_j)_i$ is the $i$-th component of $\mathbf{k}_j$.  But
			\begin{equation*}\sum_{j=1}^s\sum_{i=1}^m|\mathbf{l}_j|(\mathbf{k}_j)_i=\sum_{j=1}^s|\mathbf{l}_j|\sum_{i=1}^m(\mathbf{k}_j)_i=\sum_{j=1}^s|\mathbf{l}_j||\mathbf{k}_j|=n,
			\end{equation*}
			and therefore, for $\mathbf{w}\in\mathbb{R}^d$,
			\begin{equation}\label{ineq2}\prod_{j=1}^s|\mathbf{g}_{\mathbf{l}_j}(\mathbf{w})|^{\mathbf{k}_j}\leq P(\mathbf{w}).
			\end{equation}
			Recalling formula (\ref{bdf}), we have that
			\begin{align*}|\phi_{\boldsymbol{\nu}}(\mathbf{w})|&=\bigg|\sum_{1\leq|\boldsymbol{\lambda}|\leq n}h_{\boldsymbol{\lambda}}(\mathbf{g}(\mathbf{w})) \sum_{s=1}^n\sum_{p_s(\boldsymbol{\nu},\boldsymbol{\lambda})}(\boldsymbol{\nu}!)\prod_{j=1}^s\frac{[\mathbf{g}_{\mathbf{l}_j}(\mathbf{w})]^{\mathbf{k}_j}}{(\mathbf{k}_j!)[\mathbf{l}_j!]^{|\mathbf{k}_j|}}\bigg| \\
				&\leq\sum_{1\leq|\boldsymbol{\lambda}|\leq n}|h|_{|\boldsymbol{\lambda}|} \sum_{s=1}^n\sum_{p_s(\boldsymbol{\nu},\boldsymbol{\lambda})}(\boldsymbol{\nu}!)\prod_{j=1}^s\frac{|\mathbf{g}_{\mathbf{l}_j}(\mathbf{w})|^{\mathbf{k}_j}}{(\mathbf{k}_j!)[\mathbf{l}_j!]^{|\mathbf{k}_j|}}\\
				&\leq P(\mathbf{w})\sum_{1\leq|\boldsymbol{\lambda}|\leq n}|h|_{|\boldsymbol{\lambda}|} \sum_{s=1}^n\sum_{p_s(\boldsymbol{\nu},\boldsymbol{\lambda})}(\boldsymbol{\nu}!)\prod_{j=1}^s\frac{1}{(\mathbf{k}_j!)[\mathbf{l}_j!]^{|\mathbf{k}_j|}} \\
				&=P(\mathbf{w})\sum_{1\leq |\boldsymbol{\lambda}|\leq n}m^{|\boldsymbol{\lambda}|}{n\brace |\boldsymbol{\lambda}|}|h|_{|\boldsymbol{\lambda}|},
			\end{align*}
			where we used inequality (\ref{ineq2}) to obtain the final inequality and we used (\ref{ineq1}) to obtain the final equality.  This completes the proof.
\end{proof}

\subsection{Proofs of Theorems \ref{theoremsec21}--\ref{indep2}}


The following lemma provides bounds on some expectations that will appear in the proofs of Theorems \ref{theoremsec21} and \ref{thmsec2}. The proof is given in the SM. In the lemma, $f$ denotes the solution (\ref{mvnsolnh}) to the Stein equation (\ref{mvng}) and $\psi_{jkl}$ is the solution to the following partial differential equation that was introduced by \cite{gaunt normal}:
\begin{equation}\label{234multinor}\nabla^\intercal\Sigma\nabla \psi_{jkl}(\mathbf{w})-\mathbf{w}^\intercal\nabla \psi_{jkl}(\mathbf{w})=\frac{\partial^{3} f}{\partial w_j\partial w_k\partial w_l}(\mathbf{w}).
\end{equation}
For $i=1,\ldots,n$ and $j=1,\ldots,d$, we introduce the random variables $W_j^{(i)}=W_j-n^{-1/2}X_{ij}$, so that $W_j^{(i)}$ and $X_{ij}$ are independent.  We also write $\mathbf{W}^{(i)}=(W_1^{(i)},\ldots,W_d^{(i)})^\intercal$. 

\begin{lemma}\label{cbwbshc} Let $\mathbf{X}_i$ denote the random vector $(X_{i,1},\ldots,X_{i,d})^\intercal$ and, for $r\geq0$, let $u:\mathbb{R}^d\rightarrow\mathbb{R}^+$ be such that $\mathbb{E}|X_{ij}^{r}u(\mathbf{X}_i)|<\infty$ for all $i=1,\ldots,n$ and $j=1,\ldots,d$. Assume that $\Sigma$ is non-negative definite.  Then, for $\theta\in(0,1)$,
\begin{align}\mathbb{E}\bigg|u(\mathbf{X}_i)\frac{\partial^tf}{\prod_{j=1}^{t}\partial w_{i_j}}(\mathbf{W}_{\theta}^{(i)})\bigg| &\leq \frac{h_{t,m}}{t}\Big[A\mathbb{E}[u(\mathbf{X}_i)]+B\sum_{j=1}^d\big(2^{3r/2}\mathbb{E}[u(\mathbf{X}_i)]\mathbb{E}|W_j|^{r} \nonumber \\
\label{cbhsxx}&\quad+\mathbb{E}|X_{ij}^{r}u(\mathbf{X}_i)|+2^{r/2}\mu_{r,\sigma_j}\mathbb{E}[u(\mathbf{X}_i)]\big)\Big], \\
\mathbb{E}\bigg|u(\mathbf{X}_i)\frac{\partial^3\psi_{abc}}{\partial w_j\partial w_k\partial w_l}(\mathbf{W}_{\theta}^{(i)})\bigg| &\leq \frac{h_{6,m}}{18}\Big[A\mathbb{E}[u(\mathbf{X}_i)]+B\sum_{j=1}^d\big(12^{r/2}\mathbb{E}[u(\mathbf{X}_i)]\mathbb{E}|W_j|^{r} \nonumber \\
\label{cbhsxx6}&\quad+\mathbb{E}|X_{ij}^{r}u(\mathbf{X}_i)|+2\cdot3^{r/2}\mu_{r,\sigma_{j}}\mathbb{E}[u(\mathbf{X}_i)]\big)\Big],
\end{align}
where the inequalities are for $\mathbf{g}$ in the classes $C_P^{t,m}(\mathbb{R}^d)$ and $C_P^{6,m}(\mathbb{R}^d)$, respectively, and $\mathbf{W}_\theta^{(i)}=\mathbf{W}^{(i)}+\theta n^{-1/2}\mathbf{X}_i. $ Inequality (\ref{cbhsxx}) is valid for $n\geq8$, whilst inequality (\ref{cbhsxx6}) holds for $n\geq12$.
\end{lemma}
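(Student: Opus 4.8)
The plan is to reduce the lemma to two ingredients: pointwise polynomial-growth bounds for the derivatives of the Stein solutions $f$ and $\psi_{abc}$, and a conditioning step exploiting the independence of $\mathbf{W}^{(i)}$ and $\mathbf{X}_i$. First I would establish, for any multi-index of order $t$,
\begin{equation*}\bigg|\frac{\partial^t f}{\prod_{j=1}^t\partial w_{i_j}}(\mathbf{w})\bigg|\le\frac{h_{t,m}}{t}\bigg(A+2^{r/2}B\sum_{j=1}^d\big(|w_j|^r+\mu_{r,\sigma_j}\big)\bigg),\end{equation*}
\begin{equation*}\bigg|\frac{\partial^3\psi_{abc}}{\partial w_j\partial w_k\partial w_l}(\mathbf{w})\bigg|\le\frac{h_{6,m}}{18}\bigg(A+3^{r/2}B\sum_{q=1}^d\big(|w_q|^r+2\mu_{r,\sigma_q}\big)\bigg),\end{equation*}
and then substitute $\mathbf{w}=\mathbf{W}_\theta^{(i)}$, multiply by $u(\mathbf{X}_i)$, and take expectations.

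To obtain the first bound I would differentiate the Mehler representation (\ref{mvnsolnh}) under the integral sign, writing, for $\boldsymbol{\nu}$ with $|\boldsymbol{\nu}|=t$,
\begin{equation*}\frac{\partial^t f}{\prod_{j=1}^t\partial w_{i_j}}(\mathbf{w})=-\int_0^\infty\mathrm{e}^{-ts}\,\mathbb{E}\big[\phi_{\boldsymbol{\nu}}\big(\mathrm{e}^{-s}\mathbf{w}+\sqrt{1-\mathrm{e}^{-2s}}\,\mathbf{Z}\big)\big]\,\mathrm{d}s,\end{equation*}
where $\phi=h(\mathbf{g}(\cdot))$. Lemma \ref{bell lem} gives $|\phi_{\boldsymbol{\nu}}|\le h_{t,m}P$ with $P(\mathbf{w})=A+B\sum_j|w_j|^r$, so only the smoothed moment $\mathbb{E}|\mathrm{e}^{-s}w_j+\sqrt{1-\mathrm{e}^{-2s}}\,Z_j|^r$ needs care. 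As $(\mathrm{e}^{-s},\sqrt{1-\mathrm{e}^{-2s}})$ is a unit vector, Cauchy--Schwarz gives $|\mathrm{e}^{-s}w_j+\sqrt{1-\mathrm{e}^{-2s}}\,Z_j|\le(w_j^2+Z_j^2)^{1/2}$, whence $(w_j^2+Z_j^2)^{r/2}\le2^{r/2}(|w_j|^r+|Z_j|^r)$, a bound independent of $s$; taking expectations and using $\int_0^\infty\mathrm{e}^{-ts}\,\mathrm{d}s=1/t$ gives the claim, which by Remark \ref{remm1} is the $m\ge1$ analogue of the bounds of \cite{gaunt normal,GauntSut}. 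For $\psi_{abc}$, which solves (\ref{234multinor}) with right-hand side $\partial^3_{abc}f$, I would compose the two Ornstein--Uhlenbeck smoothings (of $\psi$ and of $f$); this yields an argument $a_1w_q+a_2Z_q+a_3Z_q'$ with $a_1^2+a_2^2+a_3^2=1$ and $\mathbf{Z},\mathbf{Z}'$ independent, so a three-term Cauchy--Schwarz produces the factor $3^{r/2}$ and two independent Gaussian contributions (hence the coefficient $2\mu_{r,\sigma_q}$), while the double integral $\int_0^\infty\!\int_0^\infty\mathrm{e}^{-3s-6u}\,\mathrm{d}u\,\mathrm{d}s=1/18$ supplies the prefactor.

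For the final step, the $A$ and $\mu_{r,\sigma_j}$ terms factor out $\mathbb{E}[u(\mathbf{X}_i)]$ at once, giving the first and last terms of (\ref{cbhsxx}) and (\ref{cbhsxx6}). For the $|w_j|^r$ term I would write the $j$-th component of $\mathbf{W}_\theta^{(i)}$ as $W_j^{(i)}+\theta n^{-1/2}X_{ij}$, apply $|a+b|^r\le2^r(|a|^r+|b|^r)$, and use independence of $\mathbf{W}^{(i)}$ and $\mathbf{X}_i$ to factor $\mathbb{E}[u(\mathbf{X}_i)|W_j^{(i)}|^r]=\mathbb{E}[u(\mathbf{X}_i)]\,\mathbb{E}|W_j^{(i)}|^r$. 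The leftover term $2^r\theta^rn^{-r/2}\mathbb{E}[|X_{ij}|^ru(\mathbf{X}_i)]$ has its coefficient collapsed to exactly $1$ by $\theta\le1$ together with $n\ge8$ (so $n^{-r/2}\le2^{-3r/2}$) in the $f$ case and $n\ge12$ (so $n^{-r/2}\le12^{-r/2}$) in the $\psi$ case; the arithmetic is arranged precisely so that $2^{r/2}\cdot2^r\cdot2^{-3r/2}=1$ and $3^{r/2}\cdot2^r\cdot12^{-r/2}=1$.

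The hard part will be the moment comparison $\mathbb{E}|W_j^{(i)}|^r\le\mathbb{E}|W_j|^r$ that turns $\mathbb{E}|W_j^{(i)}|^r$ into the stated $\mathbb{E}|W_j|^r$. For $r\ge1$ this is clean: since $\mathbb{E}[X_{ij}]=0$ we have $W_j^{(i)}=\mathbb{E}[W_j\mid\mathbf{W}^{(i)}]$, and conditional Jensen for the convex map $|\cdot|^r$ with the tower property gives the inequality and the constants $2^{3r/2}$ and $12^{r/2}$ on $\mathbb{E}|W_j|^r$. For $0\le r<1$ the inequality can fail, so this range needs a separate argument based on the subadditivity $|a+b|^r\le|a|^r+|b|^r$ and the variance-reducing effect of deleting the $i$-th summand, and the delicate point is to check that the resulting estimates remain dominated by the single uniform expression in the statement. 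Carrying out this case analysis and the attendant constant bookkeeping, uniformly in $\theta\in(0,1)$, is the bulk of the routine work I would defer to the Supplementary Material.
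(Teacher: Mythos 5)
Your proposal follows essentially the same route as the paper: the paper quotes the pointwise bounds on $\frac{\partial^t f}{\prod_{j=1}^t\partial w_{i_j}}$ and $\frac{\partial^3\psi_{abc}}{\partial w_j\partial w_k\partial w_l}$ (your two displayed inequalities, with exactly these constants) as Lemma \ref{cor28}, obtained from Propositions 2.1 and 2.2 of \cite{GauntSut} upgraded to $m\geq1$ via Lemma \ref{bell lem} and Remark \ref{remm1}, and then evaluates at $\mathbf{W}_\theta^{(i)}$, splits $W_{j,\theta}^{(i)}=W_j^{(i)}+\theta n^{-1/2}X_{ij}$ with $|a+b|^r\leq 2^r(|a|^r+|b|^r)$, uses independence of $X_{ij}$ and $W_j^{(i)}$, and absorbs the constants via $n\geq8$ (resp.\ $n\geq12$) exactly as you describe. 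The one point where you diverge is the moment comparison: the paper dispatches $\mathbb{E}|W_j^{(i)}|^{r}\leq\mathbb{E}|W_j|^{r}$ in one line by citing \cite[p.\ 1501]{gaunt normal}, with no case distinction in $r$, whereas you correctly observe that the conditional-Jensen argument only covers $r\geq1$ and that the small-$r$ range would need separate justification --- a legitimate caution, but not a case analysis the paper itself carries out.
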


 
\noindent{\emph{Proof of Theorem \ref{theoremsec21}.}} 
Taylor expanding $\frac{\partial f}{\partial w_j}(\mathbf{W})$ about $\mathbf{W}^{(i)}$, using independence and that $\mathbb{E}[X_{ij}]=0$ for all $i,j$ gives
\begin{align*}&\sum_{j=1}^d\mathbb{E}\bigg[W_j\frac{\partial f}{\partial w_j}(\mathbf{W})\bigg]=\frac{1}{\sqrt{n}}\sum_{i=1}^n\sum_{j=1}^d\mathbb{E}\bigg[X_{ij}\frac{\partial f}{\partial w_j}(\mathbf{W})\bigg] \\
&=\frac{1}{\sqrt{n}}\sum_{i=1}^n\sum_{j=1}^d\mathbb{E}[X_{ij}]\mathbb{E}\bigg[\frac{\partial f}{\partial w_j}(\mathbf{W}^{(i)})\bigg]+\frac{1}{n}\sum_{i=1}^n\sum_{j,k=1}^d\mathbb{E}[X_{ij}X_{ik}]\mathbb{E}\bigg[\frac{\partial^2f}{\partial w_j\partial w_k}(\mathbf{W}^{(i)})\bigg]+R_1 \\
&
=\sum_{j,k=1}^d\sigma_{jk}\mathbb{E}\bigg[\frac{\partial^2f}{\partial w_j\partial w_k}(\mathbf{W})\bigg]+R_1+R_2,
\end{align*}
where
\begin{align*}|R_1|&\leq\frac{1}{2n^{3/2}}\sum_{i=1}^n\sum_{j,k,l=1}^d\mathbb{E}\bigg|X_{ij}X_{ik}X_{il}\frac{\partial^3f}{\partial w_j\partial w_k\partial w_l}(\mathbf{W}_{\theta_1}^{(i)})\bigg|, \\
|R_2|&\leq\frac{1}{n^{3/2}}\sum_{i=1}^n\sum_{j,k,l=1}^d|\mathbb{E}[X_{ij}X_{ik}]|\mathbb{E}\bigg|X_{il}\frac{\partial^3f}{\partial w_j\partial w_k\partial w_l}(\mathbf{W}_{\theta_2}^{(i)})\bigg|,
\end{align*}
and $\mathbf{W}_{\theta}^{(i)}=\mathbf{W}^{(i)}+\theta n^{-1/2}\mathbf{X}_{i}$ for some $\theta\in(0,1)$. Here we used that $n^{-1}\sum_{i=1}^n\mathbb{E}[X_{ij}X_{ik}]=\mathbb{E}[W_jW_k]=\sigma_{jk}$.  Bounding the expectations in $|R_1|$ and $|R_2|$ using Lemma \ref{cbwbshc} gives that
\begin{align}&|\mathbb{E}[h(\mathbf{g}(\mathbf{W}))]-\mathbb{E}[h(\mathbf{g}(\mathbf{Z}))]|\leq |\mathbb{E}[\nabla^\intercal\Sigma\nabla f(\mathbf{W})-\mathbf{W}^\intercal\nabla f(\mathbf{W})]|\leq |R_1|+|R_2|\nonumber\\
&\leq\frac{h_{3,m}}{6n^{3/2}}\sum_{i=1}^n\sum_{j,k,l=1}^d\bigg\{A\mathbb{E}|X_{ij}X_{ik}X_{il}|+B\sum_{t=1}^d \big(2^{3r/2}\mathbb{E}|X_{ij}X_{ik}X_{il}|\mathbb{E}|W_t|^{r}\nonumber\\
&\quad+\mathbb{E}|X_{ij}X_{ik}X_{il}X_{it}^{r}|+2^{r/2}\mu_{r,\sigma_t}\mathbb{E}|X_{ij}X_{ik}X_{il}|\big)+2|\mathbb{E}[X_{ij}X_{ik}]|\bigg[A\mathbb{E}|X_{il}| \nonumber\\
&\quad+B\sum_{t=1}^d \big(2^{3r/2}\mathbb{E}|X_{il}|\mathbb{E}|W_t|^{r}+\mathbb{E}|X_{il}X_{it}^{r}|+2^{r/2}\mu_{r,\sigma_t}\mathbb{E}|X_{il}|\big)\bigg]\bigg\}.\nonumber
\end{align}
The simplified bound (\ref{genbd1}) is obtained by applying the following considerations.  
We use Young's inequality for products ($ab\leq a^p/p+b^q/q$, $a,b\geq0$, $p,q>1$ such that $p^{-1}+q^{-1}=1$) to bound expectations such as $\mathbb{E}|X_{ij}X_{ik}X_{il}X_{it}^{r}|\leq c_1\mathbb{E}|X_{ij}|^{r+3}+c_2\mathbb{E}|X_{ik}|^{r+3}+c_3\mathbb{E}|X_{il}|^{r+3}+c_4\mathbb{E}|X_{it}|^{r+3}$, where $c_1+c_2+c_3+c_4=1$. We also use Young's inequality for products to bound quantities such as $|\mathbb{E}[X_{ij}X_{ik}]|\mathbb{E}|X_{il}X_{it}^{r}|\leq\mathbb{E}|X_{ij}X_{ik}|\mathbb{E}|X_{il}X_{it}^{r}|\leq c_1'\mathbb{E}|X_{ij}|^{r+3}+c_2'\mathbb{E}|X_{ik}|^{r+3}+c_3'\mathbb{E}|X_{il}|^{r+3}+c_4'\mathbb{E}|X_{it}|^{r+3})$, where $c_1'+c_2'+c_3'+c_4'=1$.
 \hfill $\Box$

\vspace{2mm}

\noindent{\emph{Proof of Theorem \ref{thmsec2}.}} (i) Suppose that $\mathbf{g}$ is an even function. By a similar argument to the one used in the proof of Theorem \ref{theoremsec21}, 
\begin{align*}\sum_{j=1}^d\mathbb{E}\bigg[W_j\frac{\partial f}{\partial w_j}(\mathbf{W})\bigg]
=\frac{1}{n}\sum_{i=1}^n\sum_{j,k=1}^d\mathbb{E}[X_{ij}X_{ik}]\mathbb{E}\bigg[\frac{\partial^2f}{\partial w_j\partial w_k}(\mathbf{W})\bigg]+N_1+N_2+R_1 +R_2,
\end{align*}
where
\begin{align*}N_1&=\frac{1}{2n^{3/2}}\sum_{i=1}^n\sum_{j,k,l=1}^d\mathbb{E}[X_{ij}X_{ik}X_{il}]\mathbb{E}\bigg[\frac{\partial^3f}{\partial w_j\partial w_k\partial w_l}(\mathbf{W}^{(i)})\bigg],\\
N_2&=-\frac{1}{n^{3/2}}\sum_{i=1}^n\sum_{j,k,l=1}^d\mathbb{E}[X_{ij}X_{ik}]\mathbb{E}\bigg[X_{il}\frac{\partial^3f}{\partial w_j\partial w_k\partial w_l}(\mathbf{W})\bigg],\\
|R_1|&\leq \frac{1}{6n^2}\sum_{i=1}^n\sum_{j,k,l,t=1}^d\mathbb{E}\bigg|X_{ij}X_{ik}X_{il}X_{it}\frac{\partial^4f}{\partial w_j\partial w_k\partial w_l\partial w_t}(\mathbf{W}_{\theta_1}^{(i)})\bigg|, \\
|R_2|&\leq \frac{1}{2n^2}\sum_{i=1}^n\sum_{j,k,l,t=1}^d|\mathbb{E}[X_{ij}X_{ik}]|\mathbb{E}\bigg|X_{il}X_{it}\frac{\partial^4f}{\partial w_j\partial w_k\partial w_l\partial w_t}(\mathbf{W}_{\theta_2}^{(i)})\bigg|.
\end{align*}
We can write $N_1$ and $N_2$ as
\begin{align*}N_1&=\frac{1}{2n^{3/2}}\sum_{i=1}^n\sum_{j,k,l=1}^d\mathbb{E}[X_{ij}X_{ik}X_{il}]\mathbb{E}\bigg[\frac{\partial^3f}{\partial w_j\partial w_k\partial w_l}(\mathbf{W})\bigg]+R_3,\\
N_2&=-\frac{1}{n^{3/2}}\sum_{i=1}^n\sum_{j,k,l=1}^d\mathbb{E}[X_{ij}X_{ik}]\mathbb{E}[X_{il}]\mathbb{E}\bigg[\frac{\partial^3f}{\partial w_j\partial w_k\partial w_l}(\mathbf{W}^{(i)})\bigg]+R_4=R_4,
\end{align*}
where
\begin{align*}|R_3|&\leq \frac{1}{2n^2}\sum_{i=1}^n\sum_{j,k,l,t=1}^d|\mathbb{E}[X_{ij}X_{ik}X_{il}]|\mathbb{E}\bigg|X_{it}\frac{\partial^4f}{\partial w_j\partial w_k\partial w_l\partial w_t}(\mathbf{W}_{\theta_3}^{(i)})\bigg|,\\
|R_4|&\leq\frac{1}{n^2}\sum_{i=1}^n\sum_{j,k,l,t=1}^d|\mathbb{E}[X_{ij}X_{ik}]|\mathbb{E}\bigg|X_{il}X_{it}\frac{\partial^4f}{\partial w_j\partial w_k\partial w_l\partial w_t}(\mathbf{W}_{\theta_4}^{(i)})\bigg|.
\end{align*}
Combining bounds gives that
\begin{align}|\mathbb{E}[h(\mathbf{g}(\mathbf{W}))]-\mathbb{E}[h(\mathbf{g}(\mathbf{Z}))]|&\leq \frac{1}{2n^{3/2}}\sum_{i=1}^n\sum_{j,k,l=1}^d|\mathbb{E}[X_{ij}X_{ik}X_{il}]|\bigg|\mathbb{E}\bigg[\frac{\partial^3f}{\partial w_j\partial w_k\partial w_l}(\mathbf{W})\bigg]\bigg|\nonumber\\
\label{nearlyeven}&\quad+|R_1|+|R_2|+|R_3|+|R_4|.
\end{align}
To obtain the desired $O(n^{-1})$ bound we need to show that $\mathbb{E}[\frac{\partial^{3} f}{\partial w_j\partial w_k\partial w_l}(\mathbf{W})]$ is of order $n^{-1/2}$.
To do this, we consider the
Stein equation 
\begin{equation*}\nabla^\intercal\Sigma\nabla \psi_{jkl}(\mathbf{w})-\mathbf{w}^\intercal\nabla \psi_{jkl}(\mathbf{w})=\frac{\partial^{3} f}{\partial w_j\partial w_k\partial w_l}(\mathbf{w})-\mathbb{E}\bigg[\frac{\partial^{3} f}{\partial w_j\partial w_k\partial w_l}(\mathbf{Z})\bigg],
\end{equation*}
where $\mathbf{Z}\sim\mathrm{N}_d(\mathbf{0},\Sigma)$. Since $\mathbf{g}$ is an even function, the solution $f$, as given by (\ref{mvnsolnh}), is an even function (this is a simple generalisation of \cite[Lemma 3.2]{gaunt normal}).  Therefore $\mathbb{E}[\frac{\partial^{3} f}{\partial w_j\partial w_k\partial w_l}(\mathbf{Z})]=0$, and so
\begin{align}\mathbb{E}\bigg[\frac{\partial^{3} f}{\partial w_j\partial w_k\partial w_l}(\mathbf{W})\bigg]&=\mathbb{E}[\nabla^\intercal\Sigma\nabla \psi_{jkl}(\mathbf{W})-\mathbf{W}^\intercal\nabla \psi_{jkl}(\mathbf{W})]\nonumber\\
&\leq \frac{1}{n^{3/2}}\sum_{\alpha=1}^n\sum_{a,b,c=1}^d\bigg\{\frac{1}{2}\mathbb{E}\bigg|X_{\alpha a}X_{\alpha b}X_{\alpha c}\frac{\partial^3\psi_{jkl}}{\partial w_a\partial w_b\partial w_c}(\mathbf{W}_{\theta_5}^{(\alpha)})\bigg| \nonumber\\
\label{psiex}&\quad+|\mathbb{E}[X_{\alpha a}X_{\alpha b}]|\mathbb{E}\bigg|X_{\alpha c}\frac{\partial^3\psi_{jkl}}{\partial w_a\partial w_b\partial w_c}(\mathbf{W}_{\theta_6}^{(\alpha)})\bigg|\bigg\},
\end{align}
where we obtained the inequality by arguing as we did in the proof of Theorem \ref{theoremsec21}. Bounding $|R_1|$, $|R_2|$, $|R_3|$, $|R_4|$ and (\ref{psiex}) using the inequalities of Lemma \ref{cbwbshc} and applying these bounds to (\ref{nearlyeven}) now yields the bound 
\[|\mathbb{E}[h(\mathbf{g}(\mathbf{W}))]-\mathbb{E}[h(\mathbf{g}(\mathbf{Z}))]|\leq K_1'+K_2',
\]
where
\begin{align}K_1'=&\frac{h_{4,m}}{24n^2}\sum_{i=1}^n\sum_{j,k,l,t=1}^d\bigg\{A\mathbb{E}|X_{ij}X_{ik}X_{il}X_{it}|+B\sum_{q=1}^d \big(2^{3r/2}\mathbb{E}|X_{ij}X_{ik}X_{il}X_{it}|\mathbb{E}|W_q|^{r}\nonumber\\
&+\mathbb{E}|X_{ij}X_{ik}X_{il}X_{it}X_{iq}^{r}|+2^{r/2}\mu_{r,\sigma_q}\mathbb{E}|X_{ij}X_{ik}X_{il}X_{it}|\big)+9|\mathbb{E}[X_{ij}X_{ik}]|\bigg[A\mathbb{E}|X_{il}X_{it}| \nonumber\\
&+B\sum_{q=1}^d \big(2^{3r/2}\mathbb{E}|X_{il}X_{it}|\mathbb{E}|W_q|^{r}+\mathbb{E}|X_{il}X_{it}X_{iq}^{r}|+2^{r/2}\mu_{r,\sigma_q}\mathbb{E}|X_{il}X_{it}|\big)\bigg]\nonumber\\
&+3|\mathbb{E}[X_{ij}X_{ik}X_{il}]|\bigg[A\mathbb{E}|X_{it}|+B\sum_{q=1}^d \big(2^{3r/2}\mathbb{E}|X_{it}|\mathbb{E}|W_q|^{r}+\mathbb{E}|X_{it}X_{iq}^{r}|\nonumber \\
&+2^{r/2}\mu_{r,\sigma_q}\mathbb{E}|X_{it}|\big)\bigg]\bigg\},\nonumber\\
K_2'=&\frac{h_{6,m}}{72n^3}\sum_{i=1}^n\sum_{j,k,l=1}^d|\mathbb{E}[X_{i j}X_{i k}X_{i l}]|\sum_{\alpha=1}^n\sum_{a,b,c=1}^d\bigg\{A\mathbb{E}|X_{\alpha a}X_{\alpha b}X_{\alpha c}|\nonumber\\
&+B\sum_{q=1}^d\big(12^{r/2}\mathbb{E}|X_{\alpha a}X_{\alpha b}X_{\alpha c}|\mathbb{E}|W_q|^{r}+\mathbb{E}|X_{\alpha a}X_{\alpha b}X_{\alpha c}X_{\alpha q}^{r}|\nonumber\\
&+2\cdot 3^{r/2}\mu_{r,\sigma_q}\mathbb{E}|X_{\alpha a}X_{\alpha b}X_{\alpha c}|\big) +2|\mathbb{E}[X_{\alpha a}X_{\alpha b}]|\bigg[A\mathbb{E}|X_{\alpha c}|\nonumber\\
&+B\sum_{q=1}^d \big(12^{r/2}\mathbb{E}|X_{\alpha c}|\mathbb{E}|W_q|^{r}+\mathbb{E}|X_{\alpha c}X_{\alpha q}^{r}|+2\cdot 3^{r/2}\mu_{r,\sigma_q}\mathbb{E}|X_{\alpha c}|\big)\bigg]\bigg\}. \nonumber
\end{align}
We now simplify the bound as we did in the proof of Theorem \ref{theoremsec21}.

\vspace{2mm} 

\noindent{(ii)} 
Observe that the bound (\ref{nearlyeven}) reduces to $|R_1|+|R_2|+|R_3|+|R_4|$ when $\mathbb{E}[X_{ij}X_{ik}X_{il}]=0$ for all $1\leq i\leq n$ and $1\leq j,k,l\leq d$.  Therefore, when we impose this condition, $K_2'=0$ and the final term in $K_1'$ involving a multiple of $\mathbb{E}[X_{ij}X_{ik}X_{il}]$ vanishes. Thus, we no longer require $\mathbf{g}$ to be an even function and in this case, the classes of functions for $\mathbf{g}$ and $h$ weaken to $\mathbf{g}\in C_P^{4,m}(\mathbb{R}^d)$ and $h\in C_b^4(\mathbb{R}^m)$. 
 \qed

\vspace{2mm}

\noindent{\emph{Proof of Theorems \ref{indep1} and \ref{indep2}.}} For Theorem \ref{indep1}, we set $d=m=1$ in the bounds for $|R_1|$ and $|R_2|$ given in the proof of Theorem \ref{theoremsec21} and bound the expectations using the univariate bounds of Lemma 3.6 of \cite{GauntSut} (with a simple generalisation from unit variance to general variance $\mathrm{Var}(W)=\sigma^2$). Similarly, for Theorem \ref{indep2}, we set $d=m=1$ in the bound that results from combining inequalities (\ref{nearlyeven}) and (\ref{psiex}) and  bound the expectations using the univariate bounds of Lemma 3.6 of \cite{GauntSut} (with the same minor generalisation). Some minor simplifications of the resulting bounds (using the assumptions $n\geq8$ and $n\geq12$) yields the desired bounds. \qed

\section{Proofs of Theorems \ref{thm2.1}--\ref{thm2.3univ}}\label{sec5}


\noindent{\emph{Proof of Theorem \ref{thm2.1}}.} Let $\mathbf{g}_{t,n}(\mathbf{w})=n^{t/2}(\mathbf{f}(n^{-1/2}\mathbf{w})-\mathbf{f}(\mathbf{0}))$ and let $\mathbf{W}=\sqrt{n}\overline{\mathbf{X}}$ so that $\mathbf{T}_{t,n}=\mathbf{g}_{t,n}(\mathbf{W}).$ Let $\mathbf{Z}=(Z_1,\ldots,Z_d)^\intercal\sim\mathrm{N}_d(\mathbf{0},\Sigma)$. Then, by the triangle inequality,
	\begin{align}
		\Delta_h(\mathbf{T}_{t,n},\mathbf{Y}_t)&\leq|\mathbb{E}[h(\mathbf{g}_{t,n}(\mathbf{W}))]-\mathbb{E}[h(\mathbf{g}_{t,n}(\mathbf{Z}))]|+|\mathbb{E}[h(\mathbf{g}_{t,n}(\mathbf{Z}))]-\mathbb{E}[h(\mathbf{Y}_t)]|\nonumber\\
		&:=R_1+R_2.\label{triangleineq}
	\end{align}
 We begin by bounding $R_2$. By Taylor expanding $\mathbf{f}(n^{-1/2}\mathbf{w})$ about $\mathbf{0}$, and using that $\frac{\partial\mathbf{f}}{\partial w_{i_1}}(\mathbf{0})=\cdots=\frac{\partial^{t-1}\mathbf{f}}{\prod_{j=1}^{t-1}\partial w_{i_j}}(\mathbf{0})=\mathbf{0}$ for all $1\leq i_1,\ldots,i_{t-1}\leq d$ (in the case $t\geq2$), we have
\begin{align}\mathbb{E}[h(\mathbf{g}_{t,n}(\mathbf{Z}))]
	=\mathbb{E}\bigg[h\bigg(\mathbf{Y}_t+\frac{1}{(t+1)!\sqrt{n}}\sum_{i_1,\ldots,i_{t+1}=1}^{d}Z_{i_1}\cdots Z_{i_{t+1}}\frac{\partial^{t+1}\mathbf{f}}{\partial w_{i_1} \cdots\partial w_{i_{t+1}}}\bigg(\frac{\theta\mathbf{Z}}{\sqrt{n}}\bigg)\bigg)\bigg],\nonumber
\end{align} 
for some $\theta\in(0,1)$. 
Then, by the mean value theorem and the triangle inequality,
\begin{align}
	R_2&\leq\frac{|h|_1}{(t+1)!\sqrt{n}}\sum_{k=1}^{m}\sum_{i_1,\ldots,i_{t+1}=1}^{d}\mathbb{E}\bigg|Z_{i_1}\cdots Z_{i_{t+1}}\frac{\partial^{t+1}f_{k}}{\partial w_{i_1} \cdots\partial w_{i_{t+1}}}\bigg(\frac{\theta\mathbf{Z}}{\sqrt{n}}\bigg)\bigg|\nonumber\\
	&\leq\frac{A_{t+1}m|h|_1}{(t+1)!\sqrt{n}}\mathbb{E}\bigg[\bigg(1+\sum_{j=1}^{d}\bigg|\frac{Z_j}{\sqrt{n}}\bigg|^{r_{t+1}}\bigg)\sum_{i_1,\ldots,i_{t+1}=1}^{d}|Z_{i_1}\cdots Z_{i_{t+1}}|\bigg]\nonumber\\
	&\leq\frac{A_{t+1}m|h|_1}{(t+1)!\sqrt{n}}\mathbb{E}\bigg[\bigg(1+\sum_{j=1}^{d}\bigg|\frac{Z_j}{\sqrt{n}}\bigg|^{r_{t+1}}\bigg)d^t\sum_{\ell=1}^{d}|Z_\ell|^{t+1}\bigg]\nonumber\\
	&\leq\frac{A_{t+1}md^{t}|h|_1}{(t+1)!\sqrt{n}}\sum_{j=1}^{d}\bigg(\mathbb{E}|Z_j|^{t+1}+\frac{d}{n^{r_{t+1}/2}}\mathbb{E}|Z_j|^{r_{t+1}+t+1}\bigg). \nonumber
\end{align}

We bound $R_1$ using the bound (\ref{genbd1}) of Theorem \ref{theoremsec21}. To apply that theorem, we need to find a dominating function $P$ such that $\mathbf{g}_{t,n}\in C_P^{3,m}(\mathbb{R}^d)$. Suppose that $\mathbf{f}\in\mathcal{C}_P^{1,2,m}(\mathbb{R}^d)$ for $t=1$, and $\mathbf{f}\in\mathcal{C}_P^{t,1,m}(\mathbb{R}^d)$ for $t\geq2$. Under these assumptions, we can show that a suitable dominating function is given by $P(\mathbf{w})=2C_{1,t}a_{n,d,r_t}^3d^{3t-4}(d+\sum_{i=1}^d|w_i|^{u_{1,t}})$, where the constants $C_{1,t}$ and $u_{1,t}$ are defined as in the statement of the theorem. With this dominating function, we would have the bound $R_1\leq h_{3,m}\mathcal{M}_{2,d}/\sqrt{n}$,
where $\mathcal{M}_{2,d}$ is defined as in the statement of the theorem. Summing up our bounds for $R_1$ and $R_2$ yields the desired bound (\ref{thm2.1bd}), and it now remains to show that $\mathbf{g}_{t,n}\in C_P^{3,m}(\mathbb{R}^d)$, with $P$ as defined above. This is verified in the SM. 
\qed

\vspace{2mm}

\noindent{\emph{Proof of Theorem \ref{thm2.2}.}} We will begin by bounding $R_2$ in (\ref{triangleineq}). By Taylor expanding $\mathbf{f}(n^{-1/2}\mathbf{w})$ about $\mathbf{0}$ and using that $\frac{\partial\mathbf{f}}{\partial w_{i_1}}(\mathbf{0})=\cdots=\frac{\partial^{t-1}\mathbf{f}}{\prod_{j=1}^{t-1}\partial w_{i_j}}(\mathbf{0})=\mathbf{0}$ for all $1\leq i_1,\ldots, i_{t-1}\leq d$ (in the case $t\geq2$), we have
\begin{align} \mathbb{E}[h(\mathbf{g}_{t,n}(\mathbf{Z}))]	&=\mathbb{E}\bigg[h\bigg(\mathbf{Y}_t+\frac{1}{(t+1)!\sqrt{n}}\sum_{i_1,\ldots,i_{t+1}=1}^{d}Z_{i_1}\cdots Z_{i_{t+1}}\frac{\partial^{t+1}\mathbf{f}}{\partial w_{i_1} \cdots\partial w_{i_{t+1}}}(\mathbf{0})\nonumber\\
	&\quad+\frac{1}{(t+2)!n}\sum_{i_1,\ldots,i_{t+2}=1}^{d}Z_{i_1}\cdots Z_{i_{t+2}}\frac{\partial^{t+2}\mathbf{f}}{\partial w_{i_1} \cdots\partial w_{i_{t+2}}}\bigg(\frac{\theta\mathbf{Z}}{\sqrt{n}}\bigg)\bigg)\bigg],\nonumber
\end{align}
for some $\theta\in(0,1)$. Then, by Taylor expanding $h$ about $\mathbf{Y}_t$, we have
\begin{align*}
	|R_2|=|\mathbb{E}[h(\mathbf{g}_{t,n}(\mathbf{Z}))]-\mathbb{E}[h(\mathbf{Y}_t)]|
	=|R_3+R_4|,
\end{align*}
where
\begin{align*}
	R_3&=\mathbb{E}\bigg[\sum_{k=1}^{m}\frac{\partial h}{\partial w_{k}}(\mathbf{Y}_t)\bigg(\frac{1}{\sqrt{n}(t+1)!}\sum_{i_1,\ldots,i_{t+1}=1}^{d}Z_{i_1}\cdots Z_{i_{t+1}}\frac{\partial^{t+1}f_{k}}{\partial w_{i_1} \cdots\partial w_{i_{t+1}}}(\mathbf{0})\\
	&\quad+\frac{1}{n(t+2)!}\sum_{i_1,\ldots,i_{t+2}=1}^{d}Z_{i_1}\cdots Z_{i_{t+2}}\frac{\partial^{t+2}f_{k}}{\partial w_{i_1} \cdots\partial w_{i_{t+2}}}\bigg(\frac{\theta\mathbf{Z}}{\sqrt{n}}\bigg)\bigg)\bigg], \\
	|R_4|&\leq\frac{1}{2}|h|_2\mathbb{E}\bigg[\bigg(\sum_{k=1}^{m}\bigg|\frac{1}{\sqrt{n}(t+1)!}\sum_{i_1,\ldots,i_{t+1}=1}^{d}Z_{i_1}\cdots Z_{i_{t+1}}\frac{\partial^{t+1}f_{k}}{\partial w_{i_1} \cdots\partial w_{i_{t+1}}}(\mathbf{0})\\
&\quad+\frac{1}{n(t+2)!}\sum_{i_1,\ldots,i_{t+2}=1}^{d}Z_{i_1}\cdots Z_{i_{t+2}}\frac{\partial^{t+2}f_{k}}{\partial w_{i_1} \cdots\partial w_{i_{t+2}}}\bigg(\frac{\theta\mathbf{Z}}{\sqrt{n}}\bigg)\bigg|\bigg)^2\bigg].
\end{align*}

We note that
\begin{align*}
	\mathbb{E}\bigg[\sum_{k=1}^{m}\frac{\partial h}{\partial w_{k}}(\mathbf{Y}_t)\bigg(\frac{1}{\sqrt{n}(t+1)!}\sum_{i_1,\ldots,i_{t+1}=1}^{d}Z_{i_1}\cdots Z_{i_{t+1}}\frac{\partial^{t+1}f_{k}}{\partial w_{i_1} \cdots\partial w_{i_{t+1}}}(\mathbf{0})\bigg)\bigg]=0,
\end{align*}
since the expectation is of the form $\mathbb{E}[\tau(\mathbf{Z})]$, where $\tau$ is an odd function and $\mathbf{Z}\sim \mathrm{N}_d(\mathbf{0},\Sigma)$ is distributed symmetrically around the origin (that is $\mathbf{Z}=_d-\mathbf{Z}$), meaning the expectation must be equal to zero. Therefore
\begin{align}
	|R_3|&=\frac{1}{(t+2)!n}\bigg|\mathbb{E}\bigg[\sum_{k=1}^{m}\frac{\partial h}{\partial w_{k}}(\mathbf{Y}_t)\sum_{i_1,\ldots,i_{t+2}=1}^{d}Z_{i_1}\cdots Z_{i_{t+2}}\frac{\partial^{t+2}f_{k}}{\partial w_{i_1} \cdots\partial w_{i_{t+2}}}\bigg(\frac{\theta\mathbf{Z}}{\sqrt{n}}\bigg)\bigg]\bigg|\nonumber\\
	&\leq\frac{|h|_1}{(t+2)!n}\sum_{k=1}^{m}\sum_{i_1,\ldots,i_{t+2}=1}^{d}\mathbb{E}\bigg|Z_{i_1}\cdots Z_{i_{t+2}}\frac{\partial^{t+2}f_{k}}{\partial w_{i_1} \cdots\partial w_{i_{t+2}}}\bigg(\frac{\theta\mathbf{Z}}{\sqrt{n}}\bigg)\bigg|\nonumber\\
	&\leq\frac{A_{t+2}m|h|_1}{(t+2)!n}\mathbb{E}\bigg[\bigg(1+\sum_{j=1}^{d}\bigg|\frac{Z_j}{\sqrt{n}}\bigg|^{r_{t+2}}\bigg)\sum_{i_1,\ldots,i_{t+2}=1}^{d}\big|Z_{i_1}\cdots Z_{i_{t+2}}\big|\bigg]\nonumber\\
	&\leq\frac{A_{t+2}md^{t+1}|h|_1}{(t+2)!n}\sum_{j=1}^{d}\bigg(\mathbb{E}|Z_j|^{t+2}+\frac{d}{n^{r_{t+2}/2}}\mathbb{E}|Z_j|^{r_{t+2}+t+2}\bigg).\nonumber
\end{align}
Now we bound $R_4$:
\begin{align}
	|R_4|&\leq\frac{m^2|h|_2}{2((t+1)!)^2n}\mathbb{E}\bigg[\bigg(A_{t+1}\sum_{i_1,\ldots,i_{t+1}=1}^{d}\big|Z_{i_1}\cdots Z_{i_{t+1}}\big|\nonumber\\
	&\quad+\frac{A_{t+2}}{(t+2)\sqrt{n}}\bigg(1+\sum_{j=1}^{d}|Z_j|^{r_{t+2}}\bigg)\sum_{i_1,\ldots,i_{t+2}=1}^{d}\big|Z_{i_1}\cdots Z_{i_{t+2}}\big|\bigg)^2\bigg]\nonumber\\
	&\leq\frac{m^2|h|_2}{2((t+1)!)^2n}\mathbb{E}\bigg[\bigg(A_{t+1}d^t\sum_{j=1}^{d}|Z_j|^{t+1}\nonumber\\
	&\quad+\frac{A_{t+2}d^{t+1}}{(t+2)\sqrt{n}}\bigg(\sum_{j=1}^{d}|Z_j|^{t+2}+d\sum_{j=1}^{d}|Z_j|^{r_{t+2}+t+2}\bigg)\bigg)^2\bigg]\nonumber\\
	&\leq\frac{m^2d^{2t+1}|h|_2}{((t+1)!)^2n}\sum_{j=1}^{d}\bigg[A_{t+1}^2\mathbb{E}|Z_j|^{2(t+1)}+\frac{2A_{t+2}^2d^2}{(t+2)^2n}\bigg(\mathbb{E}|Z_j|^{2(t+2)}+d^{2}\mathbb{E}|Z_j|^{2(r_{t+2}+t+2)}\bigg)\bigg]\nonumber.
\end{align}

We now bound $R_1$ from (\ref{triangleineq}) using the bound (\ref{boundthm2}) of part (i) of Theorem \ref{thmsec2}. To apply that bound, we need to find a dominating function $P$ such that $\mathbf{g}_{t,n}\in C^{6,m}_P(\mathbb{R}^d)$. Suppose that $\mathbf{f}\in\mathcal{C}_P^{2,4,m}(\mathbb{R}^d)$ for $t=2$ and $\mathbf{f}\in\mathcal{C}_P^{t,2,m}(\mathbb{R}^d)$ for $t\geq4$. Under these assumptions,  a suitable dominating function is given by $P(\mathbf{w})=2{C}_{2,t}a_{n,d,r_t}^6d^{6t-7}(d+\sum_{i=1}^{d}|w_i|^{u_{2,t}}),$ where, for $t\geq2$, the constants ${C}_{2,t}$ and $u_{2,t}$ are defined as in the statement of the theorem. This is verified in the SM. Applying the bound (\ref{boundthm2}) with this dominating function yields the bound
$R_1\leq (h_{4,m}\mathcal{K}_{3,d}+h_{6,m}\mathcal{K}_{4,d})/n$.
Summing up our bounds for $R_1$, $R_3$ and $R_4$ yields the desired bound (\ref{thm2.2bd}).
\qed

\vspace{2mm}
\noindent{\emph{Proof of Theorem \ref{thm2.3}.}} We have the same bound for $R_2$ as in Theorem \ref{thm2.2}, since the assumption that $\mathbf{f}$ is an even function is not used in obtaining this bound.
To bound $R_1$, we use the bound (\ref{boundtheorem}) from part (ii) of Theorem \ref{thmsec2}. To apply this theorem, we need to find a dominating function $P$ such that $\mathbf{g}_{t,n}\in C_P^{4,m}(\mathbb{R}^d).$ Suppose that $\mathbf{f}\in \mathcal{C}^{2,2,m}_P(\mathbb{R}^d)$ for $t=2$ and $\mathbf{f}\in \mathcal{C}_P^{t,2,m}(\mathbb{R}^d)$ for $t\geq4$. Under these assumptions, a suitable dominating function is given by $P(\mathbf{w})=2{C}_{3,t}a_{n,d,r_t}^4d^{4t-5}(d+\sum_{i=1}^{d}|w_i|^{u_{3,t}}),$ where ${C}_{3,t}$ and $u_{3,t}$ are defined as in the statement of the theorem. This is verified in the SM. With this dominating function, we obtain the bound $R_1\leq h_{4,m}\mathcal{K}_{5,d}/n$.
Summing up our bounds for $R_1$ and $R_2$ yields the bound (\ref{thm2.3bound}).
\qed

\vspace{2mm}

\noindent{\emph{Proof of Theorem \ref{thm2.1univ}.}} As in the multivariate case, we bound $R_1$ and $R_2$ from (\ref{triangleineq}). We can immediately bound $R_2$ by setting $d=m=1$ in the bound $m|h|_1\mathcal{M}_{1,d}/\sqrt{n}$. We bound $R_1$ by using the bound (\ref{a2bound}) of Theorem \ref{indep1}. To apply that bound, we need to find a dominating function $P$ such that $g_{t,n}\in C^{1,1}_{P}(\mathbb{R})$. For $f\in\mathcal{C}_P^{t,1,1}(\mathbb{R})$, a suitable dominating function is given by $P(w)=2A_t(1+|w|^{r_t+t-1})/(t-1)!$. This is verified in the SM. Bounding $R_1$ using inequality (\ref{a2bound}) with this dominating function yields the bound (\ref{thm2.4bd}). \qed

\vspace{2mm}

\noindent{\emph{Proof of Theorem \ref{thm2.2univ}.}} As in the proof of Theorem \ref{thm2.1univ}, we can immediately bound $R_2$ by $(\|h'\|\mathcal{K}_{1,1}+\|h''\|\mathcal{K}_{2,1})/n$. We bound $R_1$ by applying inequality (\ref{a.71}) from part (i) of Theorem \ref{indep2} with dominating function $P(w)=2{C}_{4,t}(1+|w|^{2(r_t+t-1)})$, which ensures that $g_{t,n}\in C_P^{2,1}(\mathbb{R})$ for $f\in\mathcal{C}_P^{t,2,1}(\mathbb{R})$; see the SM for a verification. Summing up our bounds for $R_1$ and $R_2$ yields the desired bound. \qed

\vspace{2mm}

\noindent{\emph{Proof of Theorem \ref{thm2.3univ}.}} We have the same bound for $R_2$ as in Theorem \ref{thm2.2univ}, since the assumption that $f$ is an even function is not used in obtaining this bound. We also use the same dominating function, $P(w)=2{C}_{4,t}(1+|w|^{2(r_t+t-1)})$, but we instead use the bound (\ref{a.72}) of part (ii) of Theorem \ref{indep2}  to bound $R_1$. Combining the bounds for $R_1$ and $R_2$ yields the desired bound. \qed
		
		\section*{Acknowledgements}
		 HS is supported by an EPSRC PhD Studentship.
		
		\footnotesize

\normalsize
		
\appendix

\section{Further proofs and calculations}\label{appa}

\subsection{Calculation of the covariance matrix from Example \ref{ex3.5}}

For fixed $j=1,\ldots,r$, $\pi_1(j),\ldots,\pi_n(j)$ are independent $\mathrm{Unif}\{1,\ldots,r\}$ random variables, and so we have
\begin{align*}\sigma_{jj}=\mathrm{Var}(W_j)=\frac{1}{\sigma_J^2n}\sum_{i=1}^n\mathrm{Var}(J(\pi_i(j))=\frac{1}{\sigma_J^2}\sum_{k=1}^r(J(k)-\overline{J})^2\frac{1}{r}=\frac{r-1}{r}.
\end{align*}
Now suppose $j\not=k$. As $\sum_{j=1}^rW_j=0$ and $W_1,\ldots,W_r$ are identically distributed, we have
\begin{equation*}0=\mathbb{E}\bigg[W_j\sum_{l=1}^rW_l\bigg]=\mathbb{E}[W_j^2]+\sum_{j\not=l}\mathbb{E}[W_jW_l]=\mathbb{E}[W_j^2]+(r-1)\mathbb{E}[W_jW_k].
\end{equation*}
Since $\mathbb{E}[W_j]=(r-1)/r$, rearranging yields $\mathbb{E}[W_jW_k]=-1/r$ for $j\not=k$. Since $\mathbb{E}[W_j]=0$, it therefore follows that $\sigma_{jk}=\mathrm{Cov}(W_j,W_k)=\mathbb{E}[W_jW_k]=-1/r$. \qed

\subsection{Proof of Lemma \ref{cbwbshc}}\label{applempf}

We first state the following lemma which contains bounds given in \cite{GauntSut}. We have generalised these bounds from the $m=1$ case to general $m\geq1$ by applying Lemma \ref{bell lem} and the considerations of Remark \ref{remm1}. 

\begin{lemma}[Gaunt and Sutcliffe \cite{GauntSut}, Propositions 2.1 and 2.2]\label{cor28} Let $P(\mathbf{w})=A+B\sum_{i=1}^d|w_i|^{r}$, where $r\geq 0$.  Let $\sigma_{i,i}=(\Sigma)_{i,i}$, $i=1,\ldots,d$. Let $f(=f_h)$ denote the solution (\ref{mvnsolnh}) of the multivariate normal Stein equation (\ref{mvng}), and let $\psi_{jkl}$ denote the solution of (\ref{234multinor}).  Suppose that $\Sigma$ is non-negative definite.

\vspace{2mm}

\noindent{(i)} Let $h\in C_b^t(\mathbb{R}^m)$ and $g\in C_P^{t,m}(\mathbb{R}^d)$ for $t\geq 1$.  Then, for $\mathbf{w}\in\mathbb{R}^d$,
\begin{align}\label{sigmaiii}\bigg|\frac{\partial^tf(\mathbf{w})}{\prod_{j=1}^t\partial w_{i_j}}\bigg|&\leq\frac{h_{t,m}}{t}\bigg[A+\sum_{i=1}^d2^{r/2}B\big(|w_i|^{r}+\mu_{r,\sigma_i}\big)\bigg].
\end{align}
\noindent{(ii)} Let $h\in C_b^{6}(\mathbb{R}^m)$ and $g\in C_P^{6,m}(\mathbb{R}^d)$.  Then, for $\mathbf{w}\in\mathbb{R}^d$,
\begin{align}\label{sigmaiii2}\bigg|\frac{\partial^3\psi_{jkl}(\mathbf{w})}{\partial w_a\partial w_b\partial w_c}\bigg|&\leq\frac{h_{6,m}}{18}\bigg[A+\sum_{i=1}^d3^{r/2}B\big(|w_i|^{r}+2\mu_{r,\sigma_i}\big)\bigg].
\end{align}
\end{lemma}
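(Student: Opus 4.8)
The plan is to work directly from the integral representation (\ref{mvnsolnh}) of the solution, reducing everything to the single derivative bound of Lemma \ref{bell lem}; as recorded in Remark \ref{remm1}, this is the only point at which passing from $m=1$ to $m\geq1$ enters, and it does so merely by replacing the factor $h_t=h_{t,1}$ used in \cite{gaunt normal,GauntSut} by $h_{t,m}$. Write $\phi=h\circ\mathbf{g}$, so that (\ref{mvnsolnh}) reads $f(\mathbf{w})=-\int_0^\infty\{\mathbb{E}[\phi(\mathrm{e}^{-s}\mathbf{w}+\sqrt{1-\mathrm{e}^{-2s}}\mathbf{Z})]-\mathbb{E}[\phi(\mathbf{Z})]\}\,\mathrm{d}s$. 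The constant term $\mathbb{E}[\phi(\mathbf{Z})]$ is annihilated by differentiation in $\mathbf{w}$.

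For part (i), I would differentiate under the integral sign $t$ times. Each differentiation of the Ornstein--Uhlenbeck argument contributes a factor $\mathrm{e}^{-s}$, so the chain rule yields
\[\frac{\partial^t f}{\prod_{j=1}^t\partial w_{i_j}}(\mathbf{w})=-\int_0^\infty\mathrm{e}^{-ts}\,\mathbb{E}\Big[\phi_{\boldsymbol{\nu}}\big(\mathrm{e}^{-s}\mathbf{w}+\sqrt{1-\mathrm{e}^{-2s}}\mathbf{Z}\big)\Big]\,\mathrm{d}s,\]
where $|\boldsymbol{\nu}|=t$. Lemma \ref{bell lem} bounds $|\phi_{\boldsymbol{\nu}}|\leq h_{t,m}P$. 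The argument $\mathbf{u}(s)=\mathrm{e}^{-s}\mathbf{w}+\sqrt{1-\mathrm{e}^{-2s}}\mathbf{Z}$ has coefficients whose squares sum to $\mathrm{e}^{-2s}+(1-\mathrm{e}^{-2s})=1$, so Cauchy--Schwarz gives $|u_i(s)|\leq(w_i^2+Z_i^2)^{1/2}$, whence $|u_i(s)|^r\leq 2^{r/2}(|w_i|^r+|Z_i|^r)$ using $x^2+y^2\leq 2\max\{x^2,y^2\}$. Taking the Gaussian expectation turns $\mathbb{E}|Z_i|^r$ into $\mu_{r,\sigma_i}$, and the remaining $\int_0^\infty\mathrm{e}^{-ts}\,\mathrm{d}s=1/t$ produces the prefactor $h_{t,m}/t$ and the exact form of (\ref{sigmaiii}).

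For part (ii), the solution $\psi_{jkl}$ of (\ref{234multinor}) admits the same integral representation with $\partial^3 f/(\partial w_j\partial w_k\partial w_l)$ in place of $\phi$. I would differentiate it three times (giving a factor $\mathrm{e}^{-3u}$ and the full sixth-order derivative of $f$), then insert the analogous representation of that sixth derivative, which carries a factor $\mathrm{e}^{-6s}$. The key simplification is the Ornstein--Uhlenbeck semigroup structure: composing the two flows, the argument becomes $\mathrm{e}^{-(s+u)}\mathbf{w}+\mathrm{e}^{-s}\sqrt{1-\mathrm{e}^{-2u}}\,\tilde{\mathbf{Z}}+\sqrt{1-\mathrm{e}^{-2s}}\,\mathbf{Z}$ with $\tilde{\mathbf{Z}},\mathbf{Z}$ independent copies of $\mathbf{Z}$, whose three coefficients again have squares summing to $1$. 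Applying Lemma \ref{bell lem} to bound the sixth-order derivative of $\phi$ by $h_{6,m}P$, and then the three-term Cauchy--Schwarz together with $x^2+y^2+z^2\leq 3\max\{\ldots\}$, yields $|\cdot|^r\leq 3^{r/2}(|w_i|^r+|\tilde Z_i|^r+|Z_i|^r)$; taking expectations gives precisely $3^{r/2}(|w_i|^r+2\mu_{r,\sigma_i})$, while the double integral $\int_0^\infty\!\int_0^\infty\mathrm{e}^{-3u-6s}\,\mathrm{d}s\,\mathrm{d}u=1/18$ supplies the prefactor $h_{6,m}/18$ in (\ref{sigmaiii2}).

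The routine but nontrivial step is justifying differentiation under the integral and the convergence of the $s$- and $(s,u)$-integrals near the origin; this is handled exactly as in the $m=1$ arguments of \cite{gaunt normal,GauntSut}, where the polynomial-growth hypotheses $\mathbf{g}\in C_P^{t,m}(\mathbb{R}^d)$ and $\mathbf{g}\in C_P^{6,m}(\mathbb{R}^d)$ guarantee the needed integrability. I expect no serious obstacle beyond bookkeeping: the only genuinely new ingredient is Lemma \ref{bell lem}, and since it reproduces the $m=1$ derivative bound verbatim up to the substitution of $h_{t,m}$ for $h_t$, every subsequent estimate of \cite{gaunt normal,GauntSut} transfers unchanged, which is exactly the reduction made explicit in Remark \ref{remm1}.
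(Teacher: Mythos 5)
Your proposal is correct and is essentially the paper's own argument: the paper obtains this lemma simply by citing Propositions 2.1 and 2.2 of \cite{GauntSut} and invoking Lemma \ref{bell lem} together with Remark \ref{remm1} to replace the factor $h_t=h_{t,1}$ by $h_{t,m}$, and your derivation merely unpacks those cited proofs --- the Ornstein--Uhlenbeck integral representation, the factors $\mathrm{e}^{-ts}$ and $\mathrm{e}^{-3u-6s}$ producing $1/t$ and $1/18$, and the two- and three-term Cauchy--Schwarz steps producing $2^{r/2}(|w_i|^r+\mu_{r,\sigma_i})$ and $3^{r/2}(|w_i|^r+2\mu_{r,\sigma_i})$ --- with $h_{t,m}$ substituted throughout via Lemma \ref{bell lem}. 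All constants and exponents in your reconstruction match the stated bounds, so there is no gap.
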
 

\noindent{\emph{Proof of Lemma \ref{cbwbshc}.}} Let us prove the first inequality.  From inequality (\ref{sigmaiii}) we have
\begin{align*}\mathbb{E}\bigg|u(\mathbf{X}_i)\frac{\partial^tf}{\prod_{j=1}^{t}\partial w_{i_j}}(\mathbf{W}_{\theta}^{(i)})\bigg| &\leq \frac{h_{t,m}}{t}\bigg[A\mathbb{E}[u(\mathbf{X}_i)]+\sum_{j=1}^d2^{r/2}B\Big(\mathbb{E}|u(\mathbf{X}_i)(W_{j,\theta}^{(i)})^{r}|\\
&\quad+\mu_{r,\sigma_j}\mathbb{E}|Z|^{r}\mathbb{E}[u(\mathbf{X}_i)]\Big)\bigg],
\end{align*}
where $W_{j,\theta}^{(i)}$ is the $j$-th component of $\mathbf{W}_{\theta}^{(i)}$.   By using the crude inequality $|a+b|^s\leq 2^s(|a|^s+|b|^s)$, which holds for any $s\geq0$, and independence of $X_{ij}$ and $W_j^{(i)}$, we have 
\begin{align*}2^{r/2}\mathbb{E}|u(\mathbf{X}_i)(W_{j,\theta}^{(i)})^{r}|&\leq 2^{3r/2}\mathbb{E}\bigg|u(\mathbf{X}_i)\bigg(|W_j^{(i)}|^{r}+\frac{\theta^{r}}{n^{r/2}}|X_{ij}|^{r}\bigg)\bigg| \\
&\leq 2^{3r/2}\mathbb{E}[u(\mathbf{X}_i)]\mathbb{E}|W_j^{(i)}|^{r}+\mathbb{E}|X_{ij}^{r}u(\mathbf{X}_i)|,
\end{align*}
where we made use of the assumption that  $n\geq8$ to simplify the bound. Using that $\mathbb{E}|W_j^{(i)}|^{r}\leq \mathbb{E}|W_j|^{r}$ (see \cite[p.\ 1501]{gaunt normal}) leads to the first inequality.  The proof of the other inequality is similar; we just use inequality (\ref{sigmaiii2}) instead of inequality (\ref{sigmaiii}). We also simplify the bound by using the assumption $n\geq12$, so that $3^{r/2}\cdot 2^r/n^{r/2}=12^{r/2}/n^{r/2}\leq1$.\qed

\subsection{Verification of dominating functions}\label{supb}

\noindent{\bf{Verification for the proof of Theorem \ref{thm2.1}.}} 
To ease notation, we prove the result under the assumption that $d/n^{r_t/2}\leq1$, for which $a_{d,n,r_t}=1$; it is simple to see that in the general case $n,d\geq1$, $r_t\geq0$ picking up an additional factor of $a_{n,d,r_t}^3$ yields the required dominating function.    
We consider the cases $t=1$, $t=2$ and $t\geq 3$ separately. We first consider the case $t\geq3$. By Taylor expanding $\frac{\partial\mathbf{f}}{\partial w_{i_1}}(n^{-1/2}\mathbf{w})$ about $\mathbf{0}$, and using that $\frac{\partial\mathbf{f}}{\partial w_{i_1}}(\mathbf{0})=\cdots=\frac{\partial^{t-1}\mathbf{f}}{\prod_{j=1}^{t-1}\partial w_{i_j}}(\mathbf{0})=\mathbf{0}$ for all $1\leq i_1,\ldots,i_{t-1}\leq d$ (in the case $t\geq2$), we obtain that
\begin{align*}
	\frac{\partial\mathbf{g}_{t,n}}{\partial w_{i_1}}(\mathbf{w})&=\frac{1}{(t-1)!}\sum_{i_2,\ldots,i_{t}=1}^{d}w_{i_2}\cdots w_{i_{t}}\frac{\partial^{t}\mathbf{f}}{\partial w_{i_1}\cdots\partial w_{i_{t}}}\bigg(\frac{\theta\mathbf{w}}{\sqrt{n}}\bigg),
\end{align*}
 for some $\theta\in(0,1)$ (which may change from line to line).  Since $\mathbf{f}\in\mathcal{C}_P^{t,1,m}(\mathbb{R}^d)$, we obtain the bound
\begin{align}
	\bigg|\frac{\partial g_{t,j}}{\partial w_{i_1}}(\mathbf{w})\bigg|	&\leq \frac{A_t}{(t-1)!}\bigg(1+\sum_{i=1}^{d}\bigg|\frac{w_i}{\sqrt{n}}\bigg|^{r_t}\bigg)\sum_{i_2,\ldots,i_t=1}^{d}|w_{i_2}\cdots w_{i_t}|\nonumber\\
	&\leq \frac{A_td^{t-2}}{(t-1)!}\sum_{i=1}^{d}\bigg(|w_i|^{t-1}+\frac{d|w_i|^{r_t+t-1}}{n^{r_t/2}}\bigg)\nonumber \\
	&\leq \frac{A_td^{t-2}}{(t-1)!}\sum_{i=1}^{d}\big(|w_i|^{t-1}+|w_i|^{r_t+t-1}\big), \label{2.11bound}
\end{align}
where $g_{t,j}$ represents the $j$-th component of the function $\mathbf{g}_{t,n}$, and in obtaining the last step we used the assumption that $d/n^{r_t/2}\leq1$. Proceeding similarly, we obtain
\begin{align}
	\bigg|\frac{\partial^2g_{t,j}}{\partial w_{i_1}\partial w_{i_2}}(\mathbf{w})\bigg|&=\frac{1}{(t-2)!}\bigg|\sum_{i_3,\ldots,i_{t}=1}^{d}w_{i_3}\cdots w_{i_{t}}\frac{\partial^tf_j}{\partial w_{i_1}\cdots\partial w_{i_{t}}}\bigg(\frac{\theta\mathbf{w}}{\sqrt{n}}\bigg)\bigg|\nonumber\\
	&\leq \frac{A_t}{(t-2)!}\bigg(1+\sum_{i=1}^{d}\bigg|\frac{w_i}{\sqrt{n}}\bigg|^{r_t}\bigg)\sum_{i_3,\ldots,i_t=1}^{d}|w_{i_3}\cdots w_{i_{t}}|\nonumber\\
&\leq \frac{A_td^{t-3}}{(t-2)!}\sum_{i=1}^{d}\big(|w_i|^{t-2}+d|w_i|^{r_t+t-2}\big).\label{2.12bound}	
\end{align}
We deal with the third order partial derivatives similarly:
\begin{align}
	\bigg|\frac{\partial^3g_{t,j}}{\partial w_{i_1}\partial w_{i_2}\partial w_{i_3}}(\mathbf{w})\bigg|&\leq \frac{1}{(t-3)!}\bigg|\sum_{i_4,\ldots,i_{t}=1}^{d}w_{i_4}\cdots w_{i_{t}}\frac{\partial^tf_j}{\partial w_{i_1}\cdots\partial w_{i_{t}}}\bigg(\frac{\theta\mathbf{w}}{\sqrt{n}}\bigg)\bigg|\nonumber\\
	&\leq \frac{A_t}{(t-3)!}\bigg(1+\sum_{i=1}^{d}\bigg|\frac{w_i}{\sqrt{n}}\bigg|^{r_t}\bigg)\sum_{i_4,\ldots,i_t=1}^{d}|w_{i_4}\cdots w_{i_{t}}|\nonumber\\
	&\leq \frac{A_td^{t-4}}{(t-3)!}\sum_{i=1}^{d}\big(|w_i|^{t-3}+d|w_i|^{r_t+t-3}\big)\label{2.13bound}\\
	&\leq 2C_{1,t}d^{3t-4}\bigg(d+\sum_{i=1}^{d}|w_i|^{u_{1,t}}\bigg),\label{deriv3bound}
\end{align}
where in final step we used the basic inequalities $|w_i|^{t-3}\leq 1+|w_i|^{u_{1,t}}$ and $|w_i|^{r_t+t-3}\leq1+|w_i|^{u_{1,t}}$, since $t-3\leq r_t+t-3\leq u_{1,t}$.
Applying the inequality $(|a_1|+\ldots+|a_q|)^s\leq q^{s-1}(|a_1|^s+\ldots+|a_q|^s)$, $q,s\geq1$, to the bound (\ref{2.11bound}) yields the bound
\begin{align}\bigg|\frac{\partial g_{t,j}}{\partial w_{i_1}}(\mathbf{w})\bigg|^3\leq &\frac{4A_t^3d^{3t-4}}{((t-1)!)^3}\sum_{i=1}^{d}\big(|w_i|^{3t-3}+|w_i|^{3r_t+3t-3}\big)\leq  \frac{8A_t^3d^{3t-4}}{((t-1)!)^3}\bigg(d+\sum_{i=1}^{d}|w_i|^{u_{1,t}}\bigg).\label{cubed}
\end{align}
Similarly, from (\ref{2.12bound}), we get
\begin{align}\bigg|\frac{\partial^2g_{t,j}}{\partial w_{i_1}\partial w_{i_2}}(\mathbf{w})\bigg|^{3/2}&\leq \frac{\sqrt{2}A_t^{3/2}d^{3t/2-{4}}}{((t-2)!)^{3/2}}\sum_{i=1}^{d}\big(|w_i|^{3t/2-3/2}+{d^{3/2}}|w_i|^{3(r_t+t-2)/2}\big)\nonumber\\
&\leq  \frac{2\sqrt{2}A_t^{3/2}d^{3t-4}}{((t-2)!)^{3/2}}\bigg(d+\sum_{i=1}^{d}|w_i|^{u_{1,t}}\bigg).\label{3over2}
\end{align}
From inequalities (\ref{deriv3bound}), (\ref{cubed}) and (\ref{3over2}) we deduce that $\mathbf{g}_{t,n}\in C_P^{3,m}(\mathbb{R}^d)$ for $t\geq3$.


We now consider the case $t=2$. Inequality (\ref{cubed}) still applies for $t=2$, but inequalities (\ref{deriv3bound}) and (\ref{3over2}) were derived under the assumption that $t\geq3$.
We have $\mathbf{g}_{2,n}(\mathbf{w})=n(\mathbf{f}(n^{-1/2}\mathbf{w})-\mathbf{f}(\mathbf{0}))$, and so
\begin{equation*}
	\bigg|\frac{\partial^2 g_{2,j}}{\partial w_{i_1}\partial w_{i_2}}(\mathbf{w})\bigg|=\bigg|\frac{\partial ^2f_j}{\partial w_{i_1}\partial w_{i_2}}\bigg(\frac{\mathbf{w}}{\sqrt{n}}\bigg)\bigg|\leq A_2\bigg(1+\sum_{i=1}^{d}\bigg|\frac{w_i}{\sqrt{n}}\bigg|^{r_2}\bigg)\leq A_2\bigg(1+\sum_{i=1}^{d}|w_i|^{r_2}\bigg),
\end{equation*}
from which we conclude that inequality (\ref{2.12bound}) also holds for $t=2$, which in turn implies that the bound (\ref{3over2}) is valid for $t=2$. We also have the following bound for the third order partial derivatives: 
\begin{align}\label{2third}
	\bigg|\frac{\partial^3 g_{2,j}}{\partial w_{i_1}\partial w_{i_2}\partial w_{i_3}}(\mathbf{w})\bigg|&=\bigg|\frac{1}{\sqrt{n}}\frac{\partial^3 f_{j}}{\partial w_{i_1}\partial w_{i_2}\partial w_{i_3}}\bigg(\frac{\mathbf{w}}{\sqrt{n}}\bigg)\bigg|\leq\frac{A_3}{\sqrt{n}}\bigg(1+\sum_{i=1}^{d}|w_i|^{r_3}\bigg).
\end{align}
 From inequalities (\ref{cubed}), (\ref{3over2}) and (\ref{2third}), and similar considerations to those used for the case $t\geq3$, we conclude that $\mathbf{g}_{2,n}\in C_P^{3,m}(\mathbb{R}^d)$.

Finally, we consider the case $t=1$. 
Similarly to how we showed that inequality (\ref{3over2}) was also valid for $t=2$, one can show that inequality (\ref{cubed}) also holds for $t=1$; we omit the details.
Differentiating $\mathbf{g}_{1,n}(\mathbf{w})=\sqrt{n}(\mathbf{f}(n^{-1/2}\mathbf{w})-\mathbf{f}(\mathbf{0}))$ and using that $\mathbf{f}\in \mathcal{C}_P^{1,2,m}(\mathbb{R}^d)$ leads to the following bounds for the second and third order partial derivatives:
\begin{align}
	\bigg|\frac{\partial^2 g_{1,j}}{\partial w_{i_1}\partial w_{i_2}}(\mathbf{w})\bigg|&=\bigg|\frac{1}{\sqrt{n}}\frac{\partial^2 f_{j}}{\partial w_{i_1}\partial w_{i_2}}\bigg(\frac{\mathbf{w}}{\sqrt{n}}\bigg)\bigg|\nonumber\\
	&\leq\frac{A_2}{\sqrt{n}}\bigg(1+\sum_{i=1}^{d}|w_i|^{r_2}\bigg)\leq\frac{A_2}{n^{1/3}}\bigg(1+\frac{1}{d}\sum_{i=1}^{d}|w_i|^{r_2}\bigg),\label{1two} \\
	\bigg|\frac{\partial^3 g_{1,j}}{\partial w_{i_1}\partial w_{i_2}\partial w_{i_3}}(\mathbf{w})\bigg|&=\bigg|\frac{1}{n}\frac{\partial^3 f_{j}}{\partial w_{i_1}\partial w_{i_2}\partial w_{i_3}}\bigg(\frac{\mathbf{w}}{\sqrt{n}}\bigg)\bigg|\nonumber\\
	&\leq\frac{A_3}{n}\bigg(1+\sum_{i=1}^{d}|w_i|^{r_3}\bigg)\leq\frac{A_3}{n^{5/6}}\bigg(1+\frac{1}{d}\sum_{i=1}^{d}|w_i|^{r_3}\bigg),\label{1three}
\end{align}
where we used the assumption that $n\geq d^6$. From inequalities (\ref{cubed}), (\ref{1two}) and (\ref{1three}), using similar considerations to those used throughout this proof, 
it readily follows that $\mathbf{g}_{1,n}\in C_P^{3,m}(\mathbb{R}^d)$. We have thus finally shown that $\mathbf{g}_{t,n}\in C_P^{3,m}(\mathbb{R}^d)$ for $t\geq1$, and the proof is now complete. \qed

\vspace{2mm}

\noindent{\bf{Verification for the proof of Theorem \ref{thm2.2}.}} As in the verification for the proof of Theorem \ref{thm2.1} we assume that $d/n^{r_t/2}\leq1$ to simplify notation with the general case easily following.	We first consider the case $t\geq6$. Applying the  inequality $\big(|a_1|+\cdots+|a_q|\big)^s\leq q^{s-1}\big(|a_1|^s+\cdots+|a_q|^s\big),$ for $q,s\geq1,$ to the bound (\ref{2.11bound}) yields
\begin{align}
	\bigg|\frac{\partial g_{t,j}}{\partial w_{i_1}}(\mathbf{w})\bigg|^6&\leq\frac{A_t^6}{((t-1)!)^6}d^{6(t-2)}\bigg(\sum_{i=1}^{d}\big(|w_i|^{t-1}+|w_i|^{r_t+t-1}\big)\bigg)^6\nonumber\\
	&\leq\frac{32A_t^6}{((t-1)!)^6}d^{6t-7}\sum_{i=1}^{d}\big(|w_i|^{6(t-1)}+|w_i|^{6(r_t+t-1)}\big)\nonumber\\
	&\leq2{C}_{2,t}d^{6t-7}\bigg(d+\sum_{i=1}^{d}|w_i|^{u_{2,t}}\bigg).\label{2.2dom1bound}
	\end{align}
By applying similar considerations to the bound (\ref{2.12bound}), we obtain
\begin{align}
	\bigg|\frac{\partial^2 g_{t,j}}{\partial w_{i_1}\partial w_{i_2}}(\mathbf{w})\bigg|^3&
	\leq\frac{A_t^3}{((t-2)!)^3}d^{3(t-3)}\bigg(\sum_{i=1}^{d}\big(|w_i|^{t-2}+d|w_i|^{r_t+t-2}\big)\bigg)^3\nonumber\\
	&\leq\frac{4A_t^3}{((t-2)!)^3}d^{3t-7}\sum_{i=1}^{d}\big(|w_i|^{3(t-2)}+d^3|w_i|^{3(r_t+t-2)}\big)\nonumber\\
	&\leq2{C}_{2,t}d^{6t-7}\bigg(d+\sum_{i=1}^{d}|w_i|^{u_{2,t}}\bigg).\label{2.2dom2bound}
\end{align}
Again, applying similar considerations to the bound (\ref{2.13bound}) gives
\begin{align}
		\bigg|\frac{\partial^3 g_{t,j}}{\partial w_{i_1}\partial w_{i_2}\partial w_{i_3}}(\mathbf{w})\bigg|^2&\leq\frac{A_t^2}{((t-3)!)^2}d^{2(t-4)}\bigg(\sum_{i=1}^{d}\big(|w_i|^{t-3}+d|w_i|^{r_t+t-3}\big)\bigg)^2\nonumber\\
		&\leq\frac{2A_t^2}{((t-3)!)^2}d^{2t-7}\bigg(|w_i|^{2(t-3)}+d^2|w_i|^{2(r_t+t-3)}\bigg)\nonumber\\
		&\leq2{C}_{2,t}d^{6t-7}\bigg(d+\sum_{i=1}^{d}|w_i|^{u_{2,t}}\bigg).\label{2.2dom3bound}
\end{align}
We now obtain bounds for the fourth, fifth and sixth order partial derivatives of $g_{t,j}$. By Taylor expanding $\frac{\partial^4\mathbf{f}}{\partial w_{i_1}\cdots\partial w_{i_4}}(n^{-1/2}\mathbf{w})$ about $\mathbf{0}$ and using $\frac{\partial^4\mathbf{f}}{\partial w_{i_1}\cdots\partial w_{i_4}}(\mathbf{0})=\cdots=\frac{\partial^{t-1}\mathbf{f}}{\prod_{j=1}^{t-1}\partial w_{i_j}}(\mathbf{0})=\mathbf{0}$ for all $1\leq i_1,\ldots, i_{t-1}\leq d$ we obtain that, for $t\geq5,$
\begin{align}
	\bigg|\frac{\partial^4 g_{t,j}}{\partial w_{i_1}\cdots\partial w_{i_4}}(\mathbf{w})\bigg|&\leq\frac{1}{(t-4)!}\sum_{i_5,\ldots,i_t=1}^{d}\bigg|w_{i_5}\cdots w_{i_t}\frac{\partial^tf_j}{\partial w_{i_1}\cdots\partial w_{i_t}}\bigg(\frac{\theta\mathbf{w}}{\sqrt{n}}\bigg)\bigg|\nonumber\\
	&\leq\frac{A_t}{(t-4)!}\bigg(1+\sum_{i=1}^{d}\bigg|\frac{w_i}{\sqrt{n}}\bigg|^{r_t}\bigg)\sum_{i_5,\ldots,i_t=1}^{d}|w_{i_5}\cdots w_{i_t}|\nonumber\\
	&\leq\frac{A_t}{(t-4)!}d^{t-5}\sum_{i=1}^{d}\big(|w_i|^{t-4}+d|w_i|^{r_t+t-4}\big).\label{2.24bound}
\end{align}
Note that inequality (\ref{2.24bound}) is also valid for $t=4$; this can be seen using a similar argument to one used in the proof of Theorem \ref{thm2.1} to show that inequality (\ref{3over2}) was valid for $t=2$.
 Proceeding similarly, we obtain, for $t\geq6$,
\begin{align}
	\bigg|\frac{\partial^5 g_{t,j}}{\partial w_{i_1}\cdots\partial w_{i_5}}(\mathbf{w})\bigg|
	&\leq\frac{A_t}{(t-5)!}d^{t-6}\sum_{i=1}^{d}\big(|w_i|^{t-5}+d|w_i|^{r_t+t-5}\big).\label{2.25bound}
\end{align}
We deal with the sixth order partial derivatives similarly:
\begin{align}
	\bigg|\frac{\partial^6 g_{t,j}}{\partial w_{i_1}\cdots\partial w_{i_6}}(\mathbf{w})\bigg|&\leq\frac{1}{(t-6)!}\sum_{i_7,\ldots,i_t=1}^{d}\bigg|w_{i_7}\cdots w_{i_t}\frac{\partial^tf_j}{\partial w_{i_1}\cdots\partial w_{i_t}}\bigg(\frac{\theta\mathbf{w}}{\sqrt{n}}\bigg)\bigg|\nonumber\\
	&\leq\frac{A_t}{(t-6)!}\bigg(1+\sum_{i=1}^{d}\bigg|\frac{w_i}{\sqrt{n}}\bigg|^{r_t}\bigg)\sum_{i_7,\ldots,i_t=1}^{d}|w_{i_7}\cdots w_{i_t}|\nonumber\\
	&\leq\frac{A_t}{(t-6)!}d^{t-7}\sum_{i=1}^{d}\big(|w_i|^{t-6}+d|w_i|^{r_t+t-6}\big)\nonumber\\
	&\leq2{C}_{2,t}d^{6t-7}\bigg(d+\sum_{i=1}^{d}|w_i|^{u_{2,t}}\bigg),\label{2.2dom6bound}
\end{align}
for some $\theta\in(0,1)$, again noting that the bound (\ref{2.2dom6bound}) is valid for $t\geq6$. Applying the basic inequality $\big(|a_1|+\cdots+|a_q|\big)^s\leq q^{s-1}\big(|a_1|^s+\cdots+|a_q|^s\big),$ for $q,s\geq1,$ to inequality (\ref{2.24bound}) yields the bound
\begin{align}
		\bigg|\frac{\partial^4 g_{t,j}}{\partial w_{i_1}\cdots\partial w_{i_4}}(\mathbf{w})\bigg|^{3/2}&\leq\frac{\sqrt{2}A_t^{3/2}}{((t-4)!)^{3/2}}d^{3t/2-7}\sum_{i=1}^{d}\big(|w_i|^{3(t-4)/2}+d^{3/2}|w_i|^{3(r_t+t-4)/2}\big)\nonumber\\
		&\leq2{C}_{2,t}d^{6t-7}\bigg(d+\sum_{i=1}^{d}|w_i|^{u_{2,t}}\bigg).\label{2.2dom4bound}
\end{align}
Similarly, from (\ref{2.25bound}), we have
\begin{align}
	\bigg|\frac{\partial^5 g_{t,j}}{\partial w_{i_1}\cdots\partial w_{i_5}}(\mathbf{w})\bigg|^{6/5}&\leq\frac{2^{1/5}A_t^{6/5}}{((t-5)!)^{6/5}}d^{6t/5-7}\sum_{i=1}^{d}\big(|w_i|^{6(t-5)/5}+d^{6/5}|w_i|^{6(r_t+t-5)/5}\big)\nonumber\\
	&\leq2{C}_{2,t}d^{6t-7}\bigg(d+\sum_{i=1}^{d}|w_i|^{u_{2,t}}\bigg).\label{2.2dom5bound}
\end{align}
From (\ref{2.2dom1bound}), (\ref{2.2dom2bound}), (\ref{2.2dom3bound}), (\ref{2.2dom6bound}), (\ref{2.2dom4bound}) and (\ref{2.2dom5bound}), we deduce that $\mathbf{g}_{t,n}\in C_p^{6,m}(\mathbb{R}^d),$ for $t\geq6$.

We now consider the case $t=4$. The inequalities (\ref{2.2dom1bound}), (\ref{2.2dom2bound}), (\ref{2.2dom3bound}) and (\ref{2.2dom4bound}) all hold for $t=4$, but inequalities (\ref{2.2dom6bound}) and (\ref{2.2dom5bound}) were derived under the assumption that $t\geq6$. We have $\mathbf{g}_{4,n}(\mathbf{w})=n^2(\mathbf{f}(n^{-1/2}\mathbf{w})-\mathbf{f}(\mathbf{0}))$, and so 
\begin{align*}
	\bigg|\frac{\partial^5 g_{4,j}}{\partial w_{i_1}\cdots\partial w_{i_5}}(\mathbf{w})\bigg|\leq\bigg|\frac{1}{\sqrt{n}}\frac{\partial^5 f_{j}}{\partial w_{i_1}\cdots\partial w_{i_5}}\bigg(\frac{\mathbf{w}}{\sqrt{n}}\bigg)\bigg|\leq \frac{A_5}{\sqrt{n}}\bigg(1+\sum_{i=1}^{d}|w_i|^{r_5}\bigg).
\end{align*}
Then we have that
\begin{align}
	\bigg|\frac{\partial^5 g_{4,j}}{\partial w_{i_1}\cdots\partial w_{i_5}}(\mathbf{w})\bigg|^{6/5}&\leq\bigg|\frac{1}{\sqrt{n}}\frac{\partial^5 f_{j}}{\partial w_{i_1}\cdots\partial w_{i_5}}\bigg(\frac{\mathbf{w}}{\sqrt{n}}\bigg)\bigg|^{6/5}\leq\frac{A_5^{6/5}}{n^{3/5}}\bigg(1+\sum_{i=1}^{d}|w_i|^{r_5}\bigg)^{6/5}\nonumber\\
	&\leq\frac{2^{1/5}A_5^{6/5}}{n^{3/5}}d^{1/5}\bigg(1+\sum_{i=1}^{d}|w_i|^{6r_5/5}\bigg)\nonumber\\
	&\leq2{C}_{2,4}d^{17}\bigg(d+\sum_{i=1}^{d}|w_i|^{u_{2,4}}\bigg).\label{4dom5bound}
\end{align}
We also have the following bound for the sixth order partial derivatives:
\begin{align}
	\bigg|\frac{\partial^6 g_{4,j}}{\partial w_{i_1}\cdots\partial w_{i_6}}(\mathbf{w})\bigg|&\leq\bigg|\frac{1}{n}\frac{\partial^6 f_{j}}{\partial w_{i_1}\cdots\partial w_{i_6}}\bigg(\frac{\mathbf{w}}{\sqrt{n}}\bigg)\bigg|\leq\frac{A_6}{n}\bigg(1+\sum_{i=1}^{d}|w_i|^{r_6}\bigg)\nonumber\\
	&\leq2{C}_{2,4}d^{17}\bigg(d+\sum_{i=1}^{d}|w_i|^{u_{2,4}}\bigg).\label{4dom6bound}
\end{align}
From inequalities (\ref{2.2dom1bound}), (\ref{2.2dom2bound}), (\ref{2.2dom3bound}), (\ref{2.2dom4bound}), (\ref{4dom5bound}) and (\ref{4dom6bound}), we conclude that $\mathbf{g}_{4,n}\in C_P^{6,m}(\mathbb{R}^d).$

Finally, we consider the case $t=2$. Inequality (\ref{2.2dom1bound}) still holds for $t=2$. It was shown in the proof of Theorem \ref{thm2.1} that (\ref{2.12bound}) holds for $t=2$, and this then implies that (\ref{2.2dom2bound}) also holds for $t=2$. Differentiating $\mathbf{g}_{2,n}(\mathbf{w})=n(\mathbf{f}(n^{-1/2}\mathbf{w})-\mathbf{f}(\mathbf{0}))$ and using that $\mathbf{f}\in \mathcal{C}^{2,4,m}_P(\mathbb{R}^d)$ leads to the following bounds for the fourth, fifth and sixth order partial derivatives: 
\begin{align}
	\bigg|\frac{\partial^4g_{2,j}}{\partial w_{i_1}\cdots\partial w_{i_4}}(\mathbf{w})\bigg|&=\bigg|\frac{1}{n}\frac{\partial^4f_{j}}{\partial w_{i_1}\cdots\partial w_{i_4}}\bigg(\frac{\mathbf{w}}{\sqrt{n}}\bigg)\bigg|\leq\frac{A_4}{n}\bigg(1+\sum_{i=1}^{d}|w_i|^{r_4}\bigg),\label{2four}\\
	\bigg|\frac{\partial^5g_{2,j}}{\partial w_{i_1}\cdots\partial w_{i_5}}(\mathbf{w})\bigg|&=\bigg|\frac{1}{n^{3/2}}\frac{\partial^5f_{j}}{\partial w_{i_1}\cdots\partial w_{i_5}}\bigg(\frac{\mathbf{w}}{\sqrt{n}}\bigg)\bigg|\leq\frac{A_5}{n^{3/2}}\bigg(1+\sum_{i=1}^{d}|w_i|^{r_5}\bigg)\label{2five},\\
	\bigg|\frac{\partial^6g_{2,j}}{\partial w_{i_1}\cdots\partial w_{i_6}}(\mathbf{w})\bigg|&=\bigg|\frac{1}{n^2}\frac{\partial^6f_{j}}{\partial w_{i_1}\cdots\partial w_{i_6}}\bigg(\frac{\mathbf{w}}{\sqrt{n}}\bigg)\bigg|\leq\frac{A_6}{n^2}\bigg(1+\sum_{i=1}^{d}|w_i|^{r_6}\bigg).\label{2six}
\end{align}
From inequalities (\ref{2.2dom1bound}) and (\ref{2.2dom2bound}) and applying similar considerations to those used throughout this proof to the inequalities (\ref{2third}), (\ref{2four}), (\ref{2five}) and (\ref{2six}), it readily follows that $\mathbf{g}_{2,n}\in C_P^{6,m}(\mathbb{R}^d)$. We have thus finally shown that $\mathbf{g}_{t,n}\in C_P^{6,m}(\mathbb{R}^d)$ for $t\geq2$.\qed

\vspace{2mm}

\noindent{\bf{Verification for the proof of Theorem \ref{thm2.3}.}}	Again, we assume that $d/n^{r_t/2}\leq1$ to simplify notation with the general case easily following. We first consider the case $t\geq4$. Applying the inequality $\big(|a_1|+\cdots+|a_q|\big)^s\leq q^{s-1}\big(|a_1|^s+\cdots+|a_q|^s\big),$ for $q,s\geq1$, to the bound (\ref{2.11bound}) yields 
\begin{align}
\bigg|\frac{\partial g_{t,j}}{\partial w_{i_1}}(\mathbf{w})\bigg|^4&\leq\frac{A_t^4}{((t-1)!)^4}d^{4(t-2)}\bigg(\sum_{i=1}^{d}\big(|w_i|^{t-1}+|w_i|^{r_t+t-1}\big)\bigg)^4\nonumber\\
&\leq\frac{8A_t^4}{((t-1)!)^4}d^{4t-5}\sum_{i=1}^{d}\big(|w_i|^{4(t-1)}+|w_i|^{4(r_t+t-1)}\big)\nonumber\\
&\leq2{C}_{3,t}d^{4t-5}\bigg(d+\sum_{i=1}^{d}|w_i|^{u_{3,t}}\bigg).\label{2.3dom1bound}
\end{align}
By applying similar considerations to the bounds (\ref{2.12bound}), (\ref{2.13bound}) and (\ref{2.24bound}), we obtain
\begin{align}
	\bigg|\frac{\partial^2 g_{t,j}}{\partial w_{i_1}\partial w_{i_2}}(\mathbf{w})\bigg|^2&\leq\frac{A_t^2}{((t-2)!)^2}d^{2(t-3)}\bigg(\sum_{i=1}^{d}\big(|w_i|^{t-2}+d|w_i|^{r_t+t-2}\big)\bigg)^2\nonumber\\
	&\leq\frac{2A_t^2}{((t-2)!)^2}d^{2t-5}\sum_{i=1}^{d}\big(|w_i|^{2(t-2)}+d^2|w_i|^{2(r_t+t-2)}\big)\nonumber\\
	&\leq2{C}_{3,t}d^{4t-5}\bigg(d+\sum_{i=1}^{d}|w_i|^{u_{3,t}}\bigg),\label{2.3dom2bound}\\
	\bigg|\frac{\partial^3 g_{t,j}}{\partial w_{i_1}\partial w_{i_2}\partial w_{i_3}}(\mathbf{w})\bigg|^{4/3}&\leq\frac{A_t^{4/3}}{((t-3)!)^{4/3}}d^{4(t-4)/3}\bigg(\sum_{i=1}^{d}\big(|w_i|^{t-3}+d|w_i|^{r_t+t-3}\big)\bigg)^{4/3}\nonumber\\
	&\leq\frac{2^{1/3}A_t^{4/3}}{((t-3)!)^{4/3}}d^{4t/3-5}\sum_{i=1}^{d}\big(|w_i|^{4(t-3)/3}+d^{4/3}|w_i|^{4(r_t+t-3)/3}\big)\nonumber\\
	&\leq2{C}_{3,t}d^{4t-5}\bigg(d+\sum_{i=1}^{d}|w_i|^{u_{3,t}}\bigg)\label{2.3dom3bound},\\
	\bigg|\frac{\partial^4 g_{4,j}}{\partial w_{i_1}\cdots\partial w_{i_4}}(\mathbf{w})\bigg|&\leq\frac{A_t}{(t-4)!}d^{t-5}\sum_{i=1}^{d}\big(|w_i|^{t-4}+d|w_i|^{r_t+t-4}\big)\nonumber\\
	&\leq2{C}_{3,t}d^{4t-5}\bigg(d+\sum_{i=1}^{d}|w_i|^{u_{3,t}}\bigg).\label{2.3dom4bound}
	\end{align}
From (\ref{2.3dom1bound}), (\ref{2.3dom2bound}), (\ref{2.3dom3bound}) and (\ref{2.3dom4bound}), we deduce that $\mathbf{g}_{t,n}\in C_P^{4,m}(\mathbb{R}^d)$ for $t\geq4$. 

We now consider the case $t=2$. Inequality (\ref{2.3dom1bound}) holds for $t=2$, but inequalities (\ref{2.3dom2bound}), (\ref{2.3dom3bound}) and (\ref{2.3dom4bound}) were derived under the assumption that $t\geq4$. It was shown in the proof of Theorem \ref{thm2.1} that (\ref{2.12bound}) is valid for $t=2$, which in turn implies that the bound (\ref{2.3dom2bound}) is valid for $t=2$. Differentiating $\mathbf{g}_{2,n}(\mathbf{w})=n\big(\mathbf{f}(n^{-1/2}\mathbf{w})-\mathbf{f}(\mathbf{0})\big),$ we have 
\begin{align*}
	\bigg|\frac{\partial^3 g_{t,j}}{\partial w_{i_1}\partial w_{i_2}\partial w_{i_3}}(\mathbf{w})\bigg|=\bigg|\frac{1}{\sqrt{n}}\frac{\partial^3 f_{j}}{\partial w_{i_1}\partial w_{i_2}\partial w_{i_3}}\bigg(\frac{\mathbf{w}}{\sqrt{n}}\bigg)\bigg|\leq\frac{A_3}{\sqrt{n}}\bigg(1+\sum_{i=1}^{d}|w_i|^{r_3}\bigg),
\end{align*}
which then gives
\begin{align}
	\bigg|\frac{\partial^3 g_{t,j}}{\partial w_{i_1}\partial w_{i_2}\partial w_{i_3}}(\mathbf{w})\bigg|^{4/3}&\leq\frac{A_3^{4/3}}{n^{2/3}}\bigg(1+\sum_{i=1}^{d}|w_i|^{r_3}\bigg)^{4/3}\leq\frac{2^{1/3}A_3^{4/3}}{n^{2/3}}\bigg(1+d^{1/3}\sum_{i=1}^{d}|w_i|^{4r_3/3}\bigg)\nonumber\\
	&\leq 2{C}_{3,2}d^3\bigg(d+\sum_{i=1}^{d}|w_i|^{u_{3,2}}\bigg).\label{2.32dom3bound}\end{align}
We also have the following bound for the fourth order partial derivatives:
\begin{align}
		\bigg|\frac{\partial^4 g_{4,j}}{\partial w_{i_1}\cdots\partial w_{i_4}}(\mathbf{w})\bigg|&=\bigg|\frac{1}{n}\frac{\partial^4 f_{j}}{\partial w_{i_1}\cdots\partial w_{i_4}}(\mathbf{w})\bigg|\leq\frac{A_4}{n}\bigg(1+\sum_{i=1}^{d}|w_i|^{r_4}\bigg)\nonumber\\
		&\leq2{C}_{3,2}d^3\bigg(d+\sum_{i=1}^{d}|w_i|^{u_{3,2}}\bigg).\label{2.32dom4bound}
\end{align}
From inequalities (\ref{2.3dom1bound}), (\ref{2.3dom2bound}), (\ref{2.32dom3bound}) and (\ref{2.32dom4bound}), we have that $\mathbf{g}_{2,n}\in C_p^{4,m}(\mathbb{R}^d)$. We have thus finally shown that $\mathbf{g}_{t,n}\in C_p^{4,m}(\mathbb{R}^d)$ for $t\geq2$. \qed

\vspace{2mm}

\noindent{\bf{Verification for the proof of Theorem \ref{thm2.1univ}.}}	  By first Taylor expanding $f(n^{-1/2}w)$ about 0 and using that $f'(0)=\cdots=f^{(t-2)}(0)=0$ (in the case $t\geq2$), and then differentiating, we obtain 
\begin{equation}
	g'_{t,n}(w)=\frac{w^{t-1}}{(t-1)!}f^{(t)}\bigg(\frac{\theta w}{\sqrt{n}}\bigg)\nonumber,
\end{equation}
where $\theta\in(0,1)$.
Since $f\in\mathcal{C}_P^{t,1,1}(\mathbb{R}),$ we obtain the bound 
\begin{equation}
	|g'_{t,n}(w)|\leq\frac{A_t}{(t-1)!}\bigg(|w|^{t-1}+\frac{1}{n^{r_t/2}}|w|^{r_t+t-1}\bigg)\leq \frac{2A_t}{(t-1)!}(1+|w|^{r_t+t-1}).\label{2.4g'bound}
\end{equation}
Inequality (\ref{2.4g'bound}) was derived under the assumption that $t\geq2$, but we can show that it is also valid for $t=1$. Differentiating $g_{1,n}(w)=\sqrt{n}(f(n^{-1/2}w)-f(0))$ gives that 
\begin{equation}
	|g'_{1,n}(w)|=|f'(n^{-1/2}w)|\leq A_1\bigg(1+\frac{1}{n^{r_1/2}}|w|^{r_1}\bigg)\leq 2A_1(1+|w|^{r_1})\nonumber,
\end{equation}
and so (\ref{2.4g'bound}) is valid for $t\geq1$. Thus, $g_{t,n}\in C_P^{1,1}(\mathbb{R})$, as required.
 \qed

\vspace{2mm}

\noindent{\bf{Verification for the proof of Theorem \ref{thm2.2univ}.}}	Applying the inequality $(a+b)^2\leq 2(a^2+b^2)$ to (\ref{2.4g'bound}) and our usual considerations yields
\begin{align}
	|g'_{t,n}(w)|^2
\leq\frac{2A_t^2}{((t-1)!)^2}(|w|^{2(t-1)}+|w|^{2(r_t+t-1)})\leq\frac{4A_t^2}{((t-1)!)^2}(1+|w|^{2(r_t+t-1)})\label{2.5g'bound}.
\end{align}
It now remains to bound the second derivative of $g_{t,n}$. By first Taylor expanding $f(n^{-1/2}w)$ about 0 and using that $f'(0)=\cdots=f^{(t-3)}(0)=0$, and then differentiating, we obtain that, for $t\geq3$,
\begin{equation}
	g''_{t,n}(w)=\frac{w^{t-2}}{(t-2)!}f^{(t)}\bigg(\frac{\theta w}{\sqrt{n}}\bigg).\nonumber
\end{equation}
Since $f\in\mathcal{C}_P^{t,2,1}(\mathbb{R}),$ we obtain the bound
\begin{equation}
	|g''_{t,n}(w)|\leq\frac{A_t}{(t-2)!}\bigg(|w|^{t-2}+\frac{1}{n^{r_t/2}}|w|^{r_t+t-2}\bigg)\label{2.5g''star}.
\end{equation}
By our usual arguments, it can be seen that inequality (\ref{2.5g''star}) is also valid for $t=2$. Applying the basic inequalities $|w|^{t-2}\leq 1+|w|^{2(r_t+t-1)}$ and $|w|^{r_t+t-2}\leq 1+|w|^{2(r_t+t-1)}$ to (\ref{2.5g''star}) yields
\begin{equation}
	|g''_{t,n}(w)|\leq \frac{2A_t}{(t-2)!}(1+|w|^{2(r_t+t-1)})\label{2.5g''bound}.
\end{equation}
From inequalities (\ref{2.5g'bound}) and (\ref{2.5g''bound}), we deduce that $g_{t,n}\in C_P^{2,1}(\mathbb{R})$ for $t\geq2$. \qed

	\end{document}